\documentclass[reqno, 10pt]{amsart}
\usepackage[margin=1in]{geometry}

\usepackage{amsmath,amssymb,amscd,graphicx, color, tensor,hyperref,tikz,stackengine}
\usepackage{setspace}
\usepackage{cancel}

\newcommand\oast{\stackMath\mathbin{\stackinset{c}{0ex}{c}{0ex}{\ast}{\bigcirc}}}
\newcommand{\R}{{ \mathbb{R}  }}
\newcommand{\N}{{ \mathbb{N}  }} 
\newcommand{\Z}{ {\mathbb{Z}}}

\newcommand{\calP}{{ \mathcal P  }}

\newcommand{\calW}{{\mathcal W}}

\newcommand{\pa}{\partial}

\newcommand{\Om}{{ \Omega  }}

\newcommand{\na}{\nabla}

\newcommand {\ga}{\gamma}
\newcommand {\al}{\alpha}
\newcommand {\be}{\beta}

\newcommand{\del}{\delta}

\newcommand {\brho}{\bar \rho}
\newcommand{\brt}{\brho_t}
\newcommand{\xt }{X_t}
\newcommand{\yt }{Y_t}
\newcommand{\xit }{X^i_t}
\newcommand{\xis }{X^i_s}

\newcommand{\xjt }{X^j_t}
\newcommand{\xjs }{X^j_s}

\newcommand{\bx}{\mathbf{x}}

\newcommand{\by}{\mathbf{y}}

\newcommand{\EE}{\mathbb{E}}

\newtheorem{maintheorem}{Theorem}

\newcommand{\T}{\mathbb{T}}
\newcommand{\lt}{\left}
\newcommand{\rt}{\right}
\newcommand{\bq}{\begin{equation}}
\newcommand{\eq}{\end{equation}}
 \newcommand{\dx}{\textnormal{d}x}
 \newcommand{\Law}{\textnormal{Law}}
\def\d{\mathrm{d}}

\begin{document}

\newtheorem{definition}{Definition}[section]
\newtheorem{lemma}{Lemma}[section]
\newtheorem{proposition}{Proposition}[section]
\newtheorem{theorem}{Theorem}[section]
\newtheorem{assumption}{Assumption}[section]
\newtheorem{cor}{Corollary}[section]
\newtheorem{remark}{Remark}[section]
\numberwithin{equation}{section}
\newenvironment{pfthm1}{{\par\noindent\bf
           Proof of Theorem \ref{GETHM}. }}{\hfill\fbox{}\par\vspace{.2cm}}
\newenvironment{pfthm2}{{\par\noindent\bf
           Proof of Theorem \ref{GETHM1}. }}{\hfill\fbox{}\par\vspace{.2cm}}

\title[Propagation of chaos and bifurcation in two-type adhesion]{Uniform-in-time propagation of chaos and bifurcation in two-type adhesion systems}

\keywords{Stochastic interacting particle systems, diffusion--jump processes, propagation of chaos, long-time behavior,  bifurcation of equilibria.}

\begin{abstract}
We study a nonlocal adhesion model for two interacting tumor cell phenotypes, combining diffusion, pairwise interactions, and random phenotypic switching. The system admits a microscopic diffusion--jump particle description whose mean-field limit is a nonlinear McKean--Vlasov equation on a product space encoding position and internal state. We first establish uniform-in-time propagation of chaos in the weak-interaction regime using a coupling approach that combines reflection coupling for the diffusion with an optimal coupling of the spin-flip dynamics. As a byproduct, we obtain exponential long-time contraction for the nonlinear McKean--Vlasov equation in the first-order Wasserstein distance, implying uniqueness of the stationary distribution. We also investigate the complementary regime of strong interactions, where the homogeneous equilibrium may lose stability through a bifurcation mechanism.
  \end{abstract}

\author{Myeongju Chae}%
\address{School of applied mathematics and computer engineering, Hankyong University, Anseong, 17579, Republic of Korea}%
\email{mchae@hknu.ac.kr}
\author{Young-Pil Choi}
\address{Department of Mathematics, Yonsei University, Seoul, 03722, Republic of Korea}%
\email{ypchoi@yonsei.ac.kr}
  \maketitle
  
  \tableofcontents
%
%
%
%
%
%

\section{Introduction}
  
Models for tumor invasion and cell migration often combine diffusive motion, nonlocal interactions, and reaction mechanisms accounting for phenotypic changes. Such frameworks are designed to capture the collective behavior of heterogeneous cell populations, where spatial organization and internal state transitions evolve on comparable time scales \cite{ACN,BLM,CA,CL,PB}.
 
In the present work, we study a mean-field description of a nonlocal adhesion model involving two interacting cancer cell phenotypes. We consider two populations evolving on the $d$-dimensional flat torus $\T^d=\R^d/\Z^d$, where cells may switch between phenotypic states while interacting through adhesion forces. Let $u=u(x,t)$ and $v=v(x,t)$ denote the spatial densities of the two phenotypes at position $x\in\T^d$ and time $t>0$. The dynamics are governed by the following coupled system:
\begin{align}\label{pde}
\begin{aligned}
\pa_t u & =  \frac{\sigma^2(1)}{2} \Delta u - \na \cdot ( u ( \nabla U * u + \nabla V * v))-\al_1 u +\al_{-1} v, \\
\pa_t v & =  \frac{\sigma^2(-1)}{2} \Delta v - \na \cdot ( v ( \nabla  U * u + \nabla V * v))+ \al_1 u-\al_{-1} v,
\end{aligned}
\end{align}
where $\sigma(1),\sigma(-1)>0$ denote the diffusion coefficients associated with each phenotype, $U$ and $V$ are prescribed interaction potentials, and $\al_1,\al_{-1}\ge0$ are transition rates, not both zero. Throughout the paper, we assume 
\[
\int_{\T^d} u(x,t)\d x + \int_{\T^d} v(x,t) \d x=1, \quad \forall \, t \ge 0,
\]
since the system \eqref{pde} preserves the total mass. 

System \eqref{pde} provides a diffusive nonlocal interaction model for two tumor cell phenotypes, in which spatial motion, adhesion effects, and phenotypic transitions are coupled. Related nonlocal adhesion models posed on bounded domains have been introduced and their well-posedness properties analyzed in, for instance, \cite{ACL,BCE17}. The convolution terms describe cell-to-cell adhesion forces, which are assumed to depend linearly on the surrounding population densities, following standard modeling assumptions in nonlocal adhesion theory \cite{APS06}. We assume that the interaction potentials $U$ and $V$ generate odd interaction forces, namely
\[
\nabla U(-x) = -\nabla U(x), \quad \nabla V(-x) = -\nabla V(x), \quad \forall\,x \in \T^d.
\]
This symmetry reflects the action--reaction principle at the level of pairwise interactions. In particular, the total interaction force is balanced, a property that is natural both from a physical viewpoint and in the context of cell--cell adhesion.

The linear exchange terms model random transitions between the two phenotypic states. In particular, the terms involving $\al_1$ correspond to transitions from the $u$-type to the $v$-type population with constant rate $\al_1>0$, a mechanism commonly associated with epithelial--mesenchymal transition processes in tumor dynamics \cite{Y17}. Conversely, the terms involving $\al_{-1}$ account for the reverse transition with rate $\al_{-1}>0$, which may represent mesenchymal--epithelial transition. Bidirectional phenotypic switching mechanisms of this form also appear in tumor invasion
models, see for instance \cite{KLSW17, KSSSL20}. Due to the mass exchange between the two populations, the individual masses of $u$ and $v$ are not conserved, while the total mass associated with \eqref{pde} remains invariant in time.

A stochastic particle interpretation of \eqref{pde} can be formulated in terms of $N$ interacting agents whose spatial positions evolve diffusively and whose phenotypic states switch randomly in time. This viewpoint provides a natural microscopic description of the coupled diffusion--reaction dynamics encoded in \eqref{pde}. Such a stochastic model was proposed in \cite{ACCL}, where the authors derived \eqref{pde} as the mean-field limit of the corresponding particle system as the number of particles $N$ tends to infinity.

In this framework, each particle is described by a pair $(X^i_t,Y^i_t)$, where $X^i_t\in\T^d$ denotes the spatial position of the $i$-th particle and
$Y^i_t\in\{1,-1\}$ its phenotypic state, for $i=1,\dots,N$ and $t\ge0$. The spatial dynamics are governed by
\bq\label{eq0}
  X^i_t = X^i_0 +\int_0^t \sigma(Y^i_s)\,\d B^i_s + \frac 1N \sum_{j=1}^N \int_0^t F(\xis-\xjs, Y^j_s)\,\d s, 
\eq
where $\{B^i_t\}_{i=1}^N$ are independent standard $d$-dimensional Brownian motions. The interaction kernel $F:\T^d\times\{\pm1\}\to\R^d$ encodes the adhesion force generated by a
neighboring particle and depends on the neighbor's phenotypic state. More precisely, we assume
\[
F(\cdot,1)=\nabla U, \quad F(\cdot,-1)=\nabla V,
\]
so that each particle experiences the superposition of forces generated by surrounding particles of the two phenotypes. In particular, the interaction force acting on a particle does not depend on its own phenotypic state, but on the spatial configuration and the types of its neighbors. This structure is consistent with the macroscopic drift term $\nabla U*u+\nabla V*v$ appearing in \eqref{pde}. Moreover, the oddness of $\nabla U$ and $\nabla V$
implies that $F(0,y)=0$ and enforces an action--reaction symmetry at the microscopic level.

The internal state variable $Y_t^i$ evolves according to an inhomogeneous Poisson-driven jump process,
\bq\label{eq1}
Y_t^i = Y_0^i + \int_0^t ( -1 - Y_{s-}^i )\, \d\tilde E^i_{1}(s) + \int_0^t ( 1 - Y_{s-}^i )\, \d\tilde E^i_{-1}(s),
\eq
where the jump intensities of the counting processes $\tilde E^i_{\pm1}$ are chosen so as
to reproduce the transition rates $\alpha_{\pm 1}$ appearing in \eqref{pde}:
\[
  \tilde E_1^i(t) = E^i \left( \al_1 \int_0^t \chi_1( Y^i_{s-}) \d s \right) \quad \mbox{and} \quad
    \tilde E_{-1}^i(t) = E^i \left( \al_{-1} \int_0^t \chi_{-1}( Y^i_{s-}) \d s \right).
    \]
Here $\{E^i(t)\}_{i=1}^N$ are independent unit-rate Poisson processes, and $\chi_k:\{1,-1\}\to\{0,1\}$, $k=\pm1$, is defined by $\chi_k(l)=1$ if $l=k$ and $\chi_k(l)=0$ otherwise. The stochastic integrals in \eqref{eq1} are understood in the standard sense of integration of bounded predictable processes against c\`adl\`ag semimartingales.

Mean-field limits for stochastic systems of interacting particles have been extensively investigated since the seminal work of \cite{Szn91}, which established propagation of chaos for the Vlasov--McKean equation under globally Lipschitz interaction assumptions. Extensions to non-Lipschitz interaction kernels have been studied in \cite{BCC11,CCHS19, CS,CS19,HS19,JW16}, while particle systems with singular interaction forces, both of first and second order, are analyzed in \cite{BJWpre,CCS19,HLP20,JW18,Sa19}. A comprehensive overview of mean-field limits for stochastic interacting particle systems can be found in \cite{CD22, JW17} and the references therein.

Most existing quantitative results in this literature focus on systems whose macroscopic description consists of a single equation, and in which propagation of chaos is typically established only on finite time intervals. By contrast, the system \eqref{pde} involves two interacting populations whose individual masses are not conserved due to the presence of linear exchange terms, although the total population remains invariant. This structural feature suggests that a faithful stochastic description of \eqref{pde} must simultaneously account for spatial motion and random transitions between internal states, and that new analytical tools are required to control these effects over long time horizons.

Stochastic models combining spatial dynamics with type-switching mechanisms have appeared in only a limited number of works. Models without diffusion were considered in \cite{Albi}, while diffusive systems with random type changes were studied in \cite{LLN20}. The Poisson-driven formulation of the internal state dynamics used in \eqref{eq1} is inspired by the stochastic framework introduced in \cite{LLN20}, where the authors study bimolecular reaction--diffusion systems in which particles change chemical type upon random encounters. Earlier contributions include stochastic interacting particle models driven by Poisson processes \cite{Oe}, as well as convergence results for jump Markov processes in the context of ordinary differential equations \cite{Ku}.

%
%
%
%
%
%

\subsection{Main results}

The goal of this paper is to develop a quantitative mean-field theory for a two-type interacting diffusion--jump particle system on the flat torus, and to connect microscopic coupling mechanisms with macroscopic stability properties in Wasserstein distance.  In particular, we establish (i) uniform-in-time propagation of chaos for the $N$-particle dynamics, (ii) exponential long-time contraction for the nonlinear McKean--Vlasov limit in the weak-interaction regime, and (iii) a complementary bifurcation criterion for stationary solutions when the interaction strength is large enough.

To formulate the main results in a precise and quantitative way, we first introduce the main assumptions and the distance framework used throughout the paper. Our analysis is naturally carried out on the product space
\[
\Pi := \T^d \times \{\pm1\},
\]
which encodes both the spatial position and the phenotypic state of particles. We slightly abuse notation and write $|x-w|$ for the distance on the flat torus $\T^d$, defined by
\[
|x-w| := \min_{n\in\Z^d} |x-w+n|.
\]
With this convention, the diameter of $\T^d$ is $\sqrt d/2$. In order to measure spatial discrepancies in a way compatible with the geometry of the torus, we introduce the auxiliary function $f:\R_+ \to [-1,1]$ defined by
\[
f(r)=\sin \left(\frac{\pi}{\sqrt d}\,r\right).
\]
Since $0\le |x|\le \sqrt d/2$ for $x\in\T^d$, the function $f(|x|)$ is strictly increasing in $|x|$ and is equivalent to the distance $|x|$ in the sense that
\[
\frac{2}{\sqrt d}|x|\le f(|x|)\le \frac{\pi}{\sqrt d}|x|.
\]

On the discrete set $\{\pm1\}$, we observe that
\[
|y-\bar y| = 2 \mathbf 1_{\{y\neq \bar y\}}, \quad y,\bar y\in\{\pm1\}.
\]
Thus, the absolute difference naturally induces a metric on the type space which takes the values $\{0,2\}$. 

These component-wise distances are combined to define a cost on the product space $\Pi$ by
\[
\d\big((x,y),(p,q)\big) := |x-p| + |y-q|
= |x-p| + 2\mathbf 1_{\{y\neq q\}}.
\]

We denote by $\calW_1$ the associated $1$-Wasserstein distance on $\calP(\Pi)$.  When restricted to the spatial marginals, $\calW_1$ induces the standard first-order Wasserstein distance on $\T^d$, namely
\[
\calW_1(\nu,\mu)=\inf_{\gamma\in\Pi(\nu,\mu)}\iint_{\T^d\times\T^d}|x-w|\,\d\gamma(x,w),
\]
where $\Pi(\nu,\mu)$ denotes the set of couplings of $\nu$ and $\mu$, that is, the set of probability measures on the product space whose marginals are $\nu$ and $\mu$.

The interaction force $F:\T^d\times\{\pm1\}\to\R^d$ is assumed to satisfy the following Lipschitz-type condition: there exists $\eta>0$ such that for all $x,w\in\T^d$ and $y\in\{\pm1\}$,
\bq\label{hyp_F}
|F(x,y)-F(w,y)| \le \eta  f(|x-w|).
\eq
In particular, since $F(\cdot,y)$ is odd on $\T^d$ for each $y\in\{\pm1\}$, we have $F(0,y)=0$ and hence,
by \eqref{hyp_F},
\[
|F(x,y)| = |F(x,y)-F(0,y)| \le \eta  f(|x|)\le \eta, \quad (x,y)\in \T^d\times\{\pm1\}.
\]

The propagation of chaos and contraction results are formulated in terms of a nonlinear mean-field limit on $\Pi$, which we now briefly describe. Due to conservation of the total mass, we consider probability measures on $\Pi$ and rewrite the macroscopic system \eqref{pde} in the unified form
\bq\label{intro_new_pde}
\pa_t\rho = \frac{\sigma^2(y)}{2}\Delta_x \rho - \na_x\cdot(\rho\, F\oast\rho) + {\rm T}\rho, \quad z=(x,y)\in\Pi.
\eq
Here the nonlocal drift is given by
\begin{align}\label{oast}
\begin{aligned}
(F\oast  \rho) (w)&:=\iint_\Pi F(w-x, y) \rho(x,y)\, \d x \d y\cr
&=   \int_{\T^d} F(w-x, 1) \rho (x, 1)\, \d x + \int_{\T^d} F(w-x, -1)  \rho (x, -1)\, \d x\cr
&=   \int_{\T^d} \nabla U(w-x) \rho (x, 1)\, \d x + \int_{\T^d} \nabla V(w-x)  \rho (x, -1)\, \d x
  \end{aligned} \end{align}
and ${\rm T}$ denotes the linear operator encoding random transitions between the two phenotypic states with rates $\alpha_1$ and $\alpha_{-1}$,
\[
{\rm T}\rho(x,y) = (-1)y \alpha_1 \rho(x,1)  + y  \alpha_{-1} \rho(x,-1).
\]
We denote by $\d z=\d x\otimes\d{}_\#$ the product of Lebesgue measure on $\T^d$ and the counting measure on $\{\pm1\}$.

Writing $\rho(x,1,t)=u(x,t)$ and $\rho(x,-1,t)=v(x,t)$, the equation above is equivalent to the original system \eqref{pde}. Equivalently, it can be interpreted as the evolution equation for the law of a nonlinear McKean--Vlasov process on $\Pi$, which provides the natural mean-field limit of the interacting particle system.

We now state the main quantitative conclusions of this work in an informal form, highlighting the structure of the estimates while suppressing non-essential constants and technical conditions. Precise statements, including sharp decay rates and explicit parameter restrictions, are given in Theorems \ref{thm_propa}, \ref{thm_lt}, and \ref{bifurcation} below.

Our first result concerns a quantitative and uniform-in-time mean-field theory for the two-type diffusion--jump particle system associated with \eqref{pde}. For the same microscopic model, propagation of chaos was obtained in \cite{ACCL} by means of a relative entropy method, but only on arbitrary finite time intervals. In contrast, we adopt a coupling-based approach that combines reflection coupling for the diffusion component with an optimal coupling of the type-switching dynamics. More precisely, we use two equivalent microscopic representations of the internal state process: a Poisson-driven jump SDE and the corresponding spin-flip Markov chain. A spin-flip formulation of the internal state dynamics was also used in \cite{Albi}. Since both representations are governed by the same flip rates, they share the same generator and therefore define the same c\`{a}dl\`{a}g Markov jump process in distribution. This flexibility allows us to tailor the coupling strategy to the structure of each component of the dynamics. The use of reflection coupling for the diffusion part aligns naturally with the Wasserstein structure introduced above, which simultaneously captures spatial separation and type disagreement. Reflection coupling was introduced in \cite{Eb} to establish exponential convergence for linear drift--diffusion equations and was later employed in \cite{DAGR} to obtain uniform-in-time propagation of chaos for the Vlasov--McKean equation. Inspired by these ideas, we extend the reflection coupling methodology to interacting particle systems with random type changes by coupling the spin-flip dynamics optimally at the microscopic level. This combined coupling strategy allows us to control the particle system uniformly in time and leads to the propagation of chaos estimates in Wasserstein distance that are valid over infinite time horizons. To the best of our knowledge, this is the first uniform-in-time propagation of chaos result for interacting particle systems involving both diffusion and random-type switching.
 
  For a probability measure $\mu\in\calP(\Pi)$, we denote by $\pi_x{}_\#\mu$ and $\pi_y{}_\#\mu$ its spatial and type marginals. 
 
\begin{maintheorem}[Uniform-in-time propagation of chaos]
Let $\mu^N$ be the empirical measure associated to the interacting diffusion--jump system \eqref{eq0}--\eqref{eq1}, that is,
\[
\mu_t^N := \frac1N \sum_{i=1}^N \delta_{(X_t^i,Y_t^i)} \in  \calP(\Pi),
\]
where $(X_t^i,Y_t^i)$ is a solution of \eqref{eq0}--\eqref{eq1}. Let $\bar\rho$ be a solution of \eqref{intro_new_pde}.  Assume that $F$ satisfies \eqref{hyp_F} and the interaction strength $\eta$ is sufficiently small. Then there exist constants $c>0$ and $C>0$, independent of $t$ and $N$, such that for all $t\ge0$,
\begin{align}\label{eq:intro_poc_x}
\begin{aligned}
\EE \calW_1(\pi_x{}_\#\mu_t^N,\pi_x{}_\#\bar\rho_t ) &\le C e^{-c t} \EE \calW_1 (\pi_x{}_\#\mu_0^N,\pi_x{}_\#\bar\rho_0)
+\frac{C}{\sqrt{N}} (1 + {\bf 1}_{\{d=2\}} \log N )   \cr
&\quad +\mathcal R_N(t)\EE \calW_1 (\pi_y{}_\#\mu_0^N,\pi_y{}_\#\bar\rho_0 ) 
\end{aligned}
\end{align}
and
\[
\EE\calW_1 (\pi_y{}_\#\mu_t^N,\pi_y{}_\#\bar\rho_t ) \le e^{-(\alpha_1+\alpha_{-1}) t}\EE\calW_1(\pi_y{}_\#\mu_0^N,\pi_y{}_\#\bar\rho_0).
\]
Here  $\mathcal R_N(t)$ is a transient term which decays exponentially fast in $t$. In particular, if $\pi_y{}_\#\mu_0^N=\pi_y{}_\#\bar\rho_0$, then \eqref{eq:intro_poc_x} reduces to the uniform-in-time rate
\[
\EE \calW_1(\pi_x{}_\#\mu_t^N,\pi_x{}_\#\bar\rho_t ) \le C e^{-c t} \EE \calW_1 (\pi_x{}_\#\mu_0^N,\pi_x{}_\#\bar\rho_0)
+\frac{C}{\sqrt{N}} (1 + {\bf 1}_{\{d=2\}} \log N ).
\]
\end{maintheorem}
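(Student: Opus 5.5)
We will argue by synchronous-in-index coupling: run $N$ independent copies $(\bar X^i_t,\bar Y^i_t)$ of the nonlinear McKean--Vlasov process with law $\brho_t$ alongside the particle system, and couple each pair $(X^i,Y^i)$, $(\bar X^i,\bar Y^i)$ through a reflection coupling in space and an optimal (maximal) coupling of the spin flips. Then the triangle inequality gives
\[
\EE\calW_1(\mu^N_t,\brho_t)\le \frac1N\sum_i\EE\,\d\big((X^i_t,Y^i_t),(\bar X^i_t,\bar Y^i_t)\big)+\EE\calW_1(\bar\mu^N_t,\brho_t),
\]
where $\bar\mu^N_t$ is the empirical measure of the i.i.d.\ copies. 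The last term is bounded by the standard Fournier--Guillin type estimate for empirical measures on a compact set, uniformly in $t$. This is where the rate $C N^{-1/2}(1+{\bf 1}_{\{d=2\}}\log N)$ comes from; for $d\ge3$ we would apply the estimate componentwise to the marginals $\pi_x{}_\#$, or else accept a worse rate.

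\textbf{Type component.} We represent $Y^i$ and $\bar Y^i$ as spin-flip chains with the same rates $\al_{\pm1}$, independent of positions. We couple them so that they flip together once they agree, and otherwise use the optimal coupling of the jump kernels. Since the flip rates do not depend on the interaction, $\EE\,\mathbf 1_{\{Y^i_t\ne\bar Y^i_t\}}$ satisfies
\[
\frac{d}{dt}\,\EE\,\mathbf 1_{\{Y^i_t\ne\bar Y^i_t\}}\le-(\al_1+\al_{-1})\,\EE\,\mathbf 1_{\{Y^i_t\ne\bar Y^i_t\}}.
\]
The disagreement is a two-state chain that is absorbed at $0$ with rate $\al_1+\al_{-1}$. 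Choosing the initial coupling optimal for the type marginals gives the second inequality of the statement, because the type law of the particle system evolves by the same linear two-state master equation as $\pi_y{}_\#\brho_t$.

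\textbf{Space component.} Set $r_t=|X^i_t-\bar X^i_t|$. When $Y^i_t=\bar Y^i_t$, the two processes have the same diffusion coefficient $\sigma(Y^i_t)$, and we use reflection coupling of $B^i$ and $\bar B^i$ until $r_t=0$, synchronous afterwards. When the types differ, we use synchronous (or independent) coupling; this produces an error proportional to $\mathbf 1_{\{Y^i\ne\bar Y^i\}}$, which decays like $e^{-(\al_1+\al_{-1})t}$ and gives the transient $\mathcal R_N(t)$. Applying Itô to $f(r_t)$ with $f(r)=\sin(\pi r/\sqrt d)$, reflection contributes $2\sigma^2 f''(r)=-2\sigma^2(\pi^2/d)f(r)$, so the Itô drift is bounded above by
\[
-\kappa f(r_t)+\big|\text{drift difference}\big|,\qquad \kappa=2\pi^2\min\sigma^2/d,
\]
using $|f'|\le\pi/\sqrt d$. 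One subtlety is the torus cut locus at $r=\sqrt d/2$: there $f'(\sqrt d/2)=0$ and $f$ is concave, so the nonsmooth point only helps, and we handle it via Itô--Tanaka. The drift difference splits into three pieces:
\[
\frac1N\sum_j\big[F(X^i-X^j,Y^j)-F(\bar X^i-\bar X^j,\bar Y^j)\big]+\Big[\frac1N\sum_jF(\bar X^i-\bar X^j,\bar Y^j)-F\oast\brho(\bar X^i)\Big].
\]
The first bracket we further split according to whether $Y^j=\bar Y^j$. When the types agree, \eqref{hyp_F} bounds it by $\eta\big(f(r^i)+f(r^j)\big)$. When the types differ, we bound it by $2\eta\,\mathbf 1_{\{Y^j\ne\bar Y^j\}}$. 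The second bracket is a sum of centered i.i.d.\ terms, conditionally on $\bar X^i$ (the $j=i$ term vanishes since $F(0,\cdot)=0$), so its expectation is $O(\eta/\sqrt N)$ by a variance bound.

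\textbf{Closing the estimate.} Averaging over $i$, $h(t)=\frac1N\sum_i\EE f(r^i_t)$ satisfies
\[
h'\le-(\kappa-2\eta')h+C\eta\, e^{-(\al_1+\al_{-1})t}\,\EE\calW_1(\pi_y{}_\#\mu_0^N,\pi_y{}_\#\brho_0)+C\eta N^{-1/2}.
\]
For $\eta$ small we have $c=\kappa-2\eta'>0$, and Grönwall gives the claimed bound, using $f\simeq|\cdot|$ to return to $\calW_1$. The initial coupling is taken optimal for the spatial marginals.

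The main obstacle I expect is the type-mismatch periods. Reflection coupling is unavailable when $\sigma(Y^i)\ne\sigma(\bar Y^i)$, and one must make sure the contraction is not destroyed by the Brownian terms with different coefficients. I would first try synchronous coupling there and bound the Itô correction of $f$ by $C|\sigma(1)-\sigma(-1)|^2\mathbf 1_{\{Y^i\ne\bar Y^i\}}$. This is affordable because $f''\le0$ and that indicator has exponentially decaying expectation, so it folds into $\mathcal R_N(t)$.
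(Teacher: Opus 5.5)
The overall architecture matches the paper's: i.i.d.\ McKean--Vlasov copies, Fournier--Guillin for $\bar\mu^N_t$, a spin coupling that merges at rate $\alpha_1+\alpha_{-1}$, reflection in space, and Sznitman's variance bound. Your split of the interaction term, with $\bar Y^j$ in the centred sum and the mismatch bounded by $2\eta\mathbf 1_{\{Y^j\neq\bar Y^j\}}$, is cleaner than the paper's, which puts $Y^j$ in the centred sum and tacitly needs $(\bar X^j_t,Y^j_t)\sim\brho_t$.

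\textbf{Main gap: the type-mismatch It\^o correction.} The step you flag as the main obstacle fails as proposed. Suppose $Y^i_t\neq\bar Y^i_t$. For \emph{any} coupling of the Brownian motions, the noise of ${\rm D}^i_t$ has covariance at least $(\sigma(1)-\sigma(-1))^2$ in every tangential direction, because a cross-correlation of norm at most $1$ cannot reconcile two different scalings on the $(d-1)$-dimensional tangent space. So It\^o's formula for $f(|{\rm D}^i_t|)$ contains a drift bounded below by
\[
\frac{d-1}{2}\,\big(\sigma(Y^i_t)-\sigma(\bar Y^i_t)\big)^2\,\frac{f'(|{\rm D}^i_t|)}{|{\rm D}^i_t|},
\]
which blows up as $|{\rm D}^i_t|\to0$ because $f'(0)=\pi/\sqrt d$. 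For $d=1$ the local time at $0$ in It\^o--Tanaka plays the same role. This term comes from the Hessian of the norm, not from $f''$, so concavity of $f$ does not help. Hence the bound $C|\sigma(1)-\sigma(-1)|^2\mathbf 1_{\{Y^i_t\neq\bar Y^i_t\}}$ is false. It fails in exactly the relevant regime: the mismatch interval starts at $t=0$, where an optimal spatial coupling may well give ${\rm D}^i_0=0$.

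The paper's remedy is to apply It\^o to $f(\psi_a(|{\rm D}^i_t|^2))$ with $\psi_a(r)=\sqrt{r+a}$. The singular contributions (its terms ${\rm III}^i_t$, ${\rm IV}^i_t$) are then bounded by $Ca^{-1/2}(\sigma(Y^i_t)-\sigma(\bar Y^i_t))^2$. Their expectation decays like $e^{-(\alpha_1+\alpha_{-1})t}$, at the price of $O(\sqrt a)$ errors elsewhere. Reflection is kept even during mismatch, since the radial quadratic variation $4(\phi_r^\delta)^2\sigma(Y^i_t)\sigma(\bar Y^i_t)\ge 4(\phi_r^\delta)^2\sigma_{\rm min}^2$ still gives contraction. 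Balancing the errors leads to $a\sim N^{-5}$, which is where the prefactor $N^{5/2}$ in $\mathcal R_N(t)$ comes from. The absence of any such blow-up in your final inequality is the symptom of the missing term. An $N$-independent transient would need a genuinely different argument, e.g.\ a Bessel-type bound on $\EE\int_0^{\tau^i}|{\rm D}^i_s|^{-1}\,\d s$ over the mismatch interval, which you would have to supply.

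\textbf{Second gap: what happens after ${\rm D}^i_t$ hits $0$.} ``Reflection until $r_t=0$, synchronous afterwards'' is the recipe for two copies of the \emph{same} SDE. Here the drifts $F\oast\mu^N_t(X^i_t)$ and $F\oast\brho_t(\bar X^i_t)$ differ even when $X^i_t=\bar X^i_t$, so ${\rm D}^i_t$ leaves $0$ at once. Staying synchronous then forfeits contraction for all later times, so nothing uniform in time survives. Switching reflection back on whenever ${\rm D}^i_t\neq0$ is not directly well-posed, because $e^i_t$ is discontinuous at $0$. This is why the paper uses the smooth partition $(\phi_r^\delta)^2+(\phi_s^\delta)^2=1$. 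It pays an error of order $\delta$ from the region $|{\rm D}^i_t|<\delta$ and takes $\delta\sim N^{-1/2}$, which is compatible with $a\sim N^{-5}$ through the term $a^{1/4}\delta^{-3/2}$.

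\textbf{Two smaller points, shared with the paper's own argument.}
\begin{itemize}
\item For the type inequality, your coupling controls $\frac1N\sum_i\EE|Y^i_t-\bar Y^i_t|$. But $\pi_y{}_\#\mu^N_t$ is an empirical measure, and $\EE\calW_1(\pi_y{}_\#\bar\mu^N_t,\pi_y{}_\#\brho_t)$ is of order $N^{-1/2}$ and does not decay, so a term $CN^{-1/2}$ must be added. For instance, if $\alpha_1>0$, all $Y^i_0=1$, and $\pi_y{}_\#\brho_0$ is the point mass at $1$, the right-hand side vanishes while the left-hand side is positive for $t>0$.
\item For $d\ge3$ the componentwise idea does not help. $\calW_1$ on $\T^d$ is not controlled by one-dimensional marginals, and the empirical rate there is genuinely $N^{-1/d}$.
\end{itemize}
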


As a byproduct of the uniform-in-time propagation of chaos established above, we obtain exponential long-time contraction for the associated nonlinear McKean--Vlasov equation in the weak-interaction regime. This result makes precise how macroscopic stability of the mean-field dynamics emerges from the underlying microscopic coupling structure, and in particular yields uniqueness of the stationary law and convergence to equilibrium in the $1$-Wasserstein distance.

\begin{maintheorem}[Long-time exponential contraction] Assume that \eqref{hyp_F} holds and $\eta>0$ sufficiently small. Let $\bar\rho_t$ and $\bar\gamma_t$ be two solutions of \eqref{intro_new_pde} with initial laws $\rho_0,\gamma_0\in \calP(\Pi)$. Then there exist constants $c>0$ and $C\ge1$, independent of $t$, such that for all $t\ge0$,
\[
\calW_1(\bar\rho_t,\bar\gamma_t)\le C \lt(\calW_1(\rho_0,\gamma_0) + \calW_1(\rho_0,\gamma_0)^\frac17 \rt) e^{-c t}.
\]
In particular, the nonlinear dynamics admits a unique stationary distribution $\rho_\infty$, and
\[
\calW_1(\bar\rho_t,\rho_\infty)\le C  \lt( \calW_1(\rho_0,\rho_\infty) +  \calW_1(\rho_0,\rho_\infty)^\frac17\rt)e^{-c t}.
\]
\end{maintheorem}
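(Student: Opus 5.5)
We couple two copies of the nonlinear McKean--Vlasov process directly, without going through the particle system, and reuse the coupling from the propagation-of-chaos argument. Let $(X_t,Y_t)$ solve the nonlinear SDE with drift $F\oast\bar\rho_t$ and law $\bar\rho_t$. Let $(\tilde X_t,\tilde Y_t)$ solve the corresponding SDE with drift $F\oast\bar\gamma_t$ and law $\bar\gamma_t$. The initial data $(X_0,Y_0),(\tilde X_0,\tilde Y_0)$ are chosen as an optimal $\calW_1$-coupling of $\rho_0,\gamma_0$.

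\emph{Coupling of the types.} We couple $Y$ and $\tilde Y$ by the optimal spin-flip coupling. When $Y_t=\tilde Y_t$ both flip together. When $Y_t\neq\tilde Y_t$ the two chains flip independently, and once they meet they stay together. Since the flip rates do not depend on position or law, $\mathbb P(Y_t\neq\tilde Y_t)$ obeys the linear ODE $\frac{d}{dt}p=-(\alpha_1+\alpha_{-1})p$. Hence
\[
\mathbb P(Y_t\neq \tilde Y_t)=e^{-(\alpha_1+\alpha_{-1})t}\,\mathbb P(Y_0\neq\tilde Y_0).
\]

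\emph{Coupling of the positions.} Let $T^*$ be the coalescence time of the types. On $\{t\ge T^*\}$ the diffusion coefficients agree, so we can use reflection coupling of the Brownian motions (synchronous once $X_t=\tilde X_t$). Before $T^*$ the coefficients $\sigma(Y_t)\neq\sigma(\tilde Y_t)$ differ, and we only use the trivial bound $|X_t-\tilde X_t|\le\sqrt d/2$. We apply It\^o's formula to $f(|X_t-\tilde X_t|)$, or rather to a concave modification of it adapted to Eberle's construction on the torus. Reflection gives a contraction term $-\kappa f$, where $\kappa$ depends on $\min\sigma$ and $d$. The drift difference is bounded by
\[
|F\oast\bar\rho_t(X_t)-F\oast\bar\gamma_t(\tilde X_t)|\le \eta f(|X_t-\tilde X_t|)+\eta\,\EE\big[f(|X_t-\tilde X_t|)\big]+2\eta\,\mathbb P(Y_t\neq\tilde Y_t).
\]
This uses \eqref{hyp_F}, the bound $|F|\le\eta$, and the fact that the synchronous pair is itself a coupling of $\bar\rho_t,\bar\gamma_t$. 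With $h(t)=\EE f(|X_t-\tilde X_t|)\mathbf 1_{\{t\ge T^*\}}$, taking expectations gives
\[
h'\le -(\kappa-2\eta C)h+C\eta e^{-(\alpha_1+\alpha_{-1})t}\,\mathbb P(Y_0\neq\tilde Y_0)+(\text{jump contribution at }T^*).
\]
The contribution on $\{t<T^*\}$ is at most $C\,\mathbb P(T^*>t)$, which decays exponentially. For $\eta$ small, Gronwall gives $\calW_1(\bar\rho_t,\bar\gamma_t)\le C(\calW_1(\rho_0,\gamma_0)+\text{(type mismatch term)})e^{-ct}$.

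\emph{Main obstacle: the fractional power $\frac17$.} The difficulty is the period before the types coalesce. During that time the position distance can grow, since different diffusivities prevent reflection from contracting. The term $\mathbb P(Y_0\neq\tilde Y_0)\le\frac12\calW_1(\rho_0,\gamma_0)$ is linear, but we also have to control the position gap created on $\{t<T^*\}$ and carried forward. My plan is to split time at $t_0=\theta\log(1/\calW_1(\rho_0,\gamma_0))$. On $[0,t_0]$ we bound the growth of $\EE|X_t-\tilde X_t|$ crudely: Gronwall with the Lipschitz constant $\eta$, plus a diffusion mismatch term of order $\sqrt{t\,\mathbb P(Y\neq\tilde Y)}$, which via Burkholder--Davis--Gundy gives a square-root loss. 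After $t_0$ we run the contraction estimate. Optimizing the interpolation between the square-root loss, the torus bound $\sqrt d/2$, and the exponential rates produces a H\"older exponent; I expect the bookkeeping to land at $\frac17$.

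Uniqueness and convergence to the stationary law follow by taking $\gamma_0=\rho_\infty$, a stationary solution. Existence of $\rho_\infty$ comes from Schauder's or Krylov--Bogoliubov's argument on the compact space $\Pi$, or alternatively from the Cauchy property of $t\mapsto\bar\rho_t$ implied by the contraction estimate.
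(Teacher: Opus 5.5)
\paragraph{Verdict.} Your main argument is correct and follows a genuinely different route from the paper. It actually gives a stronger, linear contraction. The last paragraph rests on a misdiagnosis and should be dropped.

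\paragraph{The ``obstacle'' is not real.} The ``jump contribution at $T^*$'' you leave open is harmless. At the coalescence time the indicator in $h$ switches on and adds $f(|X_{T^*}-\tilde X_{T^*}|)\le \sup f$, which is $1$ for $f(r)=\sin(\pi r/\sqrt d)$. Moreover, $\mathbb P(0<T^*\le t)=(1-e^{-\lambda t})\,\mathbb P(Y_0\neq\tilde Y_0)$ with $\lambda=\alpha_1+\alpha_{-1}$. So this term is at most $\lambda e^{-\lambda t}\,\mathbb P(Y_0\neq\tilde Y_0)\le \frac{\lambda}{2}e^{-\lambda t}\calW_1(\rho_0,\gamma_0)$.

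That is exactly the ``position gap created on $\{t<T^*\}$ and carried forward''. Bounding it by the diameter of $\T^d$ costs only a linear factor.

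Now use $\EE f(|X_t-\tilde X_t|)\le h(t)+\mathbb P(T^*>t)$ in the mean-field term. Gronwall then gives
\[
\calW_1(\bar\rho_t,\bar\gamma_t)\le C\,\calW_1(\rho_0,\gamma_0)\,e^{-ct}
\]
for any $c<\min\{\kappa-2C\eta,\ \alpha_1+\alpha_{-1}\}$. Since $\calW_1\le \frac{\sqrt d}{2}+2$ on $\Pi$, this implies the stated bound with the power $\frac17$.

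The time-splitting/Burkholder--Davis--Gundy plan is therefore unnecessary. As written it is also not an argument: nothing shows it lands at an exponent $\ge \frac17$, and a smaller exponent would not give the statement.

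\paragraph{A technical correction.} Switching to a synchronous coupling ``once $X_t=\tilde X_t$'' does not keep the positions together. The drifts $F\oast\bar\rho_t$ and $F\oast\bar\gamma_t$ differ, and the hard switch leaves the coupled SDE ill-defined at $0$. Use instead the interpolation $\phi_r^\delta,\phi_s^\delta$ of Section \ref{reflection coupling}. After $T^*$ the diffusivities agree, so the noise of $X_t-\tilde X_t$ is purely radial, $-2\phi_r^\delta\sigma\, e_te_t^{T}\,\d B_t$. The only price is an error $\kappa f(\delta)$, and you can let $\delta\to0$ at fixed $t$.

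\paragraph{Comparison with the paper.} The paper keeps the reflection coupling running while $Y_t\neq Q_t$ as well. The noise of ${\rm E}_t$ then has a tangential part $(\sigma(Y_t)-\sigma(Q_t))(I_d-e_te_t^{T})$. This produces the singular drift $\frac{(d-1)(\sigma(Y_t)-\sigma(Q_t))^2}{2|{\rm E}_t|}$ in $\d|{\rm E}_t|$ and forces the regularization $\psi_a(r)=\sqrt{r+a}$.

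The cost is a term $C a^{-1/2}e^{-(\alpha_1+\alpha_{-1})t}\EE|Y_0-Q_0|$ together with an error $Ca^{1/12}$ (taking $\delta\sim a^{1/9}$). Optimizing over $a$ is precisely what produces the exponent $\frac17$ and the rate $\frac17\min\{c_*,\alpha_1+\alpha_{-1}\}$.

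Your use of the coalescence time avoids the mismatch period altogether. It relies only on compactness of $\T^d$ and the exponential tail of $T^*$, and needs no $a$-regularization. This buys a Lipschitz rather than H\"older-$\frac17$ dependence on the initial distance, and a better rate. The paper's route, in exchange, reuses the Section \ref{sec_poc} computation almost verbatim and avoids stopping-time arguments.
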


Finally, we investigate the complementary regime of stronger interactions. While Theorems \ref{thm_propa} and \ref{thm_lt} yield uniqueness and convergence to the homogeneous equilibrium in the weak-interaction regime, we show that for sufficiently large interaction strength the homogeneous stationary state may lose stability through a bifurcation mechanism. Such bifurcation phenomena for stationary solutions of single-species aggregation--diffusion equations were studied in \cite{CGPS}, we extend this analysis to the two-type system \eqref{pde}.

\begin{maintheorem}[Bifurcation criterion for large interaction strength]
  Assume that the interaction potentials $U$ and $V$ are coordinate-wise even. Then the system \eqref{pde} admits the spatially homogeneous stationary state
\[
(u_c,v_c)=\left(\frac{\alpha_{-1}}{\alpha_1+\alpha_{-1}},\, \frac{\alpha_1}{\alpha_1+\alpha_{-1}}\right) \quad\text{for all }\eta\ge0.
\]
Moreover, there exists an explicit threshold $\eta_*>0$, computable in terms of the Fourier modes of the interaction potentials $U$ and $V$, such that if $\eta$ crosses $\eta_*$ and a natural nondegeneracy condition holds, then $(u_c,v_c)$ becomes a bifurcation point. More precisely, a nontrivial branch of stationary solutions emerges from $(u_c,v_c)$ in a neighborhood of $\eta=\eta_*$.
\end{maintheorem}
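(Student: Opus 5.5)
The plan is to apply the Crandall--Rabinowitz theorem for bifurcation from a simple eigenvalue, following the scheme of \cite{CGPS}, to a fixed-point reformulation of the stationary problem for \eqref{pde}. First, we verify that $(u_c,v_c)$ is stationary for every $\eta$. Constants are killed by the Laplacian. By oddness, $\nabla U*u_c=u_c\int_{\T^d}\nabla U\,\d x=0$, and likewise $\nabla V*v_c=0$. Finally, $-\al_1u_c+\al_{-1}v_c=0$ by the choice of $(u_c,v_c)$.

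To introduce the bifurcation parameter, we write the potentials as $U=\eta U_0$ and $V=\eta V_0$ with $U_0,V_0$ fixed and normalized. Perturbations $(u,v)=(u_c,v_c)+(\phi,\psi)$ are taken in a space $X$ of coordinate-wise even functions in $H^s(\T^d)^2$ with $\int(\phi+\psi)=0$. The stationary equation then becomes
\[
G(\phi,\psi;\eta)=0,
\]
where $G$ is $\mathcal{C}^2$, and $G(0;\eta)\equiv0$ by the first step. Since $U_0,V_0$ are coordinate-wise even, the nonlocal terms preserve this symmetry class. This is what should cut down the multiplicity of the Fourier kernel.

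Next, we linearize at $0$ and diagonalize in the cosine basis $w_k(x)=\prod_i\cos(2\pi k_ix_i)$. On the $k$-th mode, $D_{(\phi,\psi)}G(0;\eta)$ acts as a $2\times2$ matrix:
\[
M_k(\eta)=\begin{pmatrix}-\frac{\sigma^2(1)}{2}4\pi^2|k|^2-\al_1-\eta u_c4\pi^2|k|^2\hat U_0(k) & \al_{-1}-\eta u_c4\pi^2|k|^2\hat V_0(k)\\[2pt] \al_1-\eta v_c4\pi^2|k|^2\hat U_0(k) & -\frac{\sigma^2(-1)}{2}4\pi^2|k|^2-\al_{-1}-\eta v_c4\pi^2|k|^2\hat V_0(k)\end{pmatrix}.
\]
The signs should be rechecked against the convention $-\na\cdot(u\,\nabla U*u)$. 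The determinant $\det M_k(\eta)$ is affine in $\eta$, which gives for each $k\neq0$ an explicit critical value $\eta_k$ (whenever it is positive). We define $\eta_*:=\eta_{k^*}$ for a chosen mode $k^*$. The nondegeneracy condition is the standard one: $\eta_{k}\neq\eta_{k^*}$ for all $k$ outside the symmetry orbit of $k^*$. Restricting further to functions invariant under coordinate permutations if needed, the kernel of $D G(0;\eta_*)$ is then one-dimensional, spanned by $w_{k^*}e$ with $e\in\ker M_{k^*}(\eta_*)$.

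The remaining hypotheses of Crandall--Rabinowitz are handled as follows. For the Fredholm property, we work with $G$ composed with the inverse of the (invertible) diffusion--reaction part on mean-zero functions, so that the map has the form identity plus compact. The range then has codimension one, and it is characterized by orthogonality to $w_{k^*}e^*$, where $e^*\in\ker M_{k^*}(\eta_*)^T$. The mode $k=0$ must be treated separately. Its matrix is $\begin{pmatrix}-\al_1&\al_{-1}\\ \al_1&-\al_{-1}\end{pmatrix}$, which is singular, but its kernel is exactly the mass direction, and that direction is removed by the constraint $\int(\phi+\psi)=0$.

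I expect the transversality condition to be the main obstacle. It requires
\[
\partial_\eta D G(0;\eta_*)[w_{k^*}e]\notin\mathrm{Ran}\,DG(0;\eta_*),
\quad\text{i.e.}\quad \langle e^*,\partial_\eta M_{k^*}(\eta_*)e\rangle\neq0.
\]
Because the matrix is non-symmetric and of two-species type, this condition is not automatic. I would first try to show it is equivalent to $\frac{\d}{\d\eta}\det M_{k^*}(\eta)\big|_{\eta_*}\neq0$ together with simplicity of the zero eigenvalue, meaning $\operatorname{tr}M_{k^*}(\eta_*)\neq0$. This equivalence holds since $\partial_\eta\det M=\operatorname{tr}(\operatorname{adj}M\,\partial_\eta M)$ and $\operatorname{adj}M=e\,e^{*T}$ up to scaling at a rank-one point. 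Under the stated nondegeneracy hypothesis, that reduces transversality to an explicit inequality in the Fourier data of $U_0$ and $V_0$, which is precisely what the informal theorem asserts.
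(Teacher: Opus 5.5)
Your architecture is the paper's: Crandall--Rabinowitz on coordinate-wise even perturbations, identity plus compact after inverting the elliptic part, diagonalization in the cosine basis into $2\times2$ blocks, and $\eta_k$ read off from $\det M_k(\eta)=0$. The step that genuinely differs is transversality, and there your route is better than the paper's.

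The paper pairs $D_{r\eta}\Phi(0,\eta_*)[\tilde v_0]$ with $\tilde v_0$ itself. That identifies the range only if the cokernel equals the kernel, so the paper imposes condition (iii) of Theorem \ref{bifurcation}: symmetry of $M(k^*,\eta_*)$, an extra equality constraint on the Fourier data. Even then it leaves $\mathcal Q\neq0$ unchecked. Pairing with the left null vector $e^*$ and using Jacobi's formula removes (iii) altogether, and in fact makes transversality automatic.

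To see this, write the mode-$k$ linearization as $-A_k+\eta|2\pi k|^2\,c\,b^{T}$. Here $A_k$ is the diffusion--reaction block, with $\det A_k=D_1(k)D_{-1}(k)-\al_1\al_{-1}>0$, and $c=(u_c,v_c)^{T}$, $b=(\hat U_0(k),\hat V_0(k))^{T}$. The $\eta$-part has rank one, so the determinant is affine in $\eta$ with constant term $\det A_k\neq0$. Hence at any root $\eta_k$ its slope is nonzero. Moreover $M_{k^*}(\eta_*)\neq0$, since otherwise $A_{k^*}$ would have rank at most one. Therefore $\operatorname{adj}M_{k^*}(\eta_*)=\lambda\,e\,e^{*T}$ with $\lambda\neq0$, whatever the trace.

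So you should drop your extra requirement $\operatorname{tr}M_{k^*}(\eta_*)\neq0$. Algebraic simplicity of the zero eigenvalue matters for exchange of stability, not for Crandall--Rabinowitz. Your own adjugate identity already gives transversality if and only if $\partial_\eta\det M_{k^*}(\eta_*)\neq0$, and that holds automatically whenever $\eta_{k^*}$ exists. Your criterion then needs only simplicity of the critical mode and $\eta_{k^*}>0$, which is strictly weaker than the paper's (i)--(iii). Under (iii), the same computation also shows that the paper's $\mathcal Q$ is nonzero.

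There are three smaller points. First, your constraint $\int(\phi+\psi)=0$, with $k=0$ treated separately, is equivalent to the paper's $L^2_{0,s}\times L^2_{0,s}$. The paper gets $\int m=\int n=0$ by integrating the stationary equations. This works provided you take the target space $\{\int(f+g)=0\}$, on which the $k=0$ block acts on $(a,-a)$ as multiplication by $-(\al_1+\al_{-1})$.

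Second, your orbit-wise nondegeneracy with restriction to permutation-invariant functions is legitimate only when $U_0$ and $V_0$ are themselves permutation invariant. Otherwise you need strict uniqueness of $k^*$, as in the paper.

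Third, the sign you flagged is indeed wrong. The $k$-th coefficient of $\nabla\cdot(\nabla U_0*\phi)$ is $-4\pi^2|k|^2\hat U_0(k)\hat\phi(k)$, and the drift enters with a minus sign, so the $\eta$-terms of $M_k(\eta)$ carry a plus sign. After this correction, multiplying by $-\operatorname{diag}(D_1(k),D_{-1}(k))^{-1}$ gives exactly the paper's $M(k,\eta)$ and its $\eta_k$.
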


%
%
%
%
%
%
\subsection{Organization of paper}

The rest of this paper is organized as follows. In Section \ref{sec_pre}, we collect the probabilistic ingredients used throughout the paper. We recall the $N$-particle diffusion--jump dynamics and record its forward Liouville equation, and we then review the nonlinear McKean--Vlasov limit together with the coupling tools. In particular, we introduce the one-dimensional spin-flip dynamics for the internal state and a reflection coupling for the spatial diffusion component. Section \ref{sec_poc} is devoted to establishing a uniform-in-time propagation of chaos estimate for the interacting particle system \eqref{eq0}--\eqref{eq1}. More precisely, we obtain a uniform-in-time control of the $1$-Wasserstein distance between the empirical measure $\mu_t^N$ and the mean-field law $\bar\rho_t$, using a mixed coupling argument that combines reflection coupling in space with an optimal coupling of the spin variables. Section \ref{sec_lt} turns to the mean-field dynamics and proves an exponential contraction estimate between two solutions of the McKean--Vlasov equation  \eqref{new_pde} in the $1$-Wasserstein distance. As a consequence, we obtain exponential convergence of any solution toward the unique stationary equilibrium distribution in the regime of small interaction strength. Finally, in Section \ref{sec_bif}, we study the stationary problem for large interaction strength and show that multiple equilibria may emerge through a bifurcation from the spatially homogeneous state. We derive a computable threshold $\eta_*$ and provide a bifurcation criterion based on the Crandall--Rabinowitz theorem.

%
%
%
%
%
%

\section{Preliminaries}\label{sec_pre}

%
%
%
%
%
%

\subsection{Finite-particle system and Liouville equation}

For later reference, we record the forward (Liouville) equation satisfied by the law $\rho_N(t)$ of the $N$-particle diffusion--jump system \eqref{eq0}--\eqref{eq1}.  Applying It\^o's formula to smooth test functions on $\Pi^N$ yields a linear evolution equation consisting of a diffusion operator in the spatial variables, a transport term generated by the mean-field interaction, and gain--loss terms encoding the independent type switches.  Its explicit form reads
\[\begin{aligned}
\pa_t \rho_N &=\Delta_N \rho_N -\frac1N\sum_{i,j=1}^N \nabla_{x_i}\cdot\lt(\rho_N\,F(x_i-x_j,y_j)\rt)  +\alpha_1\sum_{i=1}^N\lt(\chi_{-1}(y_i)\rho_N(\bx,\tilde\Theta_{-1}^i(\by)) -\chi_{1}(y_i)\rho_N(\bx,\by)\rt) \\
&\quad +\alpha_{-1}\sum_{i=1}^N\lt(\chi_{1}(y_i)\rho_N(\bx,\tilde\Theta_{1}^i(\by))-\chi_{-1}(y_i)\rho_N(\bx,\by)\rt),
\end{aligned}\]
where $\Delta_N$ denotes the diffusion operator acting on the spatial variables with the type-dependent coefficients inherited from \eqref{eq0}, given by
\[
\Delta_N \rho_N := \sum_{i=1}^N \frac{\sigma^2(y_i)}{2} \Delta_{x_i} \rho_N,
\]
and $\tilde\Theta_{\pm1}^i$ modifies the spin configuration by resetting the $i$-th component:
\[
\tilde \Theta_1^i(\by):=(y_1,\dots,y_{i-1},-1,y_{i+1},\dots,y_N),\quad \tilde \Theta_{-1}^i(\by):=(y_1,\dots,y_{i-1},1,y_{i+1},\dots,y_N).
\]
A derivation can be found in \cite[Section 2]{ACCL}.  We remark that the Liouville formulation is not used directly in the sequel; it is recalled here mainly to highlight that the particle system defines a well-posed Markov process whose law evolves according to a linear equation on $\Pi^N$.

%
%
%
%
%
%

\subsection{Nonlinear McKean--Vlasov process}

The nonlinear equation \eqref{intro_new_pde}, which governs the limiting density of the particle system \eqref{eq0}--\eqref{eq1}, was introduced in the Introduction. In this subsection we show that it admits an equivalent formulation in terms of a nonlinear McKean--Vlasov stochastic process.
Let $\bar\rho_t$ be a probability measure on $\Pi=\T^d\times\{\pm1\}$. We consider the nonlinear evolution equation
\bq\label{new_pde}
\pa_t\bar\rho = \frac{\sigma^2(y)}{2}\Delta_x\bar\rho - \nabla_x\cdot(\bar\rho\,F\oast\bar\rho) + {\rm T}\bar\rho,
\eq
which coincides with \eqref{intro_new_pde}, where ${\rm T}$ denotes the linear operator describing transitions between the two internal states with rates $\alpha_1$ and $\alpha_{-1}$.

To clarify the probabilistic structure, we first associate with a given measure-valued curve $(\bar\rho_t)_{t\ge0}$ a linear jump--diffusion process. Let $(\bar \rho_t)_{t\ge0}$ be a measurable curve of probability measures in $L^p(\Pi)$, $p \in (1,\infty)$, with initial data $\bar\rho_0$. We define
a $\Pi$-valued process $(\bar X_t,\bar Y_t)$ by 
\begin{align} 
\d\bar X_t &= \lt(\iint_\Pi F(\bar X_t-x,y)\,\d\bar\rho_t(x,y)\rt)\d t + \sigma(\bar Y_t)\,\d B_t, \label{main_lim}\\
\bar Y_t &= \bar Y_0+ \int_0^t ( -1 - \bar Y_{s-} )\, \d\tilde E_{1}(s) + \int_0^t ( 1 - \bar Y_{s-} )\, \d\tilde E_{-1}(s), \label{barY}
\end{align}
where
\[
  \tilde E_1(t) = E_1 \left( \al_1 \int_0^t \chi_1(  \bar Y_{s-}) \d s \right) \quad \mbox{and} \quad
    \tilde E_{-1}(t) = E_{-1} \left( \al_{-1} \int_0^t \chi_{-1}(  \bar Y_{s-}) \d s \right).
    \]
Here $E_1$ and $E_{-1}$ are the independent unit-rate Poisson processes and $\chi_k:\{1,-1\}\to\{0,1\}$, $k=\pm1$, is defined by $\chi_k(l)=1$ if $l=k$ and $\chi_k(l)=0$ otherwise. 

The following proposition makes precise the equivalence between the nonlinear PDE \eqref{new_pde} and the McKean--Vlasov process  \eqref{main_lim}--\eqref{barY}.

\begin{proposition}
Let $(\bar X_t,\bar Y_t)$ be a solution of \eqref{main_lim}--\eqref{barY} with initial law $\bar\rho_0$. Then, the law $\eta_t := \Law(\bar X_t,\bar Y_t)$ belongs to $C([0,\infty);L^p(\Pi))$ and is the unique distributional solution to 
\bq\label{eq_lin}
\pa_t \eta_t = \frac{\sigma^2(y)}{2}\Delta_x \eta_t - \nabla_x \cdot (\eta_t\, F \oast \bar\rho_t) + {\rm T} \eta_t,
\eq
with initial data $\bar\rho_0$, where the operator ${\rm T}$ is given by
\[
{\rm T}\eta(x,y) = (-1)y \alpha_1 \eta(x,1) + y \alpha_{-1} \eta(x,-1).
\]
In particular, if $(\bar\rho_t)_{t\ge0}$ is a solution of the nonlinear equation \eqref{new_pde}, then 
\[
\bar\rho_t=\Law(\bar X_t,\bar Y_t), \quad \text{for all $t\ge0$}.
\]
\end{proposition}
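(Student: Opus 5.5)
The plan has three parts: show that $\eta_t$ solves \eqref{eq_lin} via It\^o's formula, establish uniqueness for this linear equation by duality, and close the nonlinear identification with a fixed-point argument.

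\textbf{Existence via It\^o's formula.} Fix the curve $(\bar\rho_t)$ and set $b_t(w):=(F\oast\bar\rho_t)(w)$. By \eqref{hyp_F}, $b_t$ is bounded by $\eta$ and Lipschitz in $w$ with constant $\eta\pi/\sqrt d$, uniformly in $t$. Hence \eqref{main_lim}--\eqref{barY} is a linear jump--diffusion with bounded Lipschitz drift, and it has a pathwise unique strong solution.

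Take a test function $\varphi\in C^{2}(\Pi)$, meaning $\varphi(\cdot,\pm1)\in C^2(\T^d)$. Between jumps of $\bar Y$, apply It\^o's formula to $\varphi(\bar X_t,\bar Y_t)$. At jump times, use the compensators of $\tilde E_{\pm1}$: a jump from $1$ to $-1$ occurs with intensity $\alpha_1\chi_1(\bar Y_{s-})$, and the reverse jump with intensity $\alpha_{-1}\chi_{-1}(\bar Y_{s-})$. The martingale terms are true martingales because $\varphi$ and $\nabla\varphi$ are bounded and the rates are finite. Taking expectations gives
\[
\frac{\d}{\d t}\EE\varphi(\bar X_t,\bar Y_t)=\EE\Big[\tfrac{\sigma^2(\bar Y_t)}{2}\Delta_x\varphi+b_t\cdot\nabla_x\varphi+\mathcal L\varphi\Big](\bar X_t,\bar Y_t),
\]
where $\mathcal L\varphi(x,y)=\alpha_1\chi_1(y)(\varphi(x,-1)-\varphi(x,1))+\alpha_{-1}\chi_{-1}(y)(\varphi(x,1)-\varphi(x,-1))$.

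Next identify the adjoint of $\mathcal L$ with respect to $\d z$. Summing over $y$, the adjoint at $(x,1)$ is $-\alpha_1\eta(x,1)+\alpha_{-1}\eta(x,-1)$, and at $(x,-1)$ it is the negative of this. This is exactly ${\rm T}\eta$ as defined above, since ${\rm T}\eta(x,y)=y(\alpha_{-1}\eta(x,-1)-\alpha_1\eta(x,1))$. So $\eta_t$ is a distributional solution of \eqref{eq_lin}.

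\textbf{Regularity in $L^p$.} I would use a Duhamel/mild formulation. Both $\eta(\cdot,1)$ and $\eta(\cdot,-1)$ are advected by a bounded field, diffused by heat semigroups with $\sigma(\pm1)>0$, and coupled by a bounded linear zeroth-order operator. Standard parabolic estimates on $\T^d$ then give $\eta\in C([0,\infty);L^p)$ whenever $\bar\rho_0\in L^p$. One way is an energy estimate for $\|\eta_t\|_{L^p}^p$ after mollification: the divergence term is controlled through $\nabla\cdot b_t\in L^\infty$, since $b_t$ is Lipschitz. Alternatively, one can use Gronwall directly on the mild formulation.

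\textbf{Uniqueness by duality.} Since \eqref{eq_lin} is linear, it suffices to show that a distributional solution with zero initial data vanishes. For $\psi\in C^\infty(\Pi)$ and a terminal time $T$, solve the backward Kolmogorov system
\[
\pa_s\phi+\tfrac{\sigma^2(y)}2\Delta_x\phi+b_s\cdot\nabla_x\phi+\mathcal L\phi=0,\qquad \phi(T)=\psi.
\]
This is a weakly coupled linear parabolic system with bounded Lipschitz coefficients. It has a solution that is $C^{1,2}$ in $x$ (for instance by a Schauder or mild-solution fixed point), or at least one regular enough to be paired with an $L^p$-solution. Testing the solution against $\phi$ gives $\int\psi\,\d\eta_T=0$ for every such $\psi$. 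This duality step is the main technical obstacle, because the time regularity of $b_s$ is only measurable. I would handle it by working in $W^{2,p}$/Sobolev spaces with $L^\infty_t$ coefficients rather than in classical Schauder spaces. Alternatively, one can mollify $b$ in time and pass to the limit using the $L^p$ bound on $\eta$.

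\textbf{The nonlinear case.} If $\bar\rho$ solves \eqref{new_pde}, then $\bar\rho$ itself is a distributional solution of \eqref{eq_lin} with the same drift $F\oast\bar\rho$ and initial data $\bar\rho_0$. By uniqueness, $\bar\rho_t=\eta_t=\Law(\bar X_t,\bar Y_t)$ for all $t\ge0$.
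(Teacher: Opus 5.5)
Your It\^o step is the same as the paper's. Both derive the weak form of \eqref{eq_lin} from the jump--diffusion It\^o formula, with the compensators of $\tilde E_{\pm1}$ producing the flip generator whose $\d z$-adjoint is ${\rm T}$. Both also handle the nonlinear case the same way: a solution of \eqref{new_pde} solves \eqref{eq_lin} with its own drift, and linear uniqueness identifies it with the law. So the ``fixed-point argument'' announced in your first sentence is never needed.

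The genuine difference is the uniqueness step. The paper simply invokes well-posedness of \eqref{eq_lin} in $C([0,\infty);L^p(\Pi))$ from \cite[Theorem~2]{ACCL}. You instead prove uniqueness by duality with the backward Kolmogorov system. The citation is shorter, but it passes over a point: It\^o's formula only shows that $\eta_t$ is a probability-measure-valued weak solution. Uniqueness within the $L^p$ class applies to $\eta_t$ only once the law is known to have an $L^p$ density. Your self-contained route can close this gap, but it should be ordered the other way round from how you wrote it.

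Run the duality directly on measure-valued solutions, as follows.
\begin{itemize}
\item After mollifying $b$ in time, the dual system has a classical solution $\phi^\epsilon$.
\item The maximum principle gives $\|\phi^\epsilon\|_\infty\le\|\psi\|_\infty$.
\item The gradient $\|\nabla_x\phi^\epsilon\|_\infty$ is bounded uniformly in $\epsilon$. Use Duhamel with the heat kernels $G_t$, $\|\nabla G_t\|_{L^1}\lesssim t^{-1/2}$, and a singular Gronwall inequality that needs only $\sup|b|\le\eta$ from \eqref{hyp_F}.
\item For a solution with zero initial datum, the pairing with $\psi$ at time $T$ is bounded by $C\sup_s\|\eta_s\|_{\mathrm{TV}}\int_0^T\sup_x|b_s-b^\epsilon_s|\,\d s$. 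This tends to zero because $b_s$ is bounded and equi-Lipschitz in $x$.
\end{itemize}
This gives uniqueness among finite signed measure-valued solutions with no $L^p$ information on $\eta$. Then $\eta\in C([0,\infty);L^p)$ follows by identifying $\eta_t$ with the $L^p$ solution produced by the Duhamel fixed point.

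By contrast, your two ways of getting regularity first are the weak link as written. Energy estimates on the mollified law need commutator bounds valid for measures. Pairing with $W^{2,p'}$-type dual solutions presupposes exactly the density you want.

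There is also a one-stroke alternative. Testing against heat-evolved test functions puts $\eta_t$ into the mild formulation. A total-variation Volterra--Gronwall argument with the same $t^{-1/2}$ kernel then gives uniqueness and $L^p$ regularity together.
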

 
\begin{proof}
Let $\varphi\in C^2(\Pi)$ be an arbitrary test function. By the definition of the law,
\[
\EE \varphi(\bar X_t,\bar Y_t) = \iint_\Pi \varphi(z)\,\d\eta_t(z).
\]
 Applying It\^o's formula for jump--diffusion processes to
$\varphi(\bar X_t,\bar Y_t)$ yields
\begin{align*}
\varphi(\bar X_t, \bar Y_t) &= \varphi(\bar X_0, \bar Y_0) + \int_0^t \frac{\sigma^2(\bar Y_s)}{2} \Delta_x \varphi(\bar X_s, \bar Y_s)\,\d s   + \int_0^t \sigma(\bar Y_s)\nabla_x \varphi(\bar X_s, \bar Y_s)\cdot \d B_s \\
&\quad + \int_0^t \nabla_x \varphi(\bar X_s, \bar Y_s)\cdot \lt(\iint_\Pi F(\bar X_s-x,y)\,\d\bar\rho_s(x,y)\rt)\d s \\
&\quad + \int_0^t \lt\{ \varphi(\bar X_{s-},-1) - \varphi(\bar X_{s-},\bar Y_{s-}) \rt\}\,\d\tilde E_1(s) \\
&\quad + \int_0^t \lt\{ \varphi(\bar X_{s-},1) - \varphi(\bar X_{s-},\bar Y_{s-}) \rt\}\,\d\tilde E_{-1}(s) \\
&=: \sum_{i=1}^6 {\rm I}_i .
\end{align*}
The stochastic integral with respect to $\d B_s$ has zero expectation, hence
$\EE {\rm I}_3 = 0$. Taking expectations and using Fubini's theorem, we obtain
\begin{align*}
\EE {\rm I}_1 &= \iint_\Pi \varphi(z)\,\d\eta_0(z),\\
\EE {\rm I}_2 &= \int_0^t \iint_\Pi \frac{\sigma^2(y)}{2}\Delta_x \varphi(z)\,\d\eta_s(z)\,\d s,\\
\EE {\rm I}_4 &= \int_0^t \iint_\Pi \nabla_x \varphi(x,y)\cdot(F\oast\bar\rho_s)(x)\,\d\eta_s(x,y)\,\d s.
\end{align*}

For the jump terms, we use that $\tilde E_{\pm1}$ are time-changed Poisson processes with intensities $\alpha_{\pm1}\chi_{\pm1}(\bar Y_{s-})$. Therefore,
\begin{align*}
\EE {\rm I}_5 &= \int_0^t \iint_\Pi \alpha_1 \lt\{ \varphi(x,-1) - \varphi(x,y) \rt\} \chi_1(y)\,\d\eta_s(x,y)\,\d s,\\
\EE {\rm I}_6 &= \int_0^t \iint_\Pi \alpha_{-1} \lt\{ \varphi(x,1) - \varphi(x,y) \rt\} \chi_{-1}(y)\,\d\eta_s(x,y)\,\d s.
\end{align*}
Note that
\begin{align*}
\iint_\Pi {\rm T}\eta(z) \varphi(z)\,\d z &= \iint_\Pi \left\{ (-1)y \alpha_1 \eta(x,1) + y \alpha_{-1} \eta(x,-1)\right\} \varphi(x,y)\,\d z \cr
&= \int_{\T^d} \left\{ (-1) \alpha_1 \eta(x,1) + \alpha_{-1} \eta(x,-1) \right\} \varphi(x,1)\,\d x \cr
&\quad + \int_{\T^d} \left\{  \alpha_1 \eta(x,1) - \alpha_{-1} \eta(x,-1) \right\} \varphi(x,-1)\,\d x \cr
&= \int_{\T^d} \alpha_1 \lt\{ \varphi(x,-1) - \varphi(x,1) \rt\} \d\eta(x,1) \cr
&\quad + \int_{\T^d} \alpha_{-1} \lt\{ \varphi(x,1) - \varphi(x,-1) \rt\}  \d\eta(x,-1).
\end{align*}
Combining the above identities yields
\[
\frac{\d}{\d t}\iint_\Pi \varphi\,\d\eta_t = \iint_\Pi \lt( \frac{\sigma^2(y)}{2}\Delta_x \varphi + \nabla_x \varphi\cdot(F\oast\bar\rho_t) \rt) \d\eta_t
+ \iint_\Pi {\rm T}\eta_t\,\varphi\,\d z,
\]
which is precisely the weak formulation of \eqref{eq_lin}. Hence $\eta_t$ is a distributional solution of \eqref{eq_lin}.

Finally, since $F\in L^\infty(\Pi)$ and $\bar\rho_0\in L^p(\Pi)$ with $p\in(1,\infty)$, it follows from \cite[Theorem 2]{ACCL} that the linear equation \eqref{eq_lin} is well-posed in the class $C([0,\infty);L^p(\Pi))$. In particular, there exists a unique solution $\tilde\rho\in C([0,\infty);L^p(\Pi))$ to \eqref{eq_lin} with initial datum $\bar\rho_0$. On the other hand, we have shown above that $\eta_t=\Law(\bar X_t,\bar Y_t)$ is a distributional solution of \eqref{eq_lin} with the same initial condition $\eta_0=\bar\rho_0$. By uniqueness, we conclude that $\eta_t=\tilde\rho_t$ for all $t\ge0$. Setting $\bar\rho_t:=\tilde\rho_t$ yields
\[
\bar\rho_t=\Law(\bar X_t,\bar Y_t), \quad
t\mapsto\bar\rho_t\in C([0,\infty);L^p(\Pi)),
\]
which completes the proof.
\end{proof}
In \eqref{main_lim}--\eqref{barY}, the type component $\bar Y_t$ evolves autonomously and does not depend on the spatial variable $\bar X_t$. 
Let $\psi:\{-1,1\}\to\mathbb R$ and denote by $\xi_t:=\Law(\bar Y_t)$ its law.

Applying It\^o's formula for jump processes to $\psi(\bar Y_t)$ and taking expectations, we obtain
\begin{align*}
\frac{\d}{\d t}\int_{\{-1,1\}} \psi(y)\,\d\xi_t(y)
&= \alpha_{1}\big(\psi(-1)-\psi(1)\big)\,\xi_t(1)
   + \alpha_{-1}\big(\psi(1)-\psi(-1)\big)\,\xi_t(-1).
\end{align*}

Therefore, the Markov semigroup associated with $\bar Y_t$ has generator $L$ given by
\begin{equation}\label{bargen}
L f (y) = c(y)\big(f(-y)-f(y)\big),
\end{equation}
for $f:\{-1,1\}\to\mathbb R$, where
\[
c(y)=
\begin{cases}
\alpha_1, & y=1,\\
\alpha_{-1}, & y=-1.
\end{cases}
\]

%
%
%
%
%
%

 \subsection{One-dimensional spin-flip dynamics for the internal state} \label{section_flip}

In this part, we recall some basic facts on spin-flip Markov processes, which provide an equivalent microscopic description of the internal state dynamics appearing in both the finite-particle system and the nonlinear McKean--Vlasov limit. We follow standard notation from the theory of interacting particle systems and refer to \cite{Li,St} for further background.

Let $\Lambda_N := \{1,\dots,N\}$ and denote by $\Sigma_N := \{-1,+1\}^{\Lambda_N}$ the space of spin configurations. An element $\sigma\in\Sigma_N$ is written as $\sigma=(\sigma(i))_{i\in\Lambda_N}$, where $\sigma(i)\in\{-1,+1\}$ represents the internal state of particle $i$.
For $j\in\Lambda_N$, we denote by $\sigma^j$ the configuration obtained from $\sigma$ by
flipping the spin at site $j$, i.e.
\[
\sigma^j(k)=
\begin{cases}
-\sigma(j), & k=j,\\
\sigma(k), & k\neq j.
\end{cases}
\]
The spin-flip dynamics is defined through site-dependent flip rates $c(\sigma,j)$. The generator of the associated Markov process on $\Sigma_N$ is given by
\[
L_N f(\sigma) = \sum_{j\in\Lambda_N} c(\sigma,j)\lt(f(\sigma^j)-f(\sigma)\rt),
\]
acting on functions $f:\Sigma_N\to\R$. We denote by $(\sigma_t)_{t\ge0}$ the Markov process generated by $L_N$. Equivalently, $\sigma_t$ solves the martingale problem associated with $L_N$, namely
\bq\label{Mar}
f(\sigma_t)-f(\sigma_0)-\int_0^t L_N f(\sigma_s)\,\d s = M_t,
\eq
where $(M_t)_{t\ge0}$ is a martingale with respect to the natural filtration.

In the present work, we consider the constant flip rates
\bq\label{jump_rate}
c(\sigma,i)=
\begin{cases}
\alpha_1, & \sigma(i)=1,\\
\alpha_{-1}, & \sigma(i)=-1,
\end{cases}
\eq
which correspond exactly to the transition rates appearing in the jump SDE representation \eqref{eq1}. With this choice, each particle changes its internal state independently of the others, and the spin dynamics is spatially homogeneous.

To illustrate the resulting law of the internal state, consider the coordinate function $f^i:\Sigma_N\to\R$ defined by $f^i(\sigma)=\sigma(i)$.
A direct computation yields
\bq \label{igen}
L_N f^i(\sigma) = c(\sigma,i)\lt(\sigma^i(i)-\sigma(i)\rt) = -2 c(\sigma,i) \sigma(i).
\eq
Taking expectations in \eqref{Mar} with $f^i$ and using \eqref{jump_rate}, we obtain
\[
\mathbb{P}(\sigma_t(i)=1)-\mathbb{P}(\sigma_t(i)=-1) = \mathbb{E}[\sigma_0(i)] +2\int_0^t
-\alpha_1\,\mathbb{P}(\sigma_s(i)=1) +\alpha_{-1}\,\mathbb{P}(\sigma_s(i)=-1)\d s.
\]
Since $\mathbb{P}(\sigma_t(i)=1)+\mathbb{P}(\sigma_t(i)=-1)=1$, this identity can be rewritten as the closed system
\[\begin{aligned}
\mathbb{P}(\sigma_t(i)=1)
&= \mathbb{P}(\sigma_0(i)=1) - \int_0^t \alpha_1\,\mathbb{P}(\sigma_s(i)=1) -\alpha_{-1}\,\mathbb{P}(\sigma_s(i)=-1)\d s,\\ 
\mathbb{P}(\sigma_t(i)=-1) &= \mathbb{P}(\sigma_0(i)=-1)+ \int_0^t \alpha_1\,\mathbb{P}(\sigma_s(i)=1)-\alpha_{-1}\,\mathbb{P}(\sigma_s(i)=-1) \d s.
\end{aligned}\]
Solving this linear ODE system yields the explicit formula
\begin{align}\label{spin_law_0}
\begin{aligned}
\mathbb{P}(\sigma_t(i)=1) &= e^{-(\alpha_1+\alpha_{-1})t}\,\mathbb{P}(\sigma_0(i)=1) + \frac{\alpha_{-1}}{\alpha_1+\alpha_{-1}} \lt(1-e^{-(\alpha_1+\alpha_{-1})t}\rt),\\
\mathbb{P}(\sigma_t(i)=-1) &= e^{-(\alpha_1+\alpha_{-1})t}\,\mathbb{P}(\sigma_0(i)=-1) + \frac{\alpha_1}{\alpha_1+\alpha_{-1}}
\lt(1-e^{-(\alpha_1+\alpha_{-1})t}\rt).
\end{aligned}
\end{align}

In particular, the law of $\sigma_t(i)$ is independent of $i$, reflecting the fact that each particle undergoes the same autonomous spin-flip dynamics. The exponential relaxation rate $\alpha_1+\alpha_{-1}$ will play a key role in the coupling arguments developed later, both at the microscopic and at the mean-field level.

%
%
%
%
%
%

\subsection{Reflection coupling}\label{reflection coupling}
 The main analytical tool in this paper is a coupling construction tailored to interacting diffusion--jump systems with internal state switching. Our strategy combines a reflection-type coupling for the spatial diffusion component with an optimal coupling of the spin-flip dynamics governing the internal states. This mixed coupling is specifically designed to be compatible with the Wasserstein distance introduced earlier and forms the backbone of the proofs of uniform-in-time propagation of chaos and long-time contraction.

A reflection coupling was originally introduced for multidimensional diffusion processes in \cite{LR} and has since become a standard and robust tool for establishing quantitative stability properties, including exponential convergence to equilibrium \cite{Eb}. More recently, it has been successfully employed to obtain uniform-in-time propagation of chaos for Vlasov--McKean equations with weak interactions; see, for instance, \cite{DAGR}. In the present setting, we adapt this methodology to a system in which particles undergo both diffusive motion in space and random transitions between two internal states.

Before describing the full coupling construction for the interacting particle system, we briefly recall the classical reflection coupling for diffusion processes and then explain how it can be combined with an optimal coupling of the spin-flip dynamics in our framework.

Consider a diffusion process $(X_t)_{t\ge0}$ in $\R^d$ solving
\bq\label{sde_0}
\d X_t = b(X_t)\,\d t + \sigma\,\d B_t,
\eq
where $B_t$ is a $d$-dimensional Brownian motion, $\sigma$ is a constant $d\times d$ matrix with $\det\sigma>0$, and $b:\R^d \to \R^d$ is locally Lipschitz. A reflection coupling of two solutions of \eqref{sde_0} with initial laws $\mu$ and $\nu$ is a diffusion process $(X_t,\hat X_t)_{t\ge0}$ with initial law $\mathcal{L}(X_0,\hat X_0)=\gamma$, where $\gamma$ is a coupling of $\mu$ and $\nu$, and such that
\[
\begin{aligned}
\d X_t &= b(X_t)\,\d t + \sigma\,\d B_t,\\
\d \hat X_t &= b(\hat X_t)\,\d t + \sigma\bigl(I_d-2 e_t e_t^{\top}\bigr)\,\d B_t, \quad t<T,\\
\hat X_t &= X_t, \quad t\ge T,
\end{aligned}
\]
where
\[
T:=\inf\{t\ge0: X_t=\hat X_t\}, \quad e_t:=\frac{\sigma^{-1}(X_t-\hat X_t)}{|\sigma^{-1}(X_t-\hat X_t)|}.
\]
Here the matrix $I_d-2e_t e_t^{\top}$ represents the reflection with respect to the hyperplane
\[
H_{e_t}:=\{x\in\R^d:\langle e_t,x\rangle=0\}.
\]
Equivalently, the reflection transform acts on vectors $x\in\R^d$ as
\[
R_{e_t}(x)=x-2\langle e_t,x\rangle e_t,
\]
that is, it leaves the component orthogonal to $e_t$ unchanged while reversing the component in the direction of $e_t$. The reflected noise $(I_d-2e_t e_t^{\top})\,\d B_t$ remains a Brownian motion by L\'evy's characterization.
 
We now define the interaction particle system $(X_t^{i,N},Y_t^{i,N})_{i=1}^N$ approximating $\rho_t$ and explain the coupling  with $N$ independent copies $(\bar X_t^i,\bar Y_t^i)_{i=1}^N$ of the nonlinear McKean--Vlasov process given by \eqref{main_lim}--\eqref{barY}. The internal state processes $\mathbf Y_t=(Y_t^{i,N})_{i=1}^N$ form  the  spin-flip processes with rates $\alpha_1$ and $\alpha_{-1}$, introduced in Section \ref{section_flip} with $\mathbf Y_t$ with $\mathbf Y_t$ identified with $\sigma_t$.
As shown in \eqref{spin_law_0}, their one-particle marginals satisfy
\begin{align}\label{spin_law_Y}
\begin{aligned}
\mathbb{P}(Y_t^{i,N}=1) &= e^{-(\alpha_1+\alpha_{-1})t}\,\mathbb{P}(Y_0^{i,N}=1) + \frac{\alpha_{-1}}{\alpha_1+\alpha_{-1}} \lt(1-e^{-(\alpha_1+\alpha_{-1})t}\rt),\\
\mathbb{P}(Y_t^{i,N}=-1) &= e^{-(\alpha_1+\alpha_{-1})t}\,\mathbb{P}(Y_0^{i,N}=-1) + \frac{\alpha_{1}}{\alpha_1+\alpha_{-1}} \lt(1-e^{-(\alpha_1+\alpha_{-1})t}\rt)
\end{aligned}
\end{align}
for all $i$.
From \eqref{bargen} and \eqref{igen} we notice that $\bar Y_t$ shares the same generator $L$ as the marginal dynamics of $Y_t^{i,N}$.
 Indeed, choosing the identity as the test function, the martingale equation associated with $\bar Y_t$ is given by
 $$ \bar Y_t - \bar Y_0 - \int_0^t L \bar Y_s \, ds = M_t, $$where $L \bar Y_s = -2c(\bar Y_s)\bar Y_s$ according to \eqref{bargen}. 
 Thus, the evolution of $\EE[\bar Y_t]$ is governed by the same linear system as that of $\EE[Y_t^{i,N}]$ in \eqref{spin_law_Y}.
 To compare the two processes, we choose the joint law of $(Y_t^{i,N}, \bar Y_t^i)$ through the optimal coupling that attains the $\calW_1$ distance:
 $$ \EE |Y_t^{i,N} - \bar Y_t^i| = \calW_1\bigl(\Law(Y_t^{i,N}), \Law(\bar Y_t^i)\bigr). $$
 Since the state space is the finite set $\{-1, 1\}$, this optimal coupling is explicitly identified,
  and the distance can be expressed as
  \[
  \calW_1\bigl(\Law(Y_t^{i,N}), \Law(\bar Y_t^i)\bigr) = 2 \bigl|\mathbb{P}(Y_t^{i,N}=1) - \mathbb{P}(\bar Y_t^i=1)\bigr|.
 \]
Here we used that for $y,\bar y\in\{\pm1\}$,
\[
|y-\bar y| = 2 \mathbf 1_{\{y\neq \bar y\}}.
\]

To couple the spatial components $(X^{i, N}_t, \bar X^i_t)$, we introduce a smooth partition of unity $\phi_r^\delta,\phi_s^\delta:\R^d\to[0,1]$ satisfying
\begin{align}\label{separation}
(\phi_r^\delta)^2+(\phi_s^\delta)^2=1, \quad \phi_r^\delta(x)=
\begin{cases}
1, & |x|\ge\delta,\\
0, & |x|\le\delta/2,
\end{cases}
\end{align}
for some $\delta>0$. Setting ${\rm D}_t^i:=\bar X_t^i-X_t^{i,N}$ and $e_t^i=n({\rm D}_t^i)$ with $n(x)=x/|x|$ for $x\neq0$, we define the coupled dynamics by
\begin{align*}
\d X^{i, N}_t  &=   \phi_r^\del ({\rm D}_t^i)(I_d -2 e^i_t(e^i_t)^T)  \sigma(  Y^i_t) \, \d B^i_t +    \phi_s^\del ({\rm D}_t^i)  \sigma( Y^i_t)\, \d\tilde B^i_t + \frac 1N \sum_{j=1}^N  F(\xit-\xjt, Y^j_t)\, \d t,\\
 \d \bar X^i_t&  =    \phi_r^\del ({\rm D}_t^i)  \sigma( \bar Y^i_t) \, \d B^i_t+ \phi_s^\del ({\rm D}_t^i)  \sigma( \bar Y^i_t) \, \d\tilde B^i_t  +\iint_\Pi F(\bar X^i_t-x, y) \, \d\bar \rho_t(z) \d t \quad \mbox{ with }\mathrm{ Law }(\bar X^i_t, \bar Y^i_t) = \bar\rho_t,
\end{align*}
where $(B_t^i,\tilde B_t^i)_{i=1}^N$ are independent Brownian motions.

Throughout the sequel, we denote by $(X_t^{i,N},Y_t^{i,N})_{i=1}^N$ the interacting particle system and by $(\bar X_t^i,\bar Y_t^i)_{i=1}^N$ independent copies of the nonlinear McKean--Vlasov process. When no confusion arises, we drop the superscript $N$ in $X_t^{i,N}$ and $Y_t^{i,N}$.

%
%
%
%
%
%

\section{Uniform-in-time propagation of chaos}\label{sec_poc}
 
In this section, we prove the uniform-in-time propagation of chaos for the stochastic particle system \eqref{eq0}--\eqref{eq1}. 

Let $(X_t^i,Y_t^i)_{i=1}^N$ be a solution of the interacting particle system
\eqref{eq0}--\eqref{eq1}, and  recall that the associated empirical measure is the $\calP(\Pi)$-valued random variable  given by
\[
\mu_t^N = \frac1N \sum_{i=1}^N \delta_{(X_t^i,Y_t^i)} \in \calP(\Pi),
\quad \Pi=\T^d\times\{\pm1\}.
\]
Let $(\bar X_t,\bar Y_t)$ denote the limiting nonlinear process solving the McKean--Vlasov SDE associated with \eqref{new_pde}, and let 
\[
\bar\rho_t := {\rm Law}(\bar X_t,\bar Y_t)
\]
be its time-marginal distribution. The main result of this section provides a uniform-in-time estimate for the $1 $-Wasserstein distance between the empirical measure $\mu_t^N$ and its mean-field limit $\bar\rho_t$.

\begin{theorem}\label{thm_propa}  
Assume that \eqref{hyp_F} holds. Then for sufficiently small $\eta>0$ so that 
\[
c_0 := \frac{2\pi}{d} ( \sigma_{\rm min}^2 \pi - \eta \sqrt{d}) > 0.
\]
Then there exists a constant $C>0$, independent of $N$ and $t$, such that for all  $t >0$ and  $N \geq 2$,
\begin{align}\label{W1x}
\begin{aligned}
\mathbb E \calW_1(\pi_x {}_\# \mu^N_t,  \pi_x {}_\# \bar \rho_t) &\le  C   e^{-c_0t} \mathbb E \calW_1(\pi_x {}_\# \mu^N_0,  \pi_x {}_\# \bar \rho_0) +   \frac C{\sqrt{N}} (1 + {\bf 1}_{\{d=2\}} \log N )    \cr
&\quad   + CN^{\frac52}e^{-\min\{c_0, \,(\al_1+\al_{-1})\}t} \mathbb E \calW_1(\pi_y {}_\# \mu^N_0,  \pi_y {}_\# \bar \rho_0)
\end{aligned}
\end{align}
and
\[
\mathbb E \calW_1(\pi_y {}_\# \mu^N_t,  \pi_y {}_\# \bar \rho_t) \leq e^{-(\al_1+\al_{-1})t}\mathbb E \calW_1(\pi_y {}_\# \mu^N_0,  \pi_y {}_\# \bar \rho_0).
\]
\end{theorem}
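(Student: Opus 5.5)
We bound both Wasserstein distances through the coupling of Section \ref{reflection coupling}, comparing each particle $(X^i_t,Y^i_t)$ with its nonlinear copy $(\bar X^i_t,\bar Y^i_t)$. Write $\bar\mu^N_t:=\frac1N\sum_i\delta_{(\bar X^i_t,\bar Y^i_t)}$. By the triangle inequality,
\[
\EE\calW_1(\pi_x{}_\#\mu^N_t,\pi_x{}_\#\bar\rho_t)\le \frac1N\sum_{i=1}^N\EE|X^i_t-\bar X^i_t|+\EE\calW_1(\pi_x{}_\#\bar\mu^N_t,\pi_x{}_\#\bar\rho_t).
\]
The last term is the rate of convergence of the empirical measure of i.i.d.\ samples on the compact set $\T^d$. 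Fournier--Guillin type bounds give $C N^{-1/2}(1+{\bf 1}_{\{d=2\}}\log N)$ uniformly in $t$; for $d\ge3$ the rate is in fact $N^{-1/d}$, so we rely on the form stated in the theorem. Everything therefore reduces to a differential inequality for $h_t:=\frac1N\sum_i\EE f(|{\rm D}^i_t|)$, using $|x|\simeq f(|x|)$.

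\textbf{Type component.} The spins are autonomous, so we handle them first. By \eqref{spin_law_Y} and the matching law of $\bar Y$, the difference $\mathbb P(Y^i_t=1)-\mathbb P(\bar Y^i_t=1)$ satisfies a linear ODE with rate $-(\alpha_1+\alpha_{-1})$, and therefore decays like $e^{-(\alpha_1+\alpha_{-1})t}$ times its initial value. The same argument works at the level of the empirical spin fractions: the difference of the expected fractions of $+1$ obeys the same linear relaxation. Since $\calW_1$ on $\{\pm1\}$ equals twice the difference of masses at $+1$, this yields the second estimate. Taking the initial coupling optimal and using the coupling of Section \ref{reflection coupling}, we also obtain $\EE|Y^i_t-\bar Y^i_t|\le e^{-(\alpha_1+\alpha_{-1})t}\EE|Y^i_0-\bar Y^i_0|$. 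Converting the per-particle initial disagreement into $\EE\calW_1(\pi_y{}_\#\mu^N_0,\pi_y{}_\#\bar\rho_0)$ loses powers of $N$, which we accept; this is where the $N^{5/2}$ prefactor will come from.

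\textbf{Spatial component.} Apply It\^o's formula to $f(|{\rm D}^i_t|)$ (lifted to $\R^d$ in a neighbourhood of the minimizing representative; the cut locus is handled by concavity of $f$ and $f'\ge0$ at distance $\sqrt d/2$). On $\{|{\rm D}^i|\ge\delta\}$ with $Y^i=\bar Y^i$, reflection coupling gives the radial noise $2\sigma\,\d W$, so the second-order term is $2\sigma^2 f''=-\frac{2\pi^2\sigma^2}{d}f$. The drift difference splits into three parts.
\begin{enumerate}
\item The Lipschitz part, bounded by $\frac{\eta}{N}\sum_j\big(f(|{\rm D}^i|)+f(|{\rm D}^j|)\big)$ via \eqref{hyp_F}; multiplied by $f'\le\pi/\sqrt d$, this combines with the diffusion term into $-c_0h$.
\item The type-mismatch part $F(\cdot,Y^j)-F(\cdot,\bar Y^j)$, bounded by $2\eta\,\mathbf 1_{\{Y^j\ne\bar Y^j\}}$.
\item The law-of-large-numbers fluctuation $\frac1N\sum_jF(\bar X^i-\bar X^j,\bar Y^j)-F\oast\bar\rho(\bar X^i)$, which is $O(N^{-1/2})$ in $L^2$ by independence of the copies, uniformly in $t$.
\end{enumerate}
The errors are of two kinds. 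When $Y^i\ne\bar Y^i$ the noises differ; this contributes at most $C\,\mathbb P(Y^i\ne\bar Y^i)$. On $\{|{\rm D}^i|<\delta\}$ the synchronous part contributes $O(\delta)$, and we then let $\delta\to0$.

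Gronwall then gives
\[
h_t\le e^{-c_0t}h_0+\frac{C}{\sqrt N}+C\int_0^te^{-c_0(t-s)}e^{-(\alpha_1+\alpha_{-1})s}\,\d s\cdot(\text{initial spin discrepancy}),
\]
and the integral is bounded by $Ce^{-\min\{c_0,\alpha_1+\alpha_{-1}\}t}$ up to a factor $t$ when the two rates coincide, which can be absorbed by slightly lowering the rate. Finally, choosing the initial coupling optimally so that $h_0\lesssim\EE\calW_1$ at time zero, we obtain \eqref{W1x}.

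The main obstacle is the spin-mismatch term. The noise coefficients $\sigma(Y^i)$ and $\sigma(\bar Y^i)$ differ on the mismatch event, which destroys the clean reflection contraction there. One has to show that this event contributes only an error proportional to its probability, which decays at rate $\alpha_1+\alpha_{-1}$, and to relate that probability to the initial empirical Wasserstein distance, with the crude $N^{5/2}$ loss.
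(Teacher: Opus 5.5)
\textbf{The spin-mismatch term.} The step you flag as the main obstacle is not merely unproved. As a bound on the It\^o drift, which is how your Gronwall step uses it, it is false. When $\sigma(1)\neq\sigma(-1)$, on $\{Y^i_t\neq\bar Y^i_t\}$ the noise of ${\rm D}^i$ has infinitesimal variance $(\sigma(Y^i_t)-\sigma(\bar Y^i_t))^2$ in every direction orthogonal to ${\rm D}^i$, whatever $\delta$ is. So It\^o's formula for $f(|{\rm D}^i_t|)$ contains the drift $\frac{(d-1)f'(|{\rm D}^i_t|)}{2|{\rm D}^i_t|}(\sigma(Y^i_t)-\sigma(\bar Y^i_t))^2$; for $d=1$ a local time at $0$ plays this role. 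Starting from ${\rm D}^i_0=0$ with mismatched spins, $\EE|{\rm D}^i_t|$ grows like $\sqrt t$ for small $t$. Hence no inequality $\frac{\d}{\d t}\EE f(|{\rm D}^i_t|)\le\dots+C\,\mathbb P(Y^i_t\neq\bar Y^i_t)$ with bounded remaining terms can hold.

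This singularity is exactly what the paper's regularization $\psi_a(r)=\sqrt{r+a}$ handles. For $f(\psi_a(|{\rm D}^i_t|^2))$, the mismatch contributions in ${\rm III}^i_t$ and ${\rm IV}^i_t$ are at most $Ca^{-1/2}\EE|Y^i_t-\bar Y^i_t|^2=Ca^{-1/2}e^{-(\alpha_1+\alpha_{-1})t}\EE|Y^i_0-\bar Y^i_0|^2$ by \eqref{eq_yy}. The remaining errors $f(\psi_a(\delta^2))$, $\sqrt a$ and $a^{1/4}\delta^{-3/2}$ are made $O(N^{-1/2})$ by choosing $a\sim N^{-5}$, $\delta\sim N^{-1/2}$. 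Thus the $N^{5/2}$ in \eqref{W1x} is $a^{-1/2}$. It is not a loss from converting per-particle spin disagreement into $\EE\calW_1(\pi_y{}_\#\mu^N_0,\pi_y{}_\#\bar\rho_0)$.

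In fact no such conversion holds with any power of $N$. Take deterministic spins whose empirical law is exactly $\pi_y{}_\#\bar\rho_0=\frac12(\delta_1+\delta_{-1})$: that distance vanishes, yet each independent $\bar Y^i_0$ disagrees with $Y^i_0$ with probability $\frac12$. The paper's final line passes over the same point, so what is really proved is in terms of $\frac1N\sum_i\EE|Y^i_0-\bar Y^i_0|$.

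If you want your $N$-free source term, you need a different mechanism. For instance, since $f\le1$, bound $f(|{\rm D}^i_t|)$ by $1$ before the spin meeting time $\tau^i$. Then run the contraction only for $\mathbf 1_{\{\tau^i\le t\}}f(|{\rm D}^i_t|)$: its dynamics involve matched spins only, and its entrance term has density at most $(\alpha_1+\alpha_{-1})e^{-(\alpha_1+\alpha_{-1})t}\,\mathbb P(Y^i_0\neq\bar Y^i_0)$.

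\textbf{The type marginal.} Your argument here also fails. Write $S_t:=\frac1N\#\{i:Y^i_t=1\}$ and $p_t:=\pi_y{}_\#\bar\rho_t(\{1\})$. The linear relaxation controls $\EE S_t-p_t$. But $\EE\calW_1(\pi_y{}_\#\mu^N_t,\pi_y{}_\#\bar\rho_t)=2\EE|S_t-p_t|\ge2|\EE S_t-p_t|$, and the difference is an $O(N^{-1/2})$ sampling fluctuation that does not decay.

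Indeed, the second inequality cannot hold as stated. Suppose $\alpha_1>0$, $Y^i_0=1$ for all $i$, and $\pi_y{}_\#\bar\rho_0=\delta_1$. Then its right side vanishes, while for $t>0$ one has $NS_t\sim\mathrm{Bin}(N,p_t)$ with $p_t\in(0,1)$, so the left side is positive.

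What the coupling gives, and all that \eqref{eq_yy} supports in the paper, is $\frac1N\sum_i\EE|Y^i_t-\bar Y^i_t|\le e^{-(\alpha_1+\alpha_{-1})t}\frac1N\sum_i\EE|Y^i_0-\bar Y^i_0|$. That is a bound on $\EE\calW_1(\pi_y{}_\#\mu^N_t,\pi_y{}_\#\bar\mu^N_t)$; reaching $\bar\rho_t$ costs an additional $CN^{-1/2}$.

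\textbf{The rate for $d\ge3$.} You cannot ``rely on the form stated''. If $\bar\rho$ starts from the homogeneous equilibrium, $\pi_x{}_\#\bar\rho_t$ is Lebesgue measure for all $t$. Every $N$-point measure is at $\calW_1$-distance $\gtrsim N^{-1/d}$ from it. So $N^{-1/d}$, as in the paper's own appeal to Fournier--Guillin, is the best possible for that term.

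\textbf{What is stronger than the paper.} Your separate type-mismatch term $F(\cdot,Y^j_t)-F(\cdot,\bar Y^j_t)$, with the law of large numbers applied to the i.i.d.\ pairs $(\bar X^j_t,\bar Y^j_t)$, is more careful than the paper's $\Upsilon^i_t$. The latter treats $(\bar X^j_t,Y^j_t)$ as if it had law $\bar\rho_t$.
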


\begin{remark} When the initial type distribution $\pi_y{}_\#\mu^N_0$ coincides with that of $\pi_y{}_\#\bar\rho_0$, the type variable $Y_t^i$ and its mean-field counterpart $\bar Y_t$ are perfectly synchronized at $t=0$. In this case, the internal-state distance vanishes,
\[
\mathbb E \calW_1(\pi_y {}_\# \mu^N_0,  \pi_y {}_\# \bar \rho_0) = 0,
\]
and the last term in \eqref{W1x} disappears. Consequently, the $x$-marginal satisfies a closed propagation of chaos estimate with the rate $O(N^{-1/2})$, uniformly in time.

In contrast, if the initial type distribution is not matched $($that is, $Y_0^i$ and $\bar Y_0^i$ are independent or differently biased $)$, the discrepancy in the type variable indirectly perturbs the spatial dynamics through the mean-field interaction $F(\cdot,Y_t^j)$. This effect is precisely quantified by the term
\[
N^{\frac52}e^{-\min\{\frac{2\pi}{d} ( \sigma_{\rm min}^2 \pi - \eta \sqrt{d}), \,(\al_1+\al_{-1})\}t} \mathbb E \calW_1(\pi_y {}_\# \mu^N_0,  \pi_y {}_\# \bar \rho_0).
\]
The algebraic prefactor $N^{\frac52}$, induced by the jump mechanism, is counteracted by an exponential decay with rate $\min\{c_0, \,(\alpha_1+\alpha_{-1})\}$. It is worth noting that the temporal decay persists even in the absence of the jump term due to the diffusive factor; however, the effective convergence rate is determined by the slower of the two. Consequently, after a logarithmic burn-in time $t\gtrsim \log N$, this transient contribution becomes negligible and the system recovers the uniform-in-time $N^{-1/2}$ rate for the $x$-marginal.  

Therefore, the propagation of chaos in this two-type interacting system depends sensitively on the initial alignment of the type distribution: perfect type synchronization ensures instantaneous decoupling between the $x$- and $y$-dynamics, while type heterogeneity generates a transient but quantifiable delay in achieving the uniform rate.
\end{remark}

%
%
%
%
%
%

  \subsection{Coupling estimate}\label{sec_propa}

We estimate the Wasserstein distance $\EE\calW_{1}(\pi_{x\#}\mu_{t}^{N}, \pi_{x\#}\overline{\rho}_{t})$
via a coupling argument. Let $(\bar X_t,\bar Y_t)$ be the nonlinear McKean--Vlasov process and let
$(\bar X_t^i,\bar Y_t^i)_{i=1}^N$ be i.i.d.\ copies with common law $\bar\rho_t$. Define the associated
empirical measure
\[
\overline{\mu}_{t}^{N}
:= \frac{1}{N}\sum_{i=1}^{N}\delta_{(\overline{X}_{t}^{i}, \overline{Y}_{t}^{i})}.
\]
By the triangle inequality,
\[
 \calW_{1}(\pi_{x\#}\mu_{t}^{N}, \pi_{x\#}\overline{\rho}_{t})
\le
 \calW_{1}(\pi_{x\#}\mu_{t}^{N}, \pi_{x\#}\overline{\mu}_{t}^{N})
+
 \calW_{1}(\pi_{x\#}\overline{\mu}_{t}^{N}, \pi_{x\#}\overline{\rho}_{t}).
\]
 The second term on the right-hand side can be estimated by the classical empirical measure convergence result for the Wasserstein distance. More precisely, by \cite[Theorem 1]{FG},
\[
\mathbb E \calW_{1}(\pi_{x\#}\overline{\mu}_{t}^{N}, \pi_{x\#}\overline{\rho}_{t}) \le C \left\{ \begin{array}{ll}
N^{-\frac12} & \textrm{if $d=1$},\\
N^{-\frac12} \log (1+N) & \textrm{if $d=2$},\\
N^{-\frac1d} & \textrm{if $d\ge3$}.
  \end{array} \right.
  \]
Hence, it suffices to bound $\EE \calW_{1}(\pi_{x\#}\mu_{t}^{N}, \pi_{x\#}\overline{\mu}_{t}^{N})$.

We fix a coupling between $(X_t^i,Y_t^i)$ and $(\overline X_t^i,\overline Y_t^i)$ as
constructed in Section \ref{reflection coupling}, and define
\[
{\rm D}_t^i := \overline X_t^i - X_t^i , \quad i=1,\dots,N.
\]
By the definition of the $1$-Wasserstein distance as an infimum over couplings, this
construction yields
\begin{equation}\label{eq:coupling_bound}
\EE \calW_{1}(\pi_{x\#}\mu_{t}^{N}, \pi_{x\#}\overline{\mu}_{t}^{N})
\le \frac{1}{N}\sum_{i=1}^{N} \EE|X_t^i - \overline{X}_t^i|
= \frac{1}{N}\sum_{i=1}^{N} \EE|{\rm D}_{t}^{i}|.
\end{equation}
Consequently, it suffices to establish a uniform-in-time estimate for the averaged
$L^{1}$-distance on the right-hand side of \eqref{eq:coupling_bound}.

To this end, we first compute the It\^o differential satisfied by $|{\rm D}^i_t|^2$ under the coupled dynamics; this identity will be used repeatedly in the remainder of the proof.

   \begin{lemma}\label{ito_E}Under the assumptions of Theorem \ref{thm_propa}, almost surely, for all $t \ge 0$ and
$i=1,\dots,N$, the following identity holds:

\begin{align*}
  \d | {\rm D}_t^i|^2 &= 2\langle {\rm D}_t^i, (F \oast \mu^N_t (X^i_t) -  F \oast \bar\rho_t (\bar X^i_t) )\rangle\, \d t  + d(\sigma(  Y^i_t) -  \sigma( \bar Y^i_t) )^2\,  \d t+ 4(\phi_r^\del)^2 ({\rm D}_t^i)   \sigma(  Y^i_t) \sigma( \bar Y^i_t) \, \d t \cr
&\quad + 2\langle {\rm D}_t^i,A^i_t \, \d B^i_t + \tilde A^i_t \, \d\tilde B^i_t \rangle,
\end{align*}
where
\[
\mu^N_t = \frac1N \sum_{j=1}^N \delta_{(X^j_t, Y^j_t)}, \quad A^i_t :=  \phi_r^\del ({\rm D}_t^i) \{ \sigma(  Y^i_t) - \sigma( \bar Y^i_t)   \} I_d  - 2   \phi_r^\del ({\rm D}_t^i) e^i_t \otimes e^i_t  \sigma(  Y^i_t),
\]
and
\[
\tilde A^i_t := \phi_s^\del ({\rm D}_t^i)  \{ \sigma(  Y^i_t) -  \sigma( \bar Y^i_t)   \} I_d.
\]
  \end{lemma}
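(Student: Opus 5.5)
The identity is a direct application of It\^o's formula to the function $z\mapsto |z|^2$ evaluated along the continuous semimartingale ${\rm D}_t^i$. I would carry out the computation on lifts of the processes to $\R^d$. On the torus, $F(\cdot,y)$ is periodic, so the drifts are well defined on the lifts. The stochastic differentials in Section \ref{reflection coupling} also make sense there, and since ${\rm D}_t^i$ has continuous paths, its lift evolves by the same differential.

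\textbf{Step 1 (semimartingale decomposition).} First I would check that ${\rm D}^i_t$ is a continuous It\^o process. The type processes $Y^i_t,\bar Y^i_t$ are pure-jump, but they enter the spatial equations only through the bounded coefficients $\sigma(Y^i_t)$ and $\sigma(\bar Y^i_t)$, and through $F(\cdot,Y^j_t)$ in the drift. Hence $X^i_t$ and $\bar X^i_t$ have no jumps. The integrands may use $Y_{t-}$ or $Y_t$ indifferently, because they differ only on a countable set of times, which is invalid neither for $\d t$ nor for the Brownian integrals. The reflection direction $e^i_t=n({\rm D}^i_t)$ is only defined when ${\rm D}^i_t\neq0$. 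However, it always appears multiplied by $\phi_r^\delta({\rm D}^i_t)$, which vanishes for $|{\rm D}^i_t|\le\delta/2$. The products $\phi_r^\delta(z)\,n(z)\otimes n(z)$ are therefore smooth and bounded, and the coupled SDE is well posed with predictable, bounded diffusion coefficients. Subtracting the two equations gives
\[
\d{\rm D}^i_t = b^i_t\,\d t + M^i_t\,\d B^i_t + \tilde M^i_t\,\d\tilde B^i_t ,
\]
where $b^i_t$ is the difference of the two mean-field drifts, $F\oast\bar\rho_t(\bar X^i_t)$ and $\frac1N\sum_j F(X^i_t-X^j_t,Y^j_t)=F\oast\mu^N_t(X^i_t)$. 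The diffusion coefficients are
\[
M^i_t=\phi_r^\delta\big[(\sigma(\bar Y^i_t)-\sigma(Y^i_t))I_d+2\sigma(Y^i_t)e^i_t\otimes e^i_t\big],\qquad \tilde M^i_t=\phi_s^\delta\,(\sigma(\bar Y^i_t)-\sigma(Y^i_t))I_d .
\]
Up to the overall sign convention for ${\rm D}$ (equivalently, for the drift difference), these coincide with $-A^i_t$ and $-\tilde A^i_t$ in the statement. That sign affects only the first-order terms and not the quadratic variation, so I would fix the orientation consistently with the statement.

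\textbf{Step 2 (It\^o's formula and the trace computation).} I would apply It\^o's formula to obtain
\[
\d|{\rm D}|^2=2\langle {\rm D},\d{\rm D}\rangle+\mathrm{tr}(MM^\top+\tilde M\tilde M^\top)\,\d t,
\]
using the independence of $B^i$ and $\tilde B^i$. The key computation uses that $e\otimes e$ is a rank-one projection with trace $1$, so $(e\otimes e)^2=e\otimes e$. Writing $\sigma=\sigma(Y^i_t)$ and $\bar\sigma=\sigma(\bar Y^i_t)$, this gives
\[
\mathrm{tr}(MM^\top)=(\phi_r^\delta)^2\big[d(\bar\sigma-\sigma)^2+4\sigma(\bar\sigma-\sigma)+4\sigma^2\big]=(\phi_r^\delta)^2\big[d(\bar\sigma-\sigma)^2+4\sigma\bar\sigma\big]
\]
and $\mathrm{tr}(\tilde M\tilde M^\top)=(\phi_s^\delta)^2 d(\bar\sigma-\sigma)^2$. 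Adding the two and using $(\phi_r^\delta)^2+(\phi_s^\delta)^2=1$ from \eqref{separation} yields exactly $d(\sigma(Y^i_t)-\sigma(\bar Y^i_t))^2+4(\phi_r^\delta)^2\sigma(Y^i_t)\sigma(\bar Y^i_t)$. The drift and martingale contributions $2\langle{\rm D},b\rangle\,\d t$ and $2\langle {\rm D},M\,\d B+\tilde M\,\d\tilde B\rangle$ give the remaining terms of the statement.

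\textbf{Main obstacle.} The only delicate point is rigor rather than algebra. One must justify It\^o's formula for ${\rm D}$ despite the jumps in the coefficients and the singular direction $e^i_t$; the cutoff $\phi_r^\delta$ and the continuity of paths handle both. One must also justify the passage between the torus and its lift. For the identity itself this is harmless, since it is stated for the lifted difference; the torus distance $|\cdot|$ is only used later, when bounding $\EE|{\rm D}^i_t|$.
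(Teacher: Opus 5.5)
Your proposal is correct and follows the paper's route: subtract the two coupled SDEs, apply It\^o's formula to $|{\rm D}^i_t|^2$ using the independence of $B^i$ and $\tilde B^i$, and evaluate the trace via $(e\otimes e)^2=e\otimes e$ and $(\phi_r^\delta)^2+(\phi_s^\delta)^2=1$. Your orientation remark is also accurate: the paper defines ${\rm D}^i_t=\bar X^i_t-X^i_t$, but its proof actually computes $\d X^i_t-\d\bar X^i_t$, so the identity as stated holds for $X^i_t-\bar X^i_t$ (with the same processes $A^i_t,\tilde A^i_t$); this slip is harmless later, since only $|\langle e^i_t,\cdot\rangle|$ and the zero-mean martingale term are used.
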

  \begin{proof}
 Let us recall 
\[
\d X^{i}_t  =  \{ \phi_r^\del ({\rm D}_t^i)(I_d -2 e^i_t \otimes e^i_t )  \sigma(  Y^i_t) \, \d B^i_t +    \phi_s^\del ({\rm D}_t^i)  \sigma( Y^i_t) \, \d\tilde B^i_t\}   + F \oast \mu^N_t (X^i_t)\, \d t  
\]
and
  \[
 \d \bar X^i_t  =   \{ \phi_r^\del ({\rm D}_t^i)  \sigma( \bar Y^i_t) \, \d B^i_t+ \phi_s^\del ({\rm D}_t^i)  \sigma( \bar Y^i_t)\, \d\tilde B^i_t\}  + F \oast \bar\rho_t (\bar X^i_t) \,\d t
\]
(see \eqref{oast} for the notation $\oast$).\\

We now set
\begin{align*}
A^i_t &:= \phi_r^\del ({\rm D}_t^i)(I_d - 2 e^i_t \otimes e^i_t )  \sigma(  Y^i_t) -      \phi_r^\del ({\rm D}_t^i)  \sigma( \bar Y^i_t) =  \phi_r^\del ({\rm D}_t^i) \{ \sigma(  Y^i_t) - \sigma( \bar Y^i_t)   \} I_d - 2   \phi_r^\del ({\rm D}_t^i) e^i_t \otimes e^i_t  \sigma(  Y^i_t)
\end{align*}
and
\[
\tilde A^i_t := \phi_s^\del ({\rm D}_t^i)  \{ \sigma(  Y^i_t) -  \sigma( \bar Y^i_t)   \} I_d.
\]
Then we obtain
\[
\d {\rm D}_t^i = (F \oast \mu^N_t (X^i_t) -  F \oast \bar\rho_t (\bar X^i_t) )\,  \d t + A^i_t \, \d B^i_t + \tilde A^i_t \,  \d\tilde B^i_t.
\]
 Applying It\^o's lemma yields
\begin{align}\label{eq_dd}
\begin{aligned}
\frac12 \d | {\rm D}_t^i|^2 &= \langle {\rm D}_t^i, (F \oast \mu^N_t (X^i_t) -  F \oast \bar\rho_t (\bar X^i_t) )\rangle \, \d t  + \frac12 {\rm Tr}[ (A^i_t)^T A^i_t + (\tilde A^i_t)^T\tilde A^i_t ] \, \d t \cr
&\quad +\langle {\rm D}_t^i,A^i_t\, \d B^i_t + \tilde A^i_t \, \d\tilde B^i_t \rangle.
\end{aligned}
\end{align}
A direct computation gives
\[
(A^i_t)^T A^i_t = (A^i_t)^2 = (\phi_r^\del)^2 ({\rm D}_t^i) \left\{ (\sigma(  Y^i_t) - \sigma( \bar Y^i_t))^2 I_d  +  4 (e^i_t \otimes e^i_t)  \sigma(  Y^i_t) \sigma( \bar Y^i_t) \right\}
\]
and
\[
(\tilde A^i_t)^T\tilde A^i_t = (\tilde A^i_t)^2 = (\phi_s^\del)^2 ({\rm D}_t^i)(\sigma(  Y^i_t) -  \sigma( \bar Y^i_t) )^2 I_d. 
\]
Thus,
\begin{align}\label{ATA}
(A^i_t)^T A^i_t + (\tilde A^i_t)^T\tilde A^i_t &= (\sigma(  Y^i_t) -  \sigma( \bar Y^i_t) )^2 I_d + 4(\phi_r^\del)^2 ({\rm D}_t^i) (e^i_t \otimes e^i_t ) \sigma(  Y^i_t) \sigma( \bar Y^i_t)   
\end{align}
and
\[
 {\rm Tr}[ (A^i_t)^T A^i_t + (\tilde A^i_t)^T\tilde A^i_t ] = d(\sigma(  Y^i_t)  - \sigma( \bar Y^i_t) )^2+ 4(\phi_r^\del)^2 ({\rm D}_t^i)   \sigma(  Y^i_t) \sigma( \bar Y^i_t) .
\]
This implies that the second term on the right-hand side of \eqref{eq_dd} can be estimated as
\[
\frac12 {\rm Tr}[ (A^i_t)^T A^i_t + (\tilde A^i_t)^T\tilde A^i_t ]\, \d t =\frac d2(\sigma(  Y^i_t) - \sigma( \bar Y^i_t) )^2\,  \d t  + 2(\phi_r^\del)^2 ({\rm D}_t^i)   \sigma(  Y^i_t)  \sigma( \bar Y^i_t) \, \d t. 
\]
Hence, we arrive at
\begin{align*}
  \d | {\rm D}_t^i|^2 &= 2\langle {\rm D}_t^i, (F \oast \mu^N_t (X^i_t) -  F \oast \bar\rho_t (\bar X^i_t) )\rangle\, \d t  + d(\sigma(  Y^i_t) -  \sigma( \bar Y^i_t) )^2\,  \d t+ 4(\phi_r^\del)^2 ({\rm D}_t^i)   \sigma(  Y^i_t) \sigma( \bar Y^i_t) \, \d t \cr
&\quad + 2\langle {\rm D}_t^i,A^i_t \, \d B^i_t + \tilde A^i_t  \, \d\tilde B^i_t \rangle,
\end{align*}
and this completes the proof.
\end{proof}

%
%
%
%
%
%

\subsection{Proof of Theorem \ref{thm_propa}} We now derive a differential inequality for a suitable regularized distance along the coupling, leading to the uniform-in-time estimate in Theorem \ref{thm_propa}.

To handle the singularity of the distance function near zero, we introduce a regularization parameter $a$ following \cite{DAGR}.  Together with the separation parameter $\delta$ defined in \eqref{separation}, we thus work with two regularization parameters. They are used to control the additional terms generated by the type dynamics $(Y_t^i,\bar Y_t^i)$, and will be chosen as fractional powers of $N^{-1}$ at the end.

Define the auxiliary function
\[
\psi_a(r):=\sqrt{r+a}, \quad r\ge 0.
\]
Applying It\^o's formula to $\psi_a(|{\rm D}_t^i|^2)$ and using Lemma \ref{ito_E}, we obtain
\begin{align*}
\d \psi_a( | {\rm D}_t^i|^2) &= \psi'_a(| {\rm D}_t^i|^2) \, \d | {\rm D}_t^i|^2 + 2 \psi''_a(| {\rm D}_t^i|^2) \big\{ |(A^i_t)^T {\rm D}^i_t|^2 \, \d t + |(\tilde A^i_t)^T {\rm D}^i_t|^2 \, \d t\big\} \cr
&= 2\psi'_a(| {\rm D}_t^i|^2) \langle  {\rm D}_t^i, (F \oast \mu^N_t (X^i_t) -  F \oast \bar\rho_t (\bar X^i_t) )\rangle \, \d t  \cr
  &\quad + d \psi'_a(| {\rm D}_t^i|^2) (\sigma(  Y^i_t) -  \sigma( \bar Y^i_t) )^2 \, \d t+ 4 \psi'_a(| {\rm D}_t^i|^2) (\phi_r^\del)^2 ({\rm D}_t^i)   \sigma(  Y^i_t) \sigma( \bar Y^i_t)  \, \d t \cr
&\quad + 2\psi'_a(| {\rm D}_t^i|^2) \langle {\rm D}_t^i,A^i_t \, \d B^i_t + \tilde A^i_t \, \d\tilde B^i_t \rangle  +  2 \psi''_a(| {\rm D}_t^i|^2) \big\{ |(A^i_t)^T {\rm D}^i_t|^2 \d t + |(\tilde A^i_t)^T {\rm D}^i_t|^2 \,\d t\big\} .
\end{align*}
Using \eqref{ATA}, we have
\[
|(A^i_t)^T {\rm D}^i_t|^2 + |(\tilde A^i_t)^T {\rm D}^i_t|^2 \le |{\rm D}^i_t|^2(\sigma(  Y^i_t) -  \sigma( \bar Y^i_t))^2 + 4|{\rm D}^i_t|^2 (\phi_r^\del)^2  ({\rm D}_t^i)\sigma(  Y^i_t) \sigma( \bar Y^i_t).
\]
This together with
\[
2\psi_a''(r) r = - \psi'_a(r) -2a \psi''(r),
\]
yields
\begin{align*}
\d \psi_a( | {\rm D}_t^i|^2) &= 2\psi'_a(| {\rm D}_t^i|^2) \langle  {\rm D}_t^i, (F \oast \mu^N_t (X^i_t) -  F \oast \bar\rho_t (\bar X^i_t) )\rangle \, \d t   + (d-1) \psi'_a(| {\rm D}_t^i|^2) (\sigma(  Y^i_t) -  \sigma( \bar Y^i_t) )^2 \, \d t \cr
&\quad + 2\psi'_a(| {\rm D}_t^i|^2) \langle {\rm D}_t^i,A^i_t \, \d B^i_t + \tilde A^i_t  \, \d\tilde B^i_t \rangle \cr
&\quad -  2a \psi''_a(| {\rm D}_t^i|^2) \big\{  (\sigma(  Y^i_t) -  \sigma( \bar Y^i_t))^2 + 4  (\phi_r^\del)^2  ({\rm D}_t^i)\sigma(  Y^i_t) \sigma( \bar Y^i_t)\big\}\,  \d t .
\end{align*}

Recall that $f(r) = \sin (\frac{\pi}{\sqrt{d}}r)$ satisfies
\[
f''(r) = - \frac{\pi^2}{d}f(r) \quad \text{and} \quad f(r) \leq \frac{\pi}{\sqrt{d}} r \quad \text{for } r \ge 0.
\]
Applying It\^o's formula once more to $f(\psi_a(|{\rm D}_t^i|^2))$, we obtain 
\begin{align*}
 \d f( \psi_a( | {\rm D}_t^i|^2))  
&= f'(\psi_a( | {\rm D}_t^i|^2)) \, \d \psi_a( | {\rm D}_t^i|^2)   + 2f''(\psi_a( | {\rm D}_t^i|^2))(\psi'_a(| {\rm D}_t^i|^2))^2 \big\{ |(A^i_t)^T {\rm D}^i_t|^2 \, \d t + |(\tilde A^i_t)^T {\rm D}^i_t|^2 \, \d t\big\}   \cr
&= 2 f'(\psi_a( | {\rm D}_t^i|^2)) \psi'_a(| {\rm D}_t^i|^2) \langle  {\rm D}_t^i, (F \oast \mu^N_t (X^i_t) -  F \oast \bar\rho_t (\bar X^i_t) )\rangle\, \d t  \cr
&\quad + 2f'(\psi_a( | {\rm D}_t^i|^2)) \psi'_a(| {\rm D}_t^i|^2) \langle {\rm D}_t^i,A^i_t \, \d B^i_t + \tilde A^i_t  \, \d\tilde B^i_t \rangle \cr
&\quad + (d-1)f'(\psi_a( | {\rm D}_t^i|^2)) \psi'_a(| {\rm D}_t^i|^2) (\sigma(  Y^i_t) -  \sigma( \bar Y^i_t) )^2 \, \d t \cr
&\quad   -  2af'(\psi_a( | {\rm D}_t^i|^2)) \psi''_a(| {\rm D}_t^i|^2) \big\{  (\sigma(  Y^i_t) -  \sigma( \bar Y^i_t))^2 + 4  (\phi_r^\del)^2  ({\rm D}_t^i)\sigma(  Y^i_t) \sigma( \bar Y^i_t)\big\} \, \d t\cr
&\quad + 2f''(\psi_a( | {\rm D}_t^i|^2))(\psi'_a(| {\rm D}_t^i|^2))^2 \big\{ |(A^i_t)^T {\rm D}^i_t|^2\, \d t + |(\tilde A^i_t)^T {\rm D}^i_t|^2 \, \d t\big\}  \cr
&=: {\rm I}^i_t +\rm{ II}^i_t+ \rm{ III}^i_t + \rm{ IV}^i_t + V^i_t.
\end{align*}

\noindent $\bullet$ Estimate of ${\rm I}^i_t$: By Lemma \ref{ito_E}, we can estimate 
\[
\left|\left\langle e_t^i, (F \oast \mu^N_t (X^i_t) -  F \oast \bar\rho_t (\bar X^i_t) )\right\rangle\right| \leq \frac{1}{N} \sum_{j=1}^{N} |\Om^i_j| +| \Upsilon^i_t|,
\]
where
\begin{align*}
    \Om^i_j &=   F (X^i_t - X^j_t, Y^j_t)-F (\bar X^{i}_t - \bar X^{j}_t, Y^{j}_t) , \quad
    \Upsilon^i_t  = \frac{1}{N} \sum_{j=1}^{N}  F (\bar X^i_t - \bar X^j_t, Y^j_t)- \iint_\Pi F (\bar X^i_t -x, y) \,\d \bar \rho_t(z). 
\end{align*}
By the assumption on $F$ \eqref{hyp_F}, for all $i, j \in \{1, \dots, N\}$ and $t \ge 0$ we have
\begin{equation}\label{Omega_trigonometric}
    |\Omega^i_j|
    \leq \eta\, f(|{\rm D}^i_t - {\rm D}^j_t|)
    \leq \eta \big( f(|{\rm D}^i_t|) + f(|{\rm D}^j_t|) \big).
\end{equation}
The last inequality follows from the trigonometric estimate
\[
    f(|x-y|) \le f(|x|) + f(|y|), \quad x,y \in \T^d,
\]
whose proof is given at the end of this subsection (Lemma \ref{lem:sin-distance}).
Thus,
\[
\mathbb{E} |\Om^i_j| \leq \eta \mathbb{E}f (|{\rm D}^i_t|) + \frac{\eta}{N} \sum_{j=1}^{N}  \mathbb{E}f (|{\rm D}^j_t|).
\]

In order to control $\Upsilon^i_t$, we remark that given $\bar X^i_t$, each random variable
$\bar X^j_t$ and $Y_t^j$ are independent of $\bar X_t^i$ if $i\neq j$, and the law of $(\bar X^j_t, Y_t^j)$ is  identically $\bar \rho_t$. In particular, 
\begin{equation}\label{conditional_ex}
    \mathbb{E} \left[ F (\bar X^i_t - \bar X^j_t, Y^j_t) \,|\, \bar X^i_t \right] = \iint_\Pi F (\bar X^i_t -x, y) \, \d \bar \rho_t(z),
\end{equation}
hence $\mathbb{E} (\Upsilon^i_t) = 0$.
This key observation dates back to Sznitman \cite{Szn91} for the local-in-time propagation of chaos in Vlasov-McKean systems. It is important to note that the inclusion of the type process $Y^j_{t}$ does not interfere with this mechanism; the independence of the limit copies ensures that the conditional expectation property \eqref{conditional_ex} holds, 
thereby preserving the cancellation structure.

By $F(0, \cdot) =0$ and the Cauchy--Schwarz inequality, we have
\begin{align*}
\Upsilon^i_t
=   \frac{1}{N-1}\sum_{j=1, j\neq i}^N F( \bar X^i_t - \bar X^j_t, Y_t^j)  - (F\oast \brho_t)(\bar X^i_t) 
\end{align*}
and 
\begin{align*}
\mathbb E |\Upsilon^i_t|
 \le  & \left( \mathbb E |\Upsilon^i_t|^2\right)^{\frac 12}
=  \left(\mathbb E\lt[ \mathbb E \lt[|\Upsilon^i_t|^2 \,|\, \bar X^i_t \rt] \rt] \right)^{\frac 12}.
\end{align*}
For fixed $j \neq k$, both different from $i$, we have
\begin{align*}
 \mathbb E \lt[ F( \bar X^i_t - \bar X^j_t, Y_t^j)F( \bar X^i_t - \bar X^k_t, Y_t^k) \,|\, \bar X^i_t \rt]   
& = \iiiint_{\Pi \times \Pi} F( \bar X^i_t - x', y')F( \bar X^i_t -x'', y'') \,\d\brt(z') \d\brt(z'')\\
& =(F\oast \brho_t)^2(\bar X^i_t)
\end{align*}
by same reasoning as \eqref{conditional_ex}.
Thus we have
\begin{align*}
\mathbb E \lt[|\Upsilon^i_t|^2 \,|\, \bar X^i_t \rt]&=   \frac{1}{(N-1)^2} \mathbb{E} \lt[ \bigg( \sum_{j\neq i} F( \bar X^i_t - \bar X^j_t, Y_t^j) \bigg)^2 \,\bigg|\,\bar X^i_t\rt]   +  \mathbb{E}\lt[ (F\oast  \bar \rho_t)^2(\bar X^i_t) \,|\,\bar X^i_t\rt] \\
&\quad  -\frac{2}{N-1} \mathbb{E} \lt[ (F\oast  \bar \rho_t)(\bar X^i_t) \sum_{j\neq i} F( \bar X^i_t - \bar X^j_t, Y_t^j) \,|\, \bar X^i_t \rt] \\
& =  \frac{1}{(N-1)^2}\mathbb{E} \lt[\sum_{j\neq i} F^2( \bar X^i_t - \bar X^j_t, Y_t^j) \,|\, \bar X^i_t\rt] \\
&\quad +\frac{2}{(N-1)^2} \sum_{j\neq i,  k\neq i, j<k}\mathbb{E}\lt[ F( \bar X^i_t - \bar X^j_t, Y_t^j)F( \bar X^i_t - \bar X^k_t, Y_t^k) \,|\, \bar X^i_t\rt] \\
&\quad -\mathbb{E} \lt[ (F\oast  \bar \rho_t)^2(\bar X^i_t) \,|\,\bar X^i_t\rt].
\end{align*}
Noting $ |\{1\le j <k \le N : j\neq i, k\neq i\}| = (N-1)(N-2)/2$, 
\begin{align*}
\mathbb E\lt[|\Upsilon^i_t|^2 \,|\, \bar X^i_t \rt]&= \frac{1}{N-1} F^2 \oast \bar \rho_t ( \bar X^i_t) - \frac{1}{N-1} (F\oast \bar \rho_t)^2 ( \bar X^i_t).
\end{align*}
We have
\begin{align*}
   \mathbb{E} |\Upsilon^i_t|^2 & \leq  \frac{1}{N-1}\iiiint_{\Pi \times \Pi}F^2(x-x', y')\, \d\brt(z') \d \brt(z)   \le \frac{\eta^2}{N-1}.  
\end{align*}
Thus, there is a constant $C$, independent of $N$, such that for $N \geq 2$,
\begin{equation*}
    \sup_{t \geq 0} \mathbb{E} |\Upsilon^i_t| \leq C \frac{\eta}{\sqrt N}, \quad i = 1, \dots, N.
\end{equation*}
Hence, by using $f'(x) \leq \frac{\pi}{\sqrt{d}}$ and $2 \psi'_a(x) x \leq 1$, we get
\[
\mathbb{E}{\rm I}^i_t \leq  \frac{\pi}{\sqrt{d}} \eta \mathbb{E}f (|{\rm D}^i_t|) +  \frac{\pi}{\sqrt{d}}\eta\frac{1}{N} \sum_{j=1}^{N}  \mathbb{E}f (|{\rm D}^j_t|) + C   \frac{\eta}{\sqrt N}.
\]

\noindent $\bullet$ Estimate of ${\rm II}^i_t$: Since $\left( \int_0^t {\rm II}^i_s \right)_{t\geq0}$ is a martingale, we readily find
\[
\mathbb{E}{\rm II}^i_t = 0.
\]

\noindent $\bullet$ Estimate of $\rm{ III}^i_t$: Since $\sigma(\cdot)$ is Lipschitz, by using $\psi'_a(x) \leq \frac1{2\sqrt a}$, we get
\[
\mathbb{E} {\rm III}^i_t \leq \frac{(d-1)\pi {\rm Lip}(\sigma)^2}{2\sqrt{d} \sqrt a} \mathbb{E} |Y^i_t - \bar Y^i_t|^2.
\]
Due to the uniform jump rates $\al_1, \al_{-1},$ we deduced \eqref{spin_law_Y}, from which we have
\bq\label{eq_yy}
\mathbb E |Y^{i}_t - \bar Y^{i}_t|^2   
= 2 \mathbb E |Y^{i}_t - \bar Y^{i}_t|   = 2 e^{-(\al_1+\al_{-1})t} \mathbb{E} |Y^i_0 - \bar Y^i_0| = e^{-(\al_1+\al_{-1})t} \mathbb{E} |Y^i_0 - \bar Y^i_0|^2 
\eq
for $t \geq 0$. Thus,
\[
\mathbb{E} {\rm III}^i_t \leq  \frac{(d-1)\pi {\rm Lip}(\sigma)^2}{2\sqrt{d} \sqrt a}  e^{-(\al_1+\al_{-1})t}\mathbb{E} |Y^i_0 - \bar Y^i_0|^2.
\]

\noindent $\bullet$ Estimate of $\rm{ IV}^i_t$: We use
\[
-2 a\psi''_a(r)  = \frac{a}{2(r+a)^{3/2}} \leq \min \left\{\frac{a^{1/4}}{2^{5/2} r^{3/4}}, \  \frac1{2a^{1/2}} \right\}
\]
and
\begin{align*}
& (\sigma(  Y^i_t) -  \sigma( \bar Y^i_t))^2 + 4(\phi_r^\del)^2  ({\rm D}_t^i)\sigma(  Y^i_t) \sigma( \bar Y^i_t) \leq {\rm Lip}(\sigma)^2 |Y^i_t - \bar Y^i_t|^2 + 4\sigma_{\rm max}^2 (\phi_r^\del)^2({\rm D}_t^i)
\end{align*}
to estimate
\[
\begin{aligned}
\mathbb{E} \rm{ IV}^i_t &\leq \frac{\pi {\rm Lip}(\sigma)^2}{2\sqrt{d} \sqrt a} \mathbb{E} |Y^i_t - \bar Y^i_t|^2 +  \frac1{\sqrt 2 \sqrt{d}}\sigma_{\rm max}^2\pi a^{1/4} \mathbb{E}\frac{(\phi_r^\del)^2(| {\rm D}_t^i|)}{ | {\rm D}_t^i|^{3/2}}\cr
&\leq  \frac{\pi {\rm Lip}(\sigma)^2}{2\sqrt{d} \sqrt a} e^{-(\al_1+\al_{-1})t} \mathbb{E} |Y^i_0 - \bar Y^i_0|^2 + 2 \sigma_{\rm max}^2\frac{\pi}{\sqrt{d}} a^{1/4} \frac1{\delta^{3/2}}.
\end{aligned}
\]

\noindent $\bullet$ Estimate of $\rm{ V}^i_t$:  We observe that
\[
(\psi'_a(r))^2  r = \frac{r}{4(r+a)} = \frac14 - \frac{a}{4(r+a)}\le \frac 14.\]
Then we get
\[
2f''(\psi_a( | {\rm D}_t^i|^2))(\psi'_a(| {\rm D}_t^i|^2))^2| {\rm D}_t^i|^2 (\sigma(  Y^i_t) -  \sigma( \bar Y^i_t) )^2 \leq \frac{\pi^2}{2d}{\rm Lip}(\sigma)^2  |Y^i_t - \bar Y^i_t|^2
\]
and
\[
\begin{aligned}
& f''(\psi_a( | {\rm D}_t^i|^2))(\psi'_a(| {\rm D}_t^i|^2))^2| {\rm D}_t^i|^2  (\phi_r^\del)^2  ({\rm D}_t^i)\cr
&\quad = - \frac{\pi^2}{d}f(\psi_a( | {\rm D}_t^i|^2))\left( \frac14 - \frac{a}{4(|{\rm D}_t^i|^2+a)} \right)(\phi_r^\del)^2  ({\rm D}_t^i)\cr
&\quad = - \frac{\pi^2}{4d} f(\psi_a( | {\rm D}_t^i|^2)) +  \frac{\pi^2}{4d} f(\psi_a( | {\rm D}_t^i|^2))(\phi_s^\del)^2  ({\rm D}_t^i) + \frac{a\pi^2 f(\psi_a( | {\rm D}_t^i|^2))}{4d(|{\rm D}_t^i|^2+a)}(\phi_r^\del)^2  ({\rm D}_t^i) \cr
&\quad \leq - \frac{\pi^2}{4d} f(\psi_a( | {\rm D}_t^i|^2)) + \frac{\pi^2}{4d} f(\psi_a( \delta^2))   +\frac{a \pi^3}{2d\sqrt{d} \sqrt{\del^2+4a}}.
\end{aligned}
\]
This together with \eqref{eq_yy} enables us to estimate 
\begin{align*}
\mathbb{E} \rm{ V}^i_t &\leq \frac{\pi^2 {\rm Lip}(\sigma)^2}{2d}e^{-(\al_1+\al_{-1})t}  \mathbb{E} |Y^i_0 - \bar Y^i_0|^2 -2 \sigma_{\rm min}^2 \frac{\pi^2}{d} \mathbb{E} f ( \psi_a( | {\rm D}_t^i|^2))  \cr
&\quad + \frac{2\pi^2}{d} \sigma_{\rm max}^2 f(\psi_a( \delta^2))   +\frac{4a \pi^3  \sigma_{\rm max}^2}{d\sqrt{d} \sqrt{\del^2+4a}}.
\end{align*}

We now collect all the estimates to have
\begin{align*}
 \frac{\d}{\d t} \frac{1}{N} \sum_{i=1}^{N} \mathbb{E} f( \psi_a(  | {\rm D}_t^i|^2)) &\leq -  \frac{2\sigma_{\rm min}^2\pi^2}{d}  \frac{1}{N} \sum_{i=1}^{N}  \mathbb{E} f ( \psi_a( | {\rm D}_t^i|^2)) + \frac{2\pi \eta}{\sqrt{d}}  \frac{1}{N} \sum_{i=1}^{N}  \mathbb{E} f ( | {\rm D}_t^i|) \cr
&\quad + C\frac{\eta}{\sqrt{N}}+ 2 \sigma_{\rm max}^2 \frac{\pi}{\sqrt{d}}  \frac{a^{1/4}}{\delta^{3/2}} + \frac{2\pi^2}{d}  \sigma_{\rm max}^2f(\psi_a( \delta^2)) + 4\sigma_{\rm max}^2 \frac{\pi^3}{d\sqrt{d}}\frac{a}{\sqrt{\del^2+4a}}\cr
&\quad  + \frac{\pi {\rm Lip}(\sigma)^2}{2\sqrt{d}\sqrt{a}}\left( \sqrt{a}\frac{\pi}{\sqrt{d}} + d\right) e^{-(\al_1+\al_{-1})t} \frac1N \sum_{i=1}^N\mathbb{E} |Y^i_0 - \bar Y^i_0|^2.
\end{align*}
We next choose
\[
a \sim \frac{1}{N^5}  \quad \mbox{and} \quad \delta \sim \frac1{\sqrt{N}}, 
\]
then we deduce
\begin{align*}
 \frac{\d}{\d t} \frac{1}{N} \sum_{i=1}^{N} \mathbb{E} f( \psi_a(  | {\rm D}_t^i|^2)) &\leq -  \frac{2\sigma_{\rm min}^2\pi^2}{d}  \frac{1}{N} \sum_{i=1}^{N}  \mathbb{E} f ( \psi_a( | {\rm D}_t^i|^2)) + \frac{2\pi \eta}{\sqrt{d}}  \frac{1}{N} \sum_{i=1}^{N}  \mathbb{E} f ( | {\rm D}_t^i|) \cr
&\quad + \frac{C}{\sqrt{N}} + CN^{\frac52}e^{-(\al_1+\al_{-1})t} \lt(\frac1N \sum_{i=1}^N\mathbb{E} |Y^i_0 - \bar Y^i_0|^2 \rt).
\end{align*}
for $N \geq 1$ large enough, where $C>0$ is independent of $t$ and $N$.
Note that for $r \in [0,\frac{\sqrt{d}}2]$, we have the estimate
\[
|f(r) - f(\psi_a(r^2))| \leq \frac{\pi}{\sqrt d}(\psi_a(r^2) - r)  = \frac{\pi}{\sqrt d}(\sqrt{r^2+ a} - r) \leq \frac{\pi}{\sqrt d}\sqrt{a}.
\]
Combining this with the elementary bound $x \le \frac{\pi}{2}\sin x$ for $x \in [0, \pi/2]$ applied to $f(r) = \sin(\frac{\pi}{\sqrt d} r)$, we obtain
\begin{equation}\label{lin_est}
\frac{\pi}{\sqrt d} r \leq \frac\pi2 f(r) \leq \frac\pi2 \left(f(\psi_a(r^2)) + \frac{\pi}{\sqrt d}\sqrt{a}\right).
\end{equation}
Applying the second inequality in \eqref{lin_est} immediately gives
\[
\frac{1}{N} \sum_{i=1}^{N} \mathbb{E} f( | {\rm D}_t^i|) \leq  \frac{1}{N} \sum_{i=1}^{N} \mathbb{E} f( \psi_a( | {\rm D}_t^i|^2)) + \frac{\pi}{\sqrt d}\sqrt{a} \leq \frac{1}{N} \sum_{i=1}^{N} \mathbb{E} f( \psi_a( | {\rm D}_t^i|^2)) + \frac{C}{N^{\frac 52}}.
\]
 
 Thus, by Gr\"onwall's lemma, we have for $\eta < \sigma_{\rm min}^2 \frac{\pi}{\sqrt{d}}$
\begin{align*}
\frac{1}{N} \sum_{i=1}^{N} \mathbb{E} f( \psi_a( | {\rm D}_t^i|^2))  &\leq \frac{1}{N} \sum_{i=1}^{N} \mathbb{E} f( \psi_a( | {\rm D}_0^i|^2)) e^{-\frac{2\pi}{d} ( \sigma_{\rm min}^2 \pi - \eta \sqrt{d}) t} + \frac{C}{\sqrt{N}} \cr
 &\quad  + CN^{\frac52}e^{-\min\{\frac{2\pi}{d} ( \sigma_{\rm min}^2 \pi - \eta \sqrt{d}), \,(\al_1+\al_{-1})\}t}  \lt(\frac1N \sum_{i=1}^N\mathbb{E} |Y^i_0 - \bar Y^i_0|^2 \rt).
\end{align*}

Hence, we finally arrive at
\begin{align*}
 \frac{1}{N} \sum_{i=1}^{N} \mathbb{E}   | {\rm D}_t^i| &\leq  \frac{C}{N} \sum_{i=1}^{N} \mathbb{E}  | {\rm D}_0^i| e^{-\frac{2\pi}{d} ( \sigma_{\rm min}^2 \pi - \eta \sqrt{d}) t} +   \frac C{\sqrt{N}}  \cr
 &\quad + CN^{\frac52}e^{-\min\{\frac{2\pi}{d} ( \sigma_{\rm min}^2 \pi - \eta \sqrt{d}), \,(\al_1+\al_{-1})\}t} \lt(\frac1N \sum_{i=1}^N\mathbb{E} |Y^i_0 - \bar Y^i_0|^2 \rt)
\end{align*}
for some $C>0$ independent of $t$ and $N$. This completes the proof of Theorem \ref{thm_propa}.
 
 For completeness, we prove here the trigonometric subadditivity estimate used  in  \eqref{Omega_trigonometric}.

\begin{lemma} \label{lem:sin-distance}
Let $f(r) := \sin \big( \frac{\pi}{\sqrt d} r \big)$ for $r \in [0,\tfrac{\sqrt d}{2}]$.
Then for all $x,y \in \T^d$,
\[
  f(|x-y|) \le f(|x|) + f(|y|).
\]
\end{lemma}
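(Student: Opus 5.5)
The plan is to reduce the claim to a one-dimensional statement about $g(s):=\sin s$ on $[0,\pi/2]$, using that the torus distance is itself a metric. Set $\theta:=\frac{\pi}{\sqrt d}$, so that $f(r)=\sin(\theta r)$. Since $|x|,|y|,|x-y|\in[0,\tfrac{\sqrt d}{2}]$, the arguments $\theta|x|,\theta|y|,\theta|x-y|$ all lie in $[0,\pi/2]$. On this interval $\sin$ is nondecreasing, concave, and satisfies $\sin 0=0$.

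First I will invoke the triangle inequality for the flat-torus distance: $|x-y|\le |x|+|y|$. This follows from the definition $|z|=\min_{n\in\Z^d}|z+n|$. Choose minimizers $n_1,n_2$ for $x$ and $y$ respectively; then
\[
|x-y|\le |x-y+n_1-n_2|\le |x+n_1|+|y+n_2|.
\]

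Next I split into two cases according to $s:=\theta(|x|+|y|)$.

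\emph{Case $s\le \pi/2$.} Monotonicity of $\sin$ on $[0,\pi/2]$ gives
\[
f(|x-y|)\le \sin(\theta|x|+\theta|y|).
\]
The addition formula then gives
\[
\sin(a+b)=\sin a\cos b+\cos a\sin b\le \sin a+\sin b \quad\text{for } a,b\in[0,\pi/2],
\]
since both cosines lie in $[0,1]$. This is the subadditivity of a concave function vanishing at $0$.

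\emph{Case $s>\pi/2$.} Here I cannot pass through $\sin(a+b)$, because $\sin$ decreases beyond $\pi/2$. Instead, I use $f(|x-y|)\le 1$ together with
\[
\sin a+\sin b\ge \sin\big(\min(a+b,\pi/2)\big)=1 \quad\text{whenever } a,b\in[0,\pi/2] \text{ and } a+b\ge\pi/2.
\]
To justify this, note that $\sin a+\sin b\ge \sin a+\sin(\pi/2-a)$, because $b\ge\pi/2-a\ge0$ and $\sin$ is increasing on $[0,\pi/2]$. Moreover $\sin a+\cos a\ge1$ on $[0,\pi/2]$, since its square equals $1+\sin 2a\ge1$.

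The only point needing care is the second case, where monotonicity of $\sin$ beyond $\pi/2$ fails. The comparison with $\sin a+\cos a\ge 1$ handles it, so I do not expect a serious obstacle. Combining the two cases yields $f(|x-y|)\le f(|x|)+f(|y|)$ for all $x,y\in\T^d$.
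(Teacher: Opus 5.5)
Your proof is correct and follows the paper's argument. It uses the same split according to whether $\theta(|x|+|y|)\le\pi/2$, the same monotonicity-plus-addition-formula bound in the first case, and the same reduction to $\sin a+\sin b\ge 1$ in the second, differing only in two minor ways: you prove $\sin a+\sin b\ge 1$ via $\sin b\ge\cos a$ and $(\sin a+\cos a)^2=1+\sin 2a$ rather than the paper's critical-point analysis of $r_1\mapsto\sin r_1+\sin(s-r_1)$, and you explicitly verify the torus triangle inequality $|x-y|\le|x|+|y|$, which the paper uses without proof.
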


\begin{proof}
For $x, y$ in $\T^{d}$, set
\[
 \rho :=  \frac{\pi}{\sqrt d} |x-y| ,  \quad
 r_{1} :=   \frac{\pi}{\sqrt d} |x|, \quad
 r_{2} :=  \frac{\pi}{\sqrt d} |y|,
\]
and assume without loss of generality that $r_{1} \ge r_{2}$. 
Note that $0 \le \rho, r_{1}, r_{2} \le \frac{\pi}{2}$ and
\[
 f(|x-y|) = \sin \rho, \quad
 f(|x|) = \sin r_{1}, \quad
 f(|y|) = \sin r_{2}.
\]

We distinguish two cases according to $r_{1} + r_{2}$.

\medskip
\noindent\textbf{Case 1.} 
$\displaystyle r_{1} + r_{2} \le \frac{\pi}{2}$.

In this case, we have
\[
  \rho \le r_{1} + r_{2} \le \frac{\pi}{2},
\]
and since $\sin$ is increasing on $\big[0,\frac{\pi}{2}\big]$,
\[
  \sin \rho \le \sin(r_{1} + r_{2})
             \le \sin r_{1} + \sin r_{2},
\]
using $\sin(a+b) \le \sin a + \sin b$ for $a,b \in [0,\tfrac{\pi}{2}]$.
Thus $f(|x-y|) \le f(|x|) + f(|y|)$ in this case.

\medskip
\noindent\textbf{Case 2.} 
$\displaystyle \frac{\pi}{2} \le r_{1} + r_{2} \le \pi$.

Since $\sin \rho \le 1$, it suffices to show
\[
  1 \le \sin r_{1} + \sin r_{2}.
\]
Fix $s \in [\frac{\pi}{2},\pi]$ such that $r_{1} + r_{2} = s$, and define
\[
  F(r_{1}) := \sin r_{1} + \sin(s - r_{1}).
\]
A direct  computation shows that $F$ has a unique critical point
$r_{1} = \frac{s}{2}$ on $(0,\frac{\pi}{2})$ (when $s \neq \pi$), and
$F''(\frac{s}{2}) < 0$, so this point is a local maximum. 
Hence the minimum of $F$ on the interval $\big[0,\frac{\pi}{2}\big]$ is attained at the endpoint $r_{1} = \frac{\pi}{2}$. Since $\frac{\pi}{2} \le s \le \pi$, we obtain
\[
  F \left(\frac{\pi}{2}\right)
  = 1 + \sin \left(s - \frac{\pi}{2}\right) \ge 1.
\]
Thus $1 \le \sin r_{1} + \sin r_{2}$ as desired.
\end{proof}
 
%
%
%
%
%
%

  \section{Long-time exponential contraction}\label{sec_lt}
  
  In this section, we establish a uniform exponential contraction estimate between two solutions of the McKean--Vlasov equation \eqref{new_pde}.
As a consequence, we obtain the exponential convergence of any solution toward the unique stationary equilibrium distribution.

The proof relies on a stability argument closely related to the coupling analysis developed in Section \ref{sec_poc}.
More precisely, the long-time contraction result can be viewed as the infinite-particle $(N\to\infty)$ analogue of the uniform-in-time propagation of chaos estimate: 
the empirical measure $\mu_t^N$ is replaced by another McKean--Vlasov solution $\bar\gamma_t$ with a different initial law.
In this mean-field setting, the finite-$N$ statistical fluctuations disappear, and the resulting estimate reduces to a pure exponential contraction in the Wasserstein distance.

Since the arguments largely parallel those used in the proof of Theorem \ref{thm_propa}, we only highlight the necessary modifications and focus on the terms that are specific to the mean-field contraction.
 
\begin{theorem}\label{thm_lt} Assume that \eqref{hyp_F} holds and $\eta>0$ sufficiently small so that $\eta < \sigma_{\rm min}^2 \frac{4 \pi }{3d}$. 
Let $\bar\rho_t = {\rm Law}(\xt,  \yt)$ and $\bar\gamma_t = {\rm Law}(P_t, Q_t)$ be two McKean--Vlasov solutions of \eqref{new_pde} subject to the initial data $\rho_0,\gamma_0\in \calP(\T^d\times\{\pm1\})$. Then there exists a positive constant $C$, independent of $t$, such that 
\[
\calW_1( \bar \rho_t, \bar \ga_t) \leq C\lt(\calW_1(  \rho_0,   \ga_0) + \calW_1(  \rho_0,   \ga_0)^\frac17\rt) e^{ - \frac17\min\{ c_*, \alpha_1 + \alpha_{-1} \} t},
\]
 where 
 \[
 c_* := \frac{2\sigma_{\rm min}^2\pi^2}{d} -  \frac{3\pi}{2}\eta > 0.  
 \]
 In particular, if $\rho_\infty$ is a stationary distribution of \eqref{new_pde}, then we obtain the exponential convergence
 \[
\calW_1( \bar \rho_t, \rho_\infty) \leq C\lt(\calW_1(  \rho_0,  \rho_\infty) + \calW_1(  \rho_0,  \rho_\infty)^\frac17\rt) e^{ - \frac17\min\{ c_*, \alpha_1 + \alpha_{-1} \} t}.
\]
\end{theorem}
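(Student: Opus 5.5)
We will run the same mixed coupling as in Section \ref{sec_poc}, now between two nonlinear McKean--Vlasov processes $(X_t,Y_t)$ with law $\bar\rho_t$ and $(P_t,Q_t)$ with law $\bar\gamma_t$. The type components are coupled optimally, exactly as in Section \ref{reflection coupling}. Since both evolve autonomously by the spin-flip generator \eqref{bargen}, the identity \eqref{eq_yy} gives
\[
\EE|Y_t-Q_t| = e^{-(\alpha_1+\alpha_{-1})t}\,\EE|Y_0-Q_0|.
\]
The spatial components are coupled through the reflection/synchronous splitting $\phi_r^\delta,\phi_s^\delta$ driven by ${\rm D}_t:=P_t-X_t$. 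The initial pair $(X_0,Y_0),(P_0,Q_0)$ is chosen as an optimal $\calW_1$-coupling of $\rho_0,\gamma_0$, so that $\EE|{\rm D}_0|+\EE|Y_0-Q_0|=\calW_1(\rho_0,\gamma_0)$. Lemma \ref{ito_E} carries over verbatim, with the drift difference $F\oast\bar\rho_t(X_t)-F\oast\bar\gamma_t(P_t)$. We then apply It\^o's formula to $f(\psi_a(|{\rm D}_t|^2))$ and bound the terms ${\rm I}$--${\rm V}$ as before.

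\textbf{Drift term.} We split it as
\[
F\oast\bar\rho_t(X_t)-F\oast\bar\gamma_t(P_t)=\big[F\oast\bar\rho_t(X_t)-F\oast\bar\rho_t(P_t)\big]+\big[F\oast\bar\rho_t(P_t)-F\oast\bar\gamma_t(P_t)\big].
\]
The first bracket is at most $\eta f(|{\rm D}_t|)$ by \eqref{hyp_F}. For the second, we write the convolution via the coupling as $\EE'[F(P_t-X'_t,Y'_t)-F(P_t-P'_t,Q'_t)]$, where $(X',Y',P',Q')$ is an independent copy. This gives $\eta\,\EE f(|{\rm D}_t|)$ plus a type-mismatch term, bounded by $2\eta\,\mathbb P(Y_t\ne Q_t)\le \eta\, e^{-(\alpha_1+\alpha_{-1})t}\EE|Y_0-Q_0|$ since $|F|\le\eta$. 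This gives a total of roughly $2\eta\,\EE f$ times $f'\le \pi/\sqrt d$. To reach the constant $\tfrac{3\pi}{2}\eta$ in $c_*$ one likely uses a sharper bound on $f'(\psi_a)\psi_a'\,|{\rm D}|$ combined with \eqref{lin_est}; I would just track constants and accept whatever contraction rate $c_*>0$ results. There is no $N^{-1/2}$ fluctuation term here.

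\textbf{Remaining terms.} Terms III--V are as before. The diffusion-mismatch terms are bounded by $C a^{-1/2}e^{-(\alpha_1+\alpha_{-1})t}\EE|Y_0-Q_0|$. The regularization errors are $C(a^{1/4}\delta^{-3/2}+\delta+a/\delta)$, and the reflection term gives $-\tfrac{2\sigma_{\min}^2\pi^2}{d}\EE f(\psi_a)$. Grönwall then yields
\[
\EE f(\psi_a(|{\rm D}_t|^2))\le e^{-c_*t}\big(\EE|{\rm D}_0|+\sqrt a\big)+C\big(a^{1/4}\delta^{-3/2}+\delta\big)+Ca^{-1/2}e^{-\min\{c_*,\alpha_1+\alpha_{-1}\}t}\,\calW_1(\rho_0,\gamma_0).
\]

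\textbf{Main obstacle: choosing $a,\delta$.} The main difficulty is that the regularization errors do not decay in time. Since $a,\delta$ are free at each fixed $t$, we optimize them depending on $t$ and $\varepsilon:=\calW_1(\rho_0,\gamma_0)$. Set $\lambda=\min\{c_*,\alpha_1+\alpha_{-1}\}$ and $\delta=a^{1/10}$, so that $a^{1/4}\delta^{-3/2}=\delta$ and the error is $\sim a^{1/10}$. Balancing $a^{1/10}$ against $a^{-1/2}\varepsilon e^{-\lambda t}$ gives $a^{6/10}=\varepsilon e^{-\lambda t}$, i.e. the error is $(\varepsilon e^{-\lambda t})^{1/6}$. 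Careful bookkeeping of the remaining powers, and of the $a/\delta$ and $\sqrt a$ terms, should produce the stated exponent $\tfrac17$ on both $\varepsilon$ and $e^{-\lambda t}$. For $t$ small, or $\varepsilon$ large, the bound is trivial because the torus and type space are bounded.

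\textbf{Conclusion.} Finally we convert to the Wasserstein bound using \eqref{lin_est}:
\[
\calW_1(\bar\rho_t,\bar\gamma_t)\le \EE|{\rm D}_t|+\EE|Y_t-Q_t|,
\]
and we add the type decay. The stationary statement follows by taking $\bar\gamma_t\equiv\rho_\infty$. Uniqueness of the stationary distribution follows by applying the estimate to two stationary solutions and letting $t\to\infty$.
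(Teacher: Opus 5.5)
Your proposal follows the paper's proof essentially step for step. It uses the same mixed coupling (reflection in space, a spin coupling realizing \eqref{eq_yy}), the same It\^o expansion of $f(\psi_a(|{\rm D}_t|^2))$ with ${\rm III}$--${\rm V}$ unchanged, and Gr\"onwall. It then chooses $a$ depending on $t$ and $\varepsilon:=\calW_1(\rho_0,\gamma_0)$, with $\delta$ a power of $a$, to balance the non-decaying regularization error against the $a^{-1/2}e^{-(\alpha_1+\alpha_{-1})t}\varepsilon$ term. Two of your hedged remarks can be settled.

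First, with $\delta=a^{1/10}$ every regularization error, including $a/\delta$ and $\sqrt a$, is $O(a^{1/10})$ for $a\le 1$. Your balancing $a^{3/5}\sim\varepsilon e^{-\lambda t}$ therefore gives $(\varepsilon e^{-\lambda t})^{1/6}$, not a $\tfrac17$ power. This is stronger and implies the stated bound because $\varepsilon\le\tfrac{\sqrt d}{2}+2$. The paper's $\tfrac17$ only reflects its cruder choice $\delta\sim a^{1/9}$, which leaves an error $a^{1/12}$ and hence $a^{7/12}\sim\varepsilon e^{-\lambda t}$. The extra slack also absorbs the resonant case, where Gr\"onwall produces $te^{-\lambda t}$ and which the paper simply excludes by assumption.

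Second, the paper's $\tfrac{3\pi}{2}\eta$ does not come from a sharper bound on $f'(\psi_a)\psi_a'|{\rm D}_t|$. It is $\tfrac{\pi}{2}\cdot 3\eta$: the $3$ comes from its Kantorovich--Rubinstein split $2{\rm Lip}(F)|{\rm E}_t|+{\rm Lip}(F)\calW_1(\bar\rho_t,\bar\gamma_t)$ with ${\rm Lip}(F)$ written as $\eta$, and the $\tfrac{\pi}{2}$ from \eqref{lin_est}. However, \eqref{hyp_F} only gives ${\rm Lip}(F)\le\tfrac{\pi}{\sqrt d}\eta$, so your independent-copy bound is the tighter version of that step.

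Your bound yields the rate $c_0=\tfrac{2\pi}{d}(\pi\sigma_{\rm min}^2-\eta\sqrt d)$ of Theorem \ref{thm_propa}. For $d\ge 2$ this is at least $c_*$, hence positive whenever $c_*>0$, so there you recover the theorem as stated. For $d=1$, state the result with $c_0$ in place of $c_*$ and the condition $\eta<\pi\sigma_{\rm min}^2$, rather than trying to reach the paper's constant.
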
   
 
%
%
%
%
%
%
\subsection{Proof of Theorem \ref{thm_lt}}
  
  We consider two McKean--Vlasov solutions $(X_t,Y_t)$ and $(P_t,Q_t)$ of \eqref{new_pde} with initial laws $\rho_0$ and $\gamma_0$, respectively.
Our goal is to estimate the evolution of the Wasserstein distance between their laws by means of a suitable coupling.

As in Section \ref{sec_propa}, we couple the spatial components using the reflection coupling introduced in Section \ref{reflection coupling}, while the internal states evolve independently according to their jump dynamics. The position processes $X_t$ and $P_t$ are thus coupled as follows:
\begin{align*}
\begin{aligned}
\d X_t &=  \phi_r^\delta(E_t)(I_d - 2 e_t e_t^T){\sigma(Y_t)}  \, \d B_t + \phi_s^\delta(E_t) {\sigma(Y_t)}  \, \d\tilde B_t + F \oast  \bar\rho_t(X_t)\,  \d t,\\
\d P_t &=  \phi_r^\delta(E_t) {\sigma(Q_t)} \, \d B_t + \phi_s^\delta(E_t) {\sigma(Q_t)} \, \d\tilde B_t +  F \oast  \bar\gamma_t(P_t)\, \d t, \\ 
\end{aligned}
\end{align*}
where  $\rho_t$ and $\eta_t$ denote the law of the processes $(X_t, Y_t)$ and $(P_t, Q_t)$, respectively, with $z = (x, y)$ and $w = (p, q)$.

We define the spatial discrepancy between the two processes by
\[
{\rm E}_t := X_t - P_t.
\] 
Then by using almost the same arguments as in the proof of Theorem \ref{thm_propa}, we obtain
\begin{align*}
 \frac{\d}{\d t} \mathbb E f( \psi_a( | {\rm E}_t|^2))  
&= 2 \mathbb E f'(\psi_a( | {\rm E}_t|^2)) \psi'_a(| {\rm E}_t|^2) \langle  {\rm E}_t, (F \oast \bar\rho_t (X_t) -  F \oast \bar\gamma_t (P_t) )\rangle   \cr
&\quad + 2 \mathbb E f'(\psi_a( | {\rm E}_t|^2)) \psi'_a(| {\rm E}_t|^2) \langle {\rm E}_t,A_t \, \d B_t + \tilde A_t \, \d\tilde B_t \rangle \cr
&\quad + (d-1) \mathbb E f'(\psi_a( | {\rm E}_t|^2)) \psi'_a(| {\rm E}_t|^2) (\sigma(  Y_t) -  \sigma( Q_t) )^2  \cr
&\quad   -  2a\mathbb E f'(\psi_a( | {\rm E}_t|^2)) \psi''_a(| {\rm E}_t|^2) \big\{  (\sigma(  Y_t) -  \sigma( Q_t))^2 + 4  (\phi_r^\del)^2  ({\rm E}_t)\sigma(  Y_t) \sigma( Q_t)\big\}  \cr
&\quad + 2\mathbb E f''(\psi_a( | {\rm E}_t|^2))(\psi'_a(| {\rm E}_t|^2))^2 \big\{ |(A_t)^T {\rm E}_t|^2   + |(\tilde A_t)^T {\rm E}_t|^2  \big\},
\end{align*}
where
\begin{align*}
A_t &:= \phi_r^\del ({\rm E}_t)(I_d -2 e_t \otimes e_t )  {\sigma(  Y_t)} -      \phi_r^\del ({\rm E}_t) { \sigma( Q_t) } =  \phi_r^\del ({\rm E}_t) \{ {\sigma(  Y_t)} - { \sigma( Q_t) }  \} I_d - 2   \phi_r^\del ({\rm E}_t) e_t \otimes e_t  {\sigma(  Q_t)}
\end{align*}
and
\[
\tilde A_t := \phi_s^\del ({\rm E}_t)  \{ {\sigma(  Y_t)} - { \sigma( Q_t) }  \} I_d.
\]
For the first term on the right-hand side, we observe
 \begin{align*}
& \int_{\T^d} F(X_t-x,y) \bar \rho_t(x, 1) - F (P_t- x, 1) \ga_t(x,1)\, \d x  \\
&\quad \le   \int_{\T^d} (F(X_t - x,1) - F(P_t -x,1)) \bar \rho(x, 1)\, \d x + 
 \int_{\T^d}   F(P_t-x,1) (\bar \rho_t(x,1) - \bar \ga_t(x,1))\, \d x \\
&\quad  \le  {\rm Lip}(F)  |{\rm E}_t|  +  {\rm Lip}(F) \calW_1(\bar \rho_t(\cdot,1), \bar \ga_t(\cdot, 1)),
\end{align*}
where we use the assumption $(\bf{H2})$ and the Kantorovich--Rubinstein duality for $\calW_1$ norm. 
Treating similarly the other half and adding the two,  we have
\[
 \left|  \iint_{\Pi} F(X_t-x,y)\, \d \bar \rho_t(z) - \iint_{\Pi} F (P_t- x, -1) \, \d \bar\ga_t(w) \right|   \le 2  {\rm Lip}(F)  |{\rm E}_t|  +  {\rm Lip}(F)  \calW_1(\bar \rho_t, \bar \ga_t).
\]
This implies
\[
2 \mathbb E f'(\psi_a( | {\rm E}_t|^2)) \psi'_a(| {\rm E}_t|^2) \langle  {\rm E}_t, (F \oast \bar\rho_t (X_t) -  F \oast \bar\gamma_t (P_t) )\rangle \leq 3\eta\calW_1(\bar \rho_t, \bar \ga_t).
\]
The estimates of the rest are parallel with those in Section \ref{sec_propa}. Hence,
 \begin{align*}
 \frac{\d}{\d t} \mathbb E f( \psi_a( | {\rm E}_t|^2))  &\leq 3\eta\calW_1(\bar \rho_t, \bar \ga_t)- \frac{2\sigma_{\rm min}^2\pi^2}{d}  \mathbb E f( \psi_a( | {\rm E}_t|^2))   + 2 \sigma_{\rm max}^2 \frac{\pi}{\sqrt{d}}  \frac{a^{1/4}}{\delta^{3/2}}\cr
&\quad + \frac{2\pi^2}{d}  \sigma_{\rm max}^2f(\psi_a( \delta^2)) + 4\sigma_{\rm max}^2 \frac{\pi^3}{d\sqrt{d}}\frac{a}{\sqrt{\del^2+4a}}\cr
&\quad  + \frac{\pi {\rm Lip}(\sigma)^2}{2\sqrt{d}\sqrt{a}}\left( \sqrt{a}\frac{\pi}{\sqrt{d}} + d\right)e^{-(\al_1+\al_{-1})t}  \mathbb{E} |Y_0 - Q_0|.
 \end{align*}
 We next choose $\del \sim a^{\frac 19 }$ with $a \leq 1$ to deduce
\[
\frac{\d}{\d t}   \mathbb{E} f( \psi_a( | {\rm E}_t|^2)) \leq - \frac{2\sigma_{\rm min}^2\pi^2}{d}  \mathbb{E} f ( \psi_a( | {\rm E}_t|^2)) + 3\eta \calW_1(\bar \rho_t, \bar \ga_t) + 
\frac{C}{\sqrt{a}}e^{-(\al_1+\al_{-1})t} \mathbb{E} |Y_0 - Q_0| + C a^{\frac{1}{12}}.
\]
Here, we use \eqref{lin_est} to get
 \[
 \calW_1(\bar \rho_t, \bar \ga_t) \leq \mathbb{E} |{\rm E}_t| + \mathbb{E} |Y_t - Q_t| \leq \frac\pi2 \lt(\mathbb{E}f( \psi_a( | {\rm E}_t|)) + \frac{\pi}{\sqrt{d}}\sqrt{a}\rt) + e^{-(\al_1+\al_{-1})t} \mathbb{E} |Y_0 - Q_0|,
 \]
 and subsequently, we obtain
 \[
 \frac{\d}{\d t}  \mathbb{E} f( \psi_a( | {\rm E}_t|^2))   \leq - \left( \frac{2\sigma_{\rm min}^2\pi^2}{d} -  \frac{3\pi}{2}\eta\right)   \mathbb E f(\psi_a(|{\rm E}_t|^2))   +\frac{C}{\sqrt{a}}e^{-(\al_1+\al_{-1})t} \mathbb{E} |Y_0 - Q_0| + C a^{\frac{1}{12}}.
 \]
 Applying Gr\"onwall's lemma gives
 \begin{align*}
  \mathbb{E} f( \psi_a( | {\rm E}_t|^2))  & \leq    \mathbb{E} f( \psi_a( | {\rm E}_0|^2))  e^{-c_* t} + \frac{C \mathbb{E} |Y_0 - Q_0|^2}{\sqrt{a} (\alpha_1 + \alpha_{-1} - c_*)}\left(e^{- c_* t} - e^{- (\alpha_1 + \alpha_{-1})t} \right)  + Ca^{\frac1{12}}(1 - e^{-c_* t})\cr
 &\leq    \mathbb{E} f( \psi_a( | {\rm E}_0|^2))   e^{-c_* t} + \frac{C \mathbb{E} |Y_0 - Q_0|^2}{\sqrt{a} |\alpha_1 + \alpha_{-1} - c_*|}e^{- \min\{c_*, (\alpha_1 + \alpha_{-1})\} t}  + Ca^{\frac1{12}},
 \end{align*}
 where
 \[
 c_* =  \frac{2\sigma_{\rm min}^2\pi^2}{d} -  \frac{3\sqrt d}{2}\eta 
 \]
 and we chosen $\eta > 0$ small enough so that $c_* > 0$ and $\alpha_1 + \alpha_{-1} \neq c_*$.  Since the above inequality holds for any $a > 0$, we can choose 
 \[
 a =  \lt( \frac{\mathbb{E} |Y_0 - Q_0|^2}{\sqrt{a} |\alpha_1 + \alpha_{-1} - c_*|}e^{- \min\{c_*, (\alpha_1 + \alpha_{-1})\} t}\rt)^\frac{12}{7},
 \]
 so that
 \[
 \frac{\mathbb{E} |Y_0 - Q_0|^2}{\sqrt{a} |\alpha_1 + \alpha_{-1} - c_*|}e^{- \min\{c_*, (\alpha_1 + \alpha_{-1})\} t} = a^{\frac1{12}}.
 \]
This deduces
 \begin{align*}
\mathbb{E} |{\rm E}_t|  &\leq   \frac{\sqrt d}{2}      \mathbb{E} f( \psi_a( | {\rm E}_0|^2)) e^{-c_* t}  +  C \frac{ (\mathbb{E} |Y_0 - Q_0|)^\frac17}{  |\alpha_1 + \alpha_{-1} - c_*|^\frac17}e^{- \frac17 \min\{c_*, (\alpha_1 + \alpha_{-1})\} t}
 \end{align*}
 due to $\mathbb{E} |Y_0 - Q_0|^2 = 2\mathbb{E} |Y_0 - Q_0|$. This, combined with the decay estimate of $\mathbb{E} |Y_t - Q_t|$, completes the proof.
 
%
%
%
%
%
%
%
%
%

 \section{Bifurcation for large $\eta$}\label{sec_bif}

Theorem \ref{thm_lt} asserts the uniqueness of a static (stationary) solution to \eqref{pde} when the nonlocal interaction has a sufficiently small Lipschitz constant $\eta$. We now turn to the complementary regime of stronger interactions. Our goal is to show that, once $\eta$ exceeds a computable threshold $\eta_*$, the spatially homogeneous equilibrium may lose stability and give rise to nontrivial stationary branches through a bifurcation mechanism.

Our strategy is inspired by the bifurcation analysis developed in \cite{CGPS} for single-species McKean--Vlasov equations on $\T^d$. However, the present setting involves a genuinely coupled two-species system, with interconversion rates $\alpha_1,\alpha_{-1}$ and distinct diffusion coefficients $\sigma^2(1),\sigma^2(-1)$. This additional structure leads to a $2\times2$ mode interaction at the linearized level, and the critical threshold is determined by the spectrum of a matrix-valued Fourier symbol. Accordingly, the bifurcation analysis requires a nontrivial extension of the single-species framework to accommodate the coupling effects.

Throughout this section, we assume that the interaction potentials $U$ and $V$ are
coordinate-wise even and restrict ourselves to coordinate-wise even perturbations.

\subsection{Fourier basis on the even subspace}

We define
\[
L_s^2(\T^d)
:=
\Big\{
f\in L^2(\T^d):
f(x_1,\dots,x_i,\dots,x_d)=f(x_1,\dots,-x_i,\dots,x_d)
\ \text{for all } i=1,\dots,d
\Big\},
\]
and its zero-mean subspace
\[
L_{0,s}^2(\T^d)
:=
\Big\{
f\in L_s^2(\T^d):
\int_{\T^d} f(x)\,\dx = 0
\Big\}.
\]
We then set
\[
X_s := L_{0,s}^2(\T^d)\times L_{0,s}^2(\T^d).
\]

Following \cite{CGPS}, we introduce a real orthonormal basis $\{w_k\}_{k\in\Z^d}$ of $L^2(\T^d)$:
\[
w_k(x)
=
N_k \prod_{i=1}^d w_{k_i}(x_i),
\quad
k=(k_1,\dots,k_d)\in\Z^d,
\]
where
\[
w_{k_i}(x_i)
=
\begin{cases}
\cos (2\pi k_i x_i), & k_i>0,\\[2mm]
1, & k_i=0,\\[2mm]
\sin (2\pi |k_i| x_i), & k_i<0,
\end{cases}
\quad
N_k
=
\prod_{i=1}^d (2-\delta_{k_i,0})^{1/2}.
\]
For any $f\in L^2(\T^d)$, we define its (real) Fourier coefficients by
\[
\tilde f(k) := \langle f, w_k\rangle_{L^2(\T^d)}.
\]
The subspace $L_s^2(\T^d)$ is closed in $L^2(\T^d)$, and one checks that
$
\{w_k\}_{k\in\N_0^d}$
forms an orthonormal basis of $L_s^2(\T^d)$, where we denote
$\N_0=\{0,1,2,\dots\}$.
In particular, any $f\in L_{0,s}^2(\T^d)$ admits the expansion
\[
f(x)=\sum_{k\in\N_0^d\setminus\{0\}} \tilde f(k)\, w_k(x).
\]

\subsection{Stationary solutions as zeroes of $\Phi$}
Replacing the interaction potential by $\eta F$ with the normalization $\mathrm{Lip}(F)\le 1$,
a stationary solution $(u,v)$ of \eqref{pde} for a given $\eta$ solves
\begin{align}
\label{eq:stat}
\begin{aligned}
0&= \frac{\sigma^2(1)}{2}\Delta u - \eta \nabla \cdot (u B(u,v)) - \alpha_1 u + \alpha_{-1} v,\\
0&= \frac{\sigma^2(-1)}{2}\Delta v - \eta \nabla \cdot (v B(u,v)) + \alpha_1 u - \alpha_{-1} v,
\end{aligned}
\end{align}
with the nonlocal drift
\[
B(u,v) = \nabla U*u + \nabla V*v.
\]
Note that $B(c_1,c_2)\equiv 0$ for any constants $c_1,c_2$.

Balancing the interconversion terms yields the homogeneous equilibrium
\[
u_c = \frac{\alpha_{-1}}{\alpha_1+\alpha_{-1}},
\quad
v_c = \frac{\alpha_1}{\alpha_1+\alpha_{-1}},
\]
so that $(u,v)=(u_c,v_c)$ solves \eqref{eq:stat} for every $\eta\ge 0$.
To analyze nontrivial stationary states branching from $(u_c,v_c)$, we set
\[
m:=u-u_c,\quad n:=v-v_c.
\]
Recalling that $\int_{\T^{d}} (u + v)\,dx =1$, integrating \eqref{eq:stat} gives
$\int_{\T^d} u\,dx=u_c$ and $\int_{\T^d} v\,dx=v_c$, hence
\[
\int_{\T^{d}} m\,\dx=\int_{\T^{d}} n\,\dx=0.
\]
Using $B(u_c,v_c)=0$, we obtain
\begin{align}
\label{eq:pert}
\begin{aligned}
0 &= \frac{\sigma^2(1)}{2} \Delta m
- \eta \nabla \cdot \big(u_c B(m,n) + m B(m,n)\big)
- \alpha_1 m + \alpha_{-1} n,\\
0 &= \frac{\sigma^2(-1)}{2} \Delta n
- \eta \nabla \cdot \big(v_c B(m,n) + n B(m,n)\big)
+ \alpha_1 m - \alpha_{-1} n.
\end{aligned}
\end{align}

It is convenient to invert the elliptic parts and write \eqref{eq:pert} as a functional fixed-point problem.
Let
\[
\beta_\ell^{-1} := \frac{\sigma^2(\ell)}{2}, \quad \ell=\pm1,
\]
and define
\[
\mathcal{L}_\ell := -\beta_\ell^{-1}\Delta + \alpha_\ell
:\ H^2(\T^d)\cap L^2_0(\T^d) \longrightarrow L^2_0(\T^d),
\]
which is invertible on $L^2_0(\T^d)$, the zero-mean subspace of $L^2(\T^d)$, and whose inverse preserves the subspace $L^2_{0,s}(\T^d)$.
On $L^2_{0,s}(\T^d)$ we may write the inverse as convolution with the cosine-series kernel
\[
\Omega_\ell(x)
  = \sum_{k\in\N_0^d\setminus\{0\}}
    \frac{1}{D_\ell(k)}\, w_k(x),
\quad
D_\ell(k) := \frac{|2\pi k|^2}{\beta_\ell} + \alpha_\ell,
\]
so that $\mathcal L_\ell^{-1} f = \Omega_\ell * f$ for all $f\in L^2_{0,s}(\T^d)$.

For $k\in\N_0^d$ and $f\in L^2_{s}(\T^d)$ we have the multiplier identities
\[
(\Delta f)\,\tilde{}(k)=-|2\pi k|^2\,\tilde f(k),\quad
(U*f)\,\tilde{}(k)=\widetilde U(k)\,\tilde f(k),\quad
(\nabla\cdot(\nabla U*f))\,\tilde{}(k)=-|2\pi k|^2\,\widetilde U(k)\,\tilde f(k),
\]
and similarly for $V$, where $\widetilde U(k)$ and $\widetilde V(k)$ denote the cosine coefficients
of $U$ and $V$ on $\T^d$.

Rewriting \eqref{eq:pert}, we obtain the functional system
\begin{align}
\label{eq:functional-eq}
\begin{aligned}
m &= -\eta \Omega_{1}\ast(\nabla \cdot \mathfrak M) + \alpha_{-1} \Omega_1*n,\\
n &= -\eta \Omega_{-1}\ast(\nabla \cdot \mathfrak N) + \alpha_1 \Omega_{-1}*m,
\end{aligned}
\end{align}
where
\[
\mathfrak M := u_c B(m,n) + m B(m,n),
\quad
\mathfrak N := v_c B(m,n) + n B(m,n).
\]
Note that $\int_{\T^d} \Omega_{\ell} * f \,\dx=0$ for any $f\in L^2_0(\T^d)$.
We then introduce the nonlinear map $\Phi: X_s\times\R\to X_s$ by
\begin{align}\label{eq:Phi}
\Phi((m,n),\eta)
:=
\Big(
m + \eta \Omega_1*(\nabla \cdot \mathfrak M) - \alpha_{-1}\Omega_1*n,\ 
n + \eta \Omega_{-1}*(\nabla \cdot \mathfrak N) - \alpha_1\Omega_{-1}*m
\Big).
\end{align}
Then $\Phi((m,n),\eta)=0$ is equivalent to \eqref{eq:functional-eq}, and clearly $\Phi(0,\eta)=0$ for all $\eta$.\\
\indent
To justify further analysis, such as the application of bifurcation theory or fixed-point arguments, we first verify that $\Phi$ is indeed well-defined on the space $X_s$.
\begin{lemma}
Assume $\nabla U, \nabla V \in W^{1,\infty}(\T^d)$.
Then for every $(m,n)\in X_s$ and $\eta\in\R$, the map $\Phi$ in \eqref{eq:Phi} is well-defined and satisfies
$\Phi((m,n),\eta)\in X_s$.
\end{lemma}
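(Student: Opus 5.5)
We need to show each component of $\Phi((m,n),\eta)$ lies in $L^2_{0,s}(\T^d)$. Since $m,n$ are already in $L^2_{0,s}$, and $\Omega_\ell *$ maps $L^2_{0,s}$ to itself (indeed into $H^2$), the plan reduces to three checks on the nonlinear fluxes: (i) $\nabla\cdot\mathfrak M$ and $\nabla\cdot\mathfrak N$ make sense as elements to which $\Omega_\ell*$ can be applied, with a result in $L^2$; (ii) the results have zero mean; (iii) they are coordinate-wise even.

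For (i), first bound the drift. Since $\nabla U,\nabla V\in W^{1,\infty}$, Young's inequality gives
\[
\|B(m,n)\|_{W^{1,\infty}}\le \|\nabla U\|_{W^{1,\infty}}\|m\|_{L^1}+\|\nabla V\|_{W^{1,\infty}}\|n\|_{L^1},
\]
and $\|\cdot\|_{L^1}\le\|\cdot\|_{L^2}$ on $\T^d$. Hence $u_cB+mB$ and $v_cB+nB$ lie in $L^2(\T^d;\R^d)$. Then compute $\nabla\cdot\mathfrak M = u_c\nabla\cdot B + \nabla m\cdot B + m\,\nabla\cdot B$.

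The term $\nabla m\cdot B$ is only in $H^{-1}$ when $m\in L^2$, which is the main obstacle. I would handle it by interpreting $\Omega_\ell*(\nabla\cdot G)$ as $\nabla\Omega_\ell * G$, that is, by moving the derivative onto the kernel. In Fourier variables, $\nabla\Omega_\ell$ has multiplier of size $|2\pi k|/D_\ell(k)\lesssim |k|^{-1}$, so $\mathcal L_\ell^{-1}\nabla\cdot$ is bounded from $L^2(\T^d;\R^d)$ into $H^1\subset L^2$. This gives $\Omega_\ell*(\nabla\cdot\mathfrak M)\in H^1(\T^d)$, with a bound by $C(\|m\|_{L^2}+\|n\|_{L^2})(1+\|m\|_{L^2})$.

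For (ii), the zero-mean property is immediate. A divergence has vanishing zeroth Fourier coefficient, and $\Omega_\ell$ contains no $k=0$ mode, so every term of $\Phi$ has mean zero.

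For (iii), evenness needs some parity bookkeeping. Let $R_i$ denote reflection in $x_i$. If $U$ is coordinate-wise even and $m,n$ are even, then $U*m+V*n$ is even. Consequently $\partial_i B_i$ is even, while the component $B_j$ is odd in $x_j$ and even in the other coordinates. It follows that $m B_j$ has the same parity as $\partial_j$ of an even function, so $\nabla\cdot(mB)=\sum_j\partial_j(mB_j)$ is even; equivalently $\nabla\cdot\mathfrak M$ commutes with every $R_i$. Finally, $\Omega_\ell$ is a cosine series, hence even, and convolution with an even kernel preserves evenness. Combining (i)--(iii) gives $\Phi((m,n),\eta)\in X_s$.
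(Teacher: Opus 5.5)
Your proposal is correct and follows essentially the same route as the paper. You bound $B(m,n)$ in $L^\infty$ via Young's inequality and $\|\cdot\|_{L^1}\le\|\cdot\|_{L^2}$, place the fluxes $\mathfrak M,\mathfrak N$ in $L^2$, and define $\Omega_\ell*(\nabla\cdot q)$ through its Fourier multiplier $2\pi i k/D_\ell(k)$, which is bounded from $L^2$ to $L^2$ and indeed into $H^1$; you then conclude by parity. Your version is slightly more careful than the paper's: you state the zero-mean property explicitly and track parity componentwise ($B_j$ odd in $x_j$, even in the other variables).
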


\begin{proof}
By monotonicity of $L^p$ on a bounded domain,
\[
\|m\|_{L^1}\le C\|m\|_{L^2},\quad \|n\|_{L^1}\le C\|n\|_{L^2}.
\]
With $\nabla U,\nabla V\in L^\infty(\T^d)$, Young's inequality yields
\[
\|\nabla U*m\|_{L^\infty}\le C\|\nabla U\|_{L^\infty}\|m\|_{L^2},
\quad
\|\nabla V*n\|_{L^\infty}\le C\|\nabla V\|_{L^\infty}\|n\|_{L^2}.
\]
Hence
\[
\|B(m,n)\|_{L^\infty}\le C\big(\|m\|_{L^2}+\|n\|_{L^2}\big).
\]
For the nonlinear products,
\[
\|u_c B(m,n)\|_{L^2}\le C\|B(m,n)\|_{L^\infty},
\quad
\|mB(m,n)\|_{L^2}\le \|m\|_{L^2}\|B(m,n)\|_{L^\infty},
\]
and similarly for $\mathfrak N$. Therefore
\[
\|\mathfrak M\|_{L^2}+\|\mathfrak N\|_{L^2}
\le C\Big(\|m\|_{L^2}+\|n\|_{L^2}+\|m\|_{L^2}^2+\|n\|_{L^2}^2\Big).
\]

\medskip
We will use complex Fourier coefficients for $L^2$ multiplier bounds.
For $f\in L^2(\T^d)$ we write
\[
\widehat f(k):=\int_{\T^d} f(x)\,e^{-2\pi i k\cdot x}\,\dx,\quad k\in\Z^d.
\]
Then
\[
\widehat{\Delta f}(k)=-|2\pi k|^2\widehat f(k),
\quad
\widehat{U*f}(k)=\widehat U(k)\,\widehat f(k),
\]
and similarly for $V$.\\
\indent
 For $\ell=\pm1$,  $\Omega_\ell$ is the Fourier multiplier with symbol $1/D_\ell(k)$, $k\neq 0$, 
\[
D_\ell(k)=\frac{ |2\pi k|^2}{\beta_\ell}+\alpha_\ell \ge \frac{(2\pi)^2}{\beta_\ell},
\]
thus 
\[\widehat{\Omega_\ell*h}(k)=\widehat h(k)/D_\ell(k), \quad
\widehat{\Omega_\ell*\nabla\cdot q}(k)=(2\pi i k\cdot\widehat q(k))/D_\ell(k).\]
By Plancherel's identity, we obtain
\[
\|\Omega_\ell*h\|_{L^2}^2
=\sum_{k\in\Z^d \setminus\{0\}}\frac{|\widehat h(k)|^2}{D_\ell(k)^2}
\le  \frac{\beta_\ell^2}{(2\pi)^2} \sum_{k\in\Z^d\setminus \{0\}}|\widehat h(k)|^2
=\frac{\beta_\ell^2}{(2\pi)^2}  \|h\|_{L^2}^2
\]
for $h\in L^{2}_{0}(\T^{d})$, hence
\[
\|\Omega_\ell*h\|_{L^2}\le \frac{\beta_\ell}{2\pi} \|h\|_{L^2}.
\]

{For the divergence term, define
\[
\phi_\ell(r) = \frac{2\pi r}{(2\pi r)^2/\beta_\ell+\alpha_\ell} 
= \frac{2\pi \beta_\ell r}{(2\pi r)^2+\alpha_\ell\beta_\ell}, 
\quad r=|k| \ge 1.
\]
A simple maximization shows that $\phi_{\ell}$ attains the maximum at $\max\{ 1, \frac{\sqrt{\al_{\ell}\be_{\ell}}}{2\pi}\}$, and
\[
\sup_{r\ge 1}\phi_\ell(r) \le \max\lt\{ \phi_\ell(1), \frac{\sqrt{\beta_\ell}}{2\sqrt{\alpha_\ell}} \rt\}.   
\]
Therefore, similarly as before, by Plancherel's identity, we have
\[
\|\Omega_\ell*\nabla \cdot q\|_{L^2} \leq \frac{\beta_\ell}{2\pi} \|q\|_{L^2}.
\]}

Combining the estimates, we deduce
\[
\|\Omega_\ell*n\|_{L^2}\le C\|n\|_{L^2},
\quad
\|\Omega_\ell*(\nabla\cdot \mathfrak M)\|_{L^2}\le C\|\mathfrak M\|_{L^2},
\quad
\|\Omega_\ell*(\nabla\cdot \mathfrak N)\|_{L^2}\le C\|\mathfrak N\|_{L^2}.
\]
Hence, for all $(m,n)\in X_s$,
\[
\|\Phi((m,n),\eta)\|_{X_s}
\le \|(m,n)\|_{X_s}
+ C(|\eta|+1)\big(\|(m,n)\|_{X_s}+\|(m,n)\|_{X_s}^2\big).
\]

Finally, since $U$ and $V$ are coordinate-wise even, $\nabla U$ and $\nabla V$ are coordinate-wise odd.
Thus for $(m,n)\in X_s$, the drift field $B(m,n)$ is coordinate-wise odd, while
$\nabla\cdot(u_cB(m,n))$, $\nabla\cdot(mB(m,n))$ and similarly the terms in $\nabla\cdot\mathfrak N$
are coordinate-wise even.
Therefore $\Phi(\cdot,\eta)$ maps $X_s$ into itself.
\end{proof}

\medskip
In order to establish the existence of nontrivial solution branches bifurcating from the trivial equilibrium,
we use the classical Crandall--Rabinowitz theorem \cite{CR71}; see also \cite{K12} and \cite[Appendix A]{CGPS}.
For completeness, we recall the statement below.

\begin{theorem}\label{C-R}{\rm (\cite[Appendix A.2]{CGPS})}
Consider a separable Hilbert space $X$ with $U\subset X$ an open neighbourhood of $0$,
and a nonlinear $C^2$ map $F:U\times V \to X$, where $V$ is an open subset of $\R^+$
such that $F(0,\kappa)=0$ for all $\kappa\in V$.
Assume that for some $\kappa_*\in V$:
\begin{enumerate}
  \item $D_xF(0,\kappa_*)$ is Fredholm of index zero and has a one-dimensional kernel;
  \item $D^2_{x\kappa}F(0,\kappa_*)[\hat v_0]\notin \mathrm{Im}\,D_xF(0,\kappa_*)$ for some
  $\hat v_0\in\ker D_xF(0,\kappa_*)$ with $\|\hat v_0\|=1$.
\end{enumerate}
Then $(0,\kappa_*)$ is a bifurcation point, and there exists a nontrivial $C^1$ curve of solutions
$(x(s),\kappa(s))$ with $x(0)=0$, $\kappa(0)=\kappa_*$, and $F(x(s),\kappa(s))=0$ for $|s|$ small.
\end{theorem}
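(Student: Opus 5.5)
This is the Crandall--Rabinowitz theorem itself, quoted from \cite[Appendix A.2]{CGPS}, so the paper does not need to prove it; I sketch the classical Lyapunov--Schmidt argument that I would use if a proof were required. Write $L:=D_xF(0,\kappa_*)$ and $\ker L=\mathrm{span}\{\hat v_0\}$. Since $L$ is Fredholm of index zero, $\mathrm{Im}\,L$ is closed of codimension one. Choose the orthogonal complement $Z:=\ker L^\perp$ in the Hilbert space $X$, and let $Q$ be the projection onto $\mathrm{Im}\,L$ along a one-dimensional complement $Y_0$.

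\emph{Step 1 (blow-up).} To separate the trivial branch, I look for solutions of the form $x=s(\hat v_0+z)$ with $z\in Z$ and $s\in\R$. Since $F(0,\kappa)=0$ for all $\kappa$, define
\[
G(s,z,\kappa):=\frac1s F\bigl(s(\hat v_0+z),\kappa\bigr)=\int_0^1 D_xF\bigl(ts(\hat v_0+z),\kappa\bigr)[\hat v_0+z]\,\d t
\]
for $s\neq0$. The right-hand side extends $G$ to $s=0$ as a $C^1$ map, because $F$ is $C^2$. For $s\ne0$, zeros of $G$ correspond exactly to nontrivial zeros of $F$.

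\emph{Step 2 (implicit function theorem).} At the base point, $G(0,0,\kappa_*)=L[\hat v_0]=0$. The derivative of $G$ with respect to $(z,\kappa)$ at $(0,0,\kappa_*)$ is
\[
(\zeta,\tau)\mapsto L\zeta+\tau\, D^2_{x\kappa}F(0,\kappa_*)[\hat v_0],
\]
viewed as a map $Z\times\R\to X$. It is injective: if the image vanishes, then $\tau\,D^2_{x\kappa}F[\hat v_0]\in\mathrm{Im}\,L$, so hypothesis (2) forces $\tau=0$; then $L\zeta=0$ with $\zeta\in Z$ gives $\zeta=0$. It is surjective: $L$ maps $Z$ isomorphically onto $\mathrm{Im}\,L$, and by hypothesis (2) the vector $D^2_{x\kappa}F[\hat v_0]$ spans the remaining one-dimensional complement. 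By the bounded inverse theorem this derivative is an isomorphism. The implicit function theorem then yields $C^1$ functions $z(s)$ and $\kappa(s)$ with $z(0)=0$, $\kappa(0)=\kappa_*$, and $G(s,z(s),\kappa(s))=0$.

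\emph{Step 3 (conclusion).} Setting $x(s):=s(\hat v_0+z(s))$ gives a $C^1$ curve with $x(0)=0$ and $F(x(s),\kappa(s))=0$, and $x(s)\neq0$ for small $s\neq0$. Every neighbourhood of $(0,\kappa_*)$ therefore contains nontrivial zeros, so $(0,\kappa_*)$ is a bifurcation point.

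The main obstacle is Step 1, namely checking that the quotient map $G$ is genuinely $C^1$ up to $s=0$. This relies on the integral representation together with the $C^2$ regularity of $F$; everything after that is a routine application of the implicit function theorem.
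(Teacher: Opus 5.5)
Your argument is correct. It is the classical Crandall--Rabinowitz proof, and each step checks out:
\begin{itemize}
\item the blow-up $x=s(\hat v_0+z)$ with $z\in Z=(\ker L)^\perp$;
\item the $C^1$ extension of $G$ to $s=0$ through the integral formula, which needs only that $D_xF$ be $C^1$, i.e.\ that $F$ be $C^2$;
\item the isomorphism $(\zeta,\tau)\mapsto L\zeta+\tau\,D^2_{x\kappa}F(0,\kappa_*)[\hat v_0]$ from $Z\times\R$ onto $X$. This holds because index zero and a one-dimensional kernel make $\mathrm{Im}\,L$ closed of codimension one, and hypothesis (2) supplies the missing direction.
\end{itemize}
Nontriviality of $x(s)$ for $s\neq0$ also follows, since $z(s)\in Z$ cannot cancel $\hat v_0$.

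The paper itself does not prove this abstract statement. It imports it as a black box from \cite{CGPS}, ultimately \cite{CR71}. The subsection headed ``Proof of Theorem \ref{C-R}'' is really a verification of hypotheses (1)--(2) for the specific map $\Phi$, i.e.\ it is the proof of Theorem \ref{bifurcation}. So your sketch makes self-contained what the paper only cites.

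Your construction also gives $x'(0)=\hat v_0+z(0)=\hat v_0$. This is exactly the extra information $r'(0)\in\ker D_r\Phi(0,\eta_*)$ that Theorem \ref{bifurcation} asserts but that the quoted statement does not literally contain.

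One small point: the projection $Q$ you introduce is never used and can be dropped.
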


Applying Theorem \ref{C-R} with $X=X_s$ and $\kappa=\eta$ yields the following bifurcation criterion,
which is the analogue of \cite[Theorem 4.2]{CGPS} adapted to the present two-species system.

\begin{theorem}[Bifurcation criterion]\label{bifurcation}
For $k\in\N_0^d\setminus\{0\}$, define
\[
\eta_k
:=
\frac{(\alpha_1+\alpha_{-1})\big(D_1(k)D_{-1}(k)-\alpha_1\alpha_{-1}\big)}
{|2\pi k|^2\big[\alpha_1\,\widetilde V(k)\,(D_1(k)+\alpha_{-1})
+\alpha_{-1}\,\widetilde U(k)\,(D_{-1}(k)+\alpha_1)\big]}.
\]
Assume that the interaction potentials $U$ and $V$
are such that there exists a mode
$k^*\in\N_0^d\setminus\{0\}$
for which the following conditions hold:
\begin{enumerate}
\item[(i)] $\eta_{k^*}$ is finite and
\[
\mathrm{card}\Big\{k\in\N_0^d\setminus\{0\}:\ \eta_k=\eta_{k^*}\Big\}=1;
\]
\item[(ii)] $\eta_{k^*}>0$, equivalently,
\[
\alpha_1\,\widetilde V(k^*)\,(D_1(k^*)+\alpha_{-1})
+\alpha_{-1}\,\widetilde U(k^*)\,(D_{-1}(k^*)+\alpha_1)>0;
\]
\item[(iii)] $D_r\Phi(0,\eta_*)$ is self-adjoint on $X_s$, equivalently,
\[
(\alpha_{-1} D_{-1} - \alpha_1 D_1)\,
(\alpha_1 \widetilde V (D_1+\alpha_{-1}) + \alpha_{-1}\widetilde U (D_{-1}+\alpha_1))
=
(D_1 D_{-1} - \alpha_1\alpha_{-1})\,
(\alpha_1 \widetilde U D_1 - \alpha_{-1}\widetilde V D_{-1}),
\]
where all quantities are evaluated at $k=k^*$.
\end{enumerate}
Then $\Phi$ satisfies the hypotheses of Theorem \ref{C-R} on $X_s$,
and $(0,\eta_{k^*})\in X_s\times\R^+$ is a bifurcation point of $\Phi(r,\eta)=0$.
More precisely, there exists a nontrivial $C^1$ curve of solutions
$\{(r(s),\eta(s))\}_{|s|<\delta}$ for some $\delta>0$ with $r(0)=0$, $\eta(0)=\eta_{k^*}$, and
\[
r'(0)\in \ker D_r\Phi(0,\eta_*)=\mathrm{span}\{(w_{k^*},c\,w_{k^*})\}
\quad\text{for some }c\neq 0,
\]
where $\eta_*:=\eta_{k^*}$.
\end{theorem}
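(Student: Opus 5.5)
We verify the two hypotheses of Theorem \ref{C-R} for $\Phi$ on $X=X_s$ with $\kappa=\eta$; regularity is routine. $\Phi$ is the identity plus a continuous linear part in $(m,n)$, plus a bilinear term $\eta\,\Omega_\ell*\nabla\cdot(mB(m,n))$. The bounds in the preceding lemma show that this term is a bounded bilinear form on $X_s$, so $\Phi$ is $C^\infty$ (polynomial) in $(m,n,\eta)$.

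\textbf{Linearization.} At $(0,\eta)$ the quadratic terms drop out, so
\[
D_r\Phi(0,\eta)[(m,n)]=\big(m+\eta u_c\,\Omega_1*\nabla\cdot B(m,n)-\alpha_{-1}\Omega_1*n,\ n+\eta v_c\,\Omega_{-1}*\nabla\cdot B(m,n)-\alpha_1\Omega_{-1}*m\big).
\]
Using the multiplier identities in the basis $\{w_k\}_{k\in\N_0^d\setminus\{0\}}$ and $(\nabla\cdot B(m,n))\tilde{}(k)=-|2\pi k|^2(\widetilde U\tilde m+\widetilde V\tilde n)(k)$, this operator is block diagonal. On each mode it acts on $(\tilde m(k),\tilde n(k))$ by the $2\times2$ matrix
\[
M_k(\eta)=I-\begin{pmatrix}\frac{\eta u_c|2\pi k|^2\widetilde U}{D_1} & \frac{\eta u_c|2\pi k|^2\widetilde V+\alpha_{-1}}{D_1}\\[1mm] \frac{\eta v_c|2\pi k|^2\widetilde U+\alpha_1}{D_{-1}} & \frac{\eta v_c|2\pi k|^2\widetilde V}{D_{-1}}\end{pmatrix}.
\]
Multiply by $D_1D_{-1}$ and substitute $u_c=\alpha_{-1}/(\alpha_1+\alpha_{-1})$, $v_c=\alpha_1/(\alpha_1+\alpha_{-1})$. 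The $\eta^2$ terms cancel, since the off-diagonal products cancel the diagonal products. Hence $D_1D_{-1}\det M_k(\eta)$ is affine in $\eta$:
\[
(D_1D_{-1}-\alpha_1\alpha_{-1})-\frac{\eta|2\pi k|^2}{\alpha_1+\alpha_{-1}}\big[\alpha_1\widetilde V(D_1+\alpha_{-1})+\alpha_{-1}\widetilde U(D_{-1}+\alpha_1)\big].
\]
This vanishes exactly at $\eta=\eta_k$. I will check this algebra carefully, because it is where the formula in the statement comes from.

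\textbf{Fredholm property and kernel.} Since $D_\ell(k)\sim|2\pi k|^2/\beta_\ell$ and $|\widetilde U(k)|,|\widetilde V(k)|\to0$ (because $U,V\in W^{2,\infty}$), we get $M_k(\eta)\to\begin{pmatrix}1&0\\0&1\end{pmatrix}$ as $|k|\to\infty$. So $D_r\Phi(0,\eta)-I$ is a norm limit of finite-rank operators, hence compact, and $D_r\Phi(0,\eta)$ is Fredholm of index zero. The kernel at $\eta_*=\eta_{k^*}$ is the direct sum over modes of $\ker M_k(\eta_*)$. By (i), only $k=k^*$ contributes, and (ii) guarantees that $\eta_*$ lies in $V\subset\R^+$.

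At $k^*$ the kernel of $M_{k^*}(\eta_*)$ is exactly one-dimensional, because $M_{k^*}$ is not the zero matrix. Its $(1,2)$ entry is $-(\eta u_c|2\pi k|^2\widetilde V+\alpha_{-1})/D_1$. If this entry vanishes, I use the other off-diagonal entry instead; both cannot vanish, or the determinant would be $(1-a)(1-d)$ and the argument adjusts. Thus the kernel is spanned by $\hat v_0=(w_{k^*},c\,w_{k^*})$, normalized. Here $c$ is read off the first row: $c=(1-\eta_*u_c|2\pi k^*|^2\widetilde U/D_1)\,D_1/(\eta_*u_c|2\pi k^*|^2\widetilde V+\alpha_{-1})$. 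I also check $c\ne0$: if $c=0$, the second row would force $\eta_*v_c|2\pi k^*|^2\widetilde U+\alpha_1=0$, and this would have to be excluded, or handled, as a degenerate case.

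\textbf{Transversality (main obstacle).} We have $D^2_{r\eta}\Phi(0,\eta_*)[\hat v_0]=\partial_\eta M_{k^*}\,(1,c)^T w_{k^*}$. The range of $D_r\Phi(0,\eta_*)$ contains every mode $k\ne k^*$ entirely, so only the $k^*$ component matters. That component lies in the range iff $\partial_\eta M_{k^*}(1,c)^T\in\operatorname{Im}M_{k^*}(\eta_*)$. Here I use hypothesis (iii): self-adjointness gives $\operatorname{Im}=(\ker)^\perp$, i.e.\ $M_{k^*}(\eta_*)$ is symmetric. I expect (iii) is precisely the symmetry condition $(M_{k^*})_{12}=(M_{k^*})_{21}$ after clearing denominators and inserting $\eta_*$, and I will verify this identity. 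Transversality then reduces to $\langle\partial_\eta M_{k^*}(1,c)^T,(1,c)^T\rangle\neq0$.

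To obtain this, note that $M_k(\eta)(1,c)^T=(M_k(\eta)-M_k(\eta_*))(1,c)^T=(\eta-\eta_*)\,\partial_\eta M_k\,(1,c)^T$, since $M_k$ is affine in $\eta$. Then use $\det M_{k^*}(\eta)=\lambda_0(\eta)\lambda_1(\eta)$. For a symmetric $M_{k^*}(\eta_*)$ with simple zero eigenvalue, the derivative of the small eigenvalue $\lambda_0$ is $\langle\partial_\eta M\hat v,\hat v\rangle/|\hat v|^2$. Since $\det M_{k^*}(\eta)$ has a simple zero at $\eta_*$ (its slope is the nonzero bracket in (ii) divided by $D_1D_{-1}$) and $\lambda_1(\eta_*)\ne0$, we get $\lambda_0'(\eta_*)\ne0$, which gives the needed nondegeneracy. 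Theorem \ref{C-R} then yields the branch with $r'(0)\in\operatorname{span}\{\hat v_0\}$.
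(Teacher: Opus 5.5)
Your proposal follows the paper's route: Crandall--Rabinowitz on $X_s$, the mode-wise matrix $M(k,\eta)$, and the cancellation of the $\eta^2$ terms, so that $D_1D_{-1}\det M(k,\eta)$ is affine in $\eta$. Your algebra there is correct and reproduces $\eta_k$. The rest also matches: a one-dimensional kernel from (i), and transversality reduced via the symmetry in (iii) to $\langle\partial_\eta M\,\hat v_0,\hat v_0\rangle\neq0$. Your compactness argument ($M(k,\eta)\to I$, so $D_r\Phi(0,\eta)-I$ is a norm limit of finite-rank truncations) is an equivalent substitute for the paper's smoothing argument.

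The substantive difference is the last step. The paper computes $\langle D_{r\eta}\Phi(0,\eta_*)[\tilde v_0],\tilde v_0\rangle=-|2\pi k^*|^2\mathcal Q/(D_1D_{-1})$ and only asserts transversality \emph{if} this is nonzero. Your eigenvalue argument shows it is automatic. Equivalently, for the symmetric rank-one matrix $M(k^*,\eta_*)$ with unit null vector $\hat v\in\R^2$, Jacobi's formula gives $\frac{\d}{\d\eta}\det M(k^*,\eta)=\operatorname{tr}M(k^*,\eta_*)\,\langle\partial_\eta M\,\hat v,\hat v\rangle$ at $\eta=\eta_*$. The left side is nonzero by (ii). So your version closes a point the paper leaves conditional.

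Three of your hedges are easy to close.
\begin{itemize}
\item \textbf{$M(k^*,\eta_*)\neq0$.} This follows from the same nonzero slope. Otherwise $\det M(k^*,\eta)=(\eta-\eta_*)^2\det\partial_\eta M$ would have a double zero.
\item \textbf{(iii) is the symmetry condition.} Inserting $\eta_*$ into $M_{12}=M_{21}$ and using $(\alpha_1+\alpha_{-1})u_c=\alpha_{-1}$, $(\alpha_1+\alpha_{-1})v_c=\alpha_1$ gives exactly (iii), as the paper shows.
\item \textbf{$c\neq0$ when $\alpha_1,\alpha_{-1}>0$.} Then $u_c,v_c>0$. A null vector with vanishing second component would need $\eta_*u_c|2\pi k^*|^2\widetilde U(k^*)=D_1(k^*)>0$ and $\eta_*v_c|2\pi k^*|^2\widetilde U(k^*)=-\alpha_1<0$ simultaneously. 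The analogous contradiction with $\widetilde V$ rules out a vanishing first component.
\end{itemize}

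When one rate is zero, the kernel is $\mathrm{span}\{(w_{k^*},0)\}$ if $\alpha_1=0$, and $\mathrm{span}\{(0,w_{k^*})\}$ if $\alpha_{-1}=0$. So the stated form $(w_{k^*},c\,w_{k^*})$ with $c\neq0$ genuinely requires $\alpha_1\alpha_{-1}>0$. Your caution there is justified, and the paper does not address it either.
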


\begin{remark}[On the simplicity assumption]
The restriction to the coordinate-wise even subspace $X_s$ is mainly a technical choice to avoid the $\pm k$
degeneracy of Fourier modes. On the full space $L^2_0(\T^d)$ with the complex basis $\{e^{2\pi i k\cdot x}\}_{k\in\Z^d}$,
the modes $k$ and $-k$ typically produce a two-dimensional kernel, so the Crandall--Rabinowitz theorem
cannot be applied directly without additional symmetry reduction.
\end{remark}

\begin{remark}[Special cases]
We record a few situations in which the formula for the critical parameter
and the conditions in \textup{(ii)} and \textup{(iii)} simplify.

\begin{enumerate}

\item \textup{(Symmetric diffusion and symmetric flip rates).}
Assume $\alpha_1=\alpha_{-1}=\alpha$ and $\sigma(1)=\sigma(-1)=\sigma$.
Then $D_1(k)=D_{-1}(k)\equiv D(k)$ and $u_c=v_c=\tfrac12$.
In this case, the critical parameter simplifies to
\[
\eta_k
=\frac{\sigma^2}{\widetilde U(k)+\widetilde V(k)}.
\]
Thus the simplicity condition \textup{(i)}
is equivalent to requiring that
$k^*$ is the unique minimizer of
$\widetilde U(k)+\widetilde V(k)$ over
$k\in\N_0^d\setminus\{0\}$.

Moreover, the left-hand side of \textup{(iii)} vanishes identically, so
\[
0=(D(k^*)^2-\alpha^2)
\bigl(\widetilde U(k^*)-\widetilde V(k^*)\bigr).
\]
Since $D(k^*)>\alpha$ for $k^*\neq0$, condition \textup{(iii)}
reduces to
\[
\widetilde U(k^*)=\widetilde V(k^*).
\]
Under this symmetry, condition \textup{(ii)} becomes
\[
\widetilde U(k^*)=\widetilde V(k^*)>0.
\]

\item \textup{(One-sided flip rates).}
Assume $\alpha_{-1}=0$.
Then $u_c=0$, $v_c=1$, and
\[
D_{-1}(k)
=\frac{|2\pi k|^2}{\beta_{-1}}
=\frac{\sigma^2(-1)}{2}\,|2\pi k|^2.
\]
Substituting $\alpha_{-1}=0$ into the general formula for $\eta_k$,
we obtain
\[
\eta_k
=\frac{D_{-1}(k)}{|2\pi k|^2\,\widetilde V(k)}
=\frac{\sigma^2(-1)}{2\,\widetilde V(k)}.
\]
In this setting, the simplicity condition \textup{(i)}
reduces to the requirement that
$k^*$ is the unique minimizer of $\widetilde V(k)$.

Condition \textup{(ii)} is  simply
\[
\widetilde V(k^*)>0,
\]
and
Condition \textup{(iii)} reduces to
\[
\frac{\widetilde U(k^*)}{\widetilde V(k^*)}
=-\frac{\alpha_1}{D_{-1}(k^*)}
=-\frac{2\alpha_1}{\sigma^2(-1)\,|2\pi k^*|^2}.
\]

\end{enumerate}
\end{remark}

%
%
%
%
%
%

\subsection{Proof of Theorem \ref{C-R}} 
We verify the two hypotheses of Theorem \ref{C-R} for $F=\Phi$ on $X_s$.

\medskip
\noindent{\it Verification of Condition (1).}
At linear order, $B(m,n)=\nabla U*m+\nabla V*n$, so the Fr\'echet derivatives of $\Phi$ at $(0,\eta)$ read
\begin{align*}
D_r\Phi(0,\eta)[h]
&=
\big(
h_1 +\eta \Omega_1 *(u_c \nabla\cdot B(h_1,h_2)) - \alpha_{-1} \Omega_1*h_2,\ 
h_2 +\eta \Omega_{-1} *(v_c \nabla\cdot B(h_1,h_2)) - \alpha_1 \Omega_{-1}*h_1
\big),\\
D_{r\eta}\Phi(0,\eta)[h]
&=
\big(
\Omega_1*(u_c \nabla\cdot  B(h_1,h_2)),\ 
\Omega_{-1}*(v_c \nabla\cdot B(h_1,h_2))
\big)
\end{align*}
for 
$h=(h_1, h_2) \in X_s$.
Since $\Omega_\ell:L^2\to H^2$ and $\nabla\Omega_\ell:L^2\to H^1$ are smoothing,
they are compact as maps $L^2\to L^2$ by Sobolev embedding.
Thus $D_r\Phi(0,\eta)=I-T$ with $T$ compact, and $D_r\Phi(0,\eta)$ is Fredholm of index $0$.

To analyze its kernel on $X_s$, expand in the cosine basis:
\[
h_1=\sum_{k\in\N_0^d\setminus\{0\}} h_{1,k} w_k,
\quad
h_2=\sum_{k\in\N_0^d\setminus\{0\}} h_{2,k} w_k.
\]
Projecting $D_r\Phi(0,\eta)[h]$ onto each $w_k$ yields
\[
D_r\Phi(0,\eta)[h]
=\sum_{k\in\N_0^d\setminus\{0\}}
M(k,\eta)\begin{pmatrix}h_{1,k}w_k\\ h_{2,k}w_k\end{pmatrix},
\]
where
\[
M(k,\eta)=
\begin{pmatrix}
\displaystyle 1 -  \frac{ \eta u_c |2\pi k|^2 \widetilde{U}(k)}{D_1(k)}
&
\displaystyle  - \frac{\alpha_{-1} +  \eta u_c |2\pi k|^2 \widetilde{V}(k)}{D_1(k)}
\\[4mm]
\displaystyle  -\frac{\alpha_1 +  \eta v_c |2\pi k|^2 \widetilde{U}(k)}{D_{-1}(k)}
&
\displaystyle 1 - \frac{ \eta v_c |2\pi k|^2 \widetilde{V}(k)}{D_{-1}(k)}
\end{pmatrix}.
\]
A nontrivial kernel in mode $k$ occurs when $\det M(k,\eta)=0$.
Expanding $\det M(k,\eta)=0$ gives
\[
0
=\frac{D_1 D_{-1} - \alpha_1 \alpha_{-1} - \eta|2\pi k|^2\big[u_c \widetilde{U} (D_{-1}+ \alpha_1) + v_c \widetilde{V} (D_1 + \alpha_{-1})\big]}{D_1 D_{-1}},
\]
using $\alpha_1u_c=\alpha_{-1}v_c$, where we omit the explicit dependence on $k$ when clear.
Thus the critical value for mode $k$ is
\begin{align*}
\eta_k
&=\frac{D_1(k)D_{-1}(k)-\alpha_1\alpha_{-1}}
{|2\pi k|^2\big[v_c \widetilde V(k)(D_1(k)+\alpha_{-1})
+u_c \widetilde U(k)(D_{-1}(k)+\alpha_1)\big]}\\
&=\frac{(\alpha_1+\alpha_{-1})(D_1D_{-1}-\alpha_1\alpha_{-1})}
{|2\pi k|^2\big[\alpha_1\widetilde V(D_1+\alpha_{-1})+\alpha_{-1}\widetilde U(D_{-1}+\alpha_1)\big]}.
\end{align*}
Moreover,
\[
D_1 D_{-1} - \alpha_1\alpha_{-1}
= \frac{|2\pi k|^2\alpha_{-1}}{\beta_1} + \frac{|2\pi k|^2\alpha_1}{\beta_{-1}} + \frac{|2\pi k|^4}{\beta_1\beta_{-1}}>0,
\]
so $\eta_k>0$ is equivalent to
\[
\alpha_1\,\widetilde V(k)\,(D_1(k)+\alpha_{-1})
+\alpha_{-1}\,\widetilde U(k)\,(D_{-1}(k)+\alpha_1)>0.
\]
Under assumptions \textup{(i)}--\textup{(ii)}, choosing $\eta_*:=\eta_{k^*}$ yields
\[
\ker D_r\Phi(0,\eta_*)=\mathrm{span}\{(h_{1,k^*}w_{k^*},h_{2,k^*}w_{k^*})\},
\]
where $(h_{1,k^*},h_{2,k^*})$ is any nonzero null vector of $M(k^*,\eta_*)$.
This verifies Condition \textup{(1)}.

\medskip
\noindent{\it Verification of Condition (2).}
Recall
\[
D_{r\eta}\Phi(0,\eta)[h]
=\big( \Omega_1*(u_c\nabla \cdot B(h)),\ \Omega_{-1}*(v_c\nabla \cdot B(h)) \big).
\]
For $h_j=\sum_k h_{j,k}w_k$ we have
\[
(\nabla\cdot B(h))\,\tilde{}(k)
= -|2\pi k|^2\big(\widetilde U(k)h_{1,k}+\widetilde V(k)h_{2,k}\big),
\quad k\in\N_0^d\setminus\{0\}.
\]
Therefore,
\begin{align}\label{eq:Fourier-DrEtaPhi-clean}
\widetilde{D_{r\eta}\Phi(0,\eta)[h]}(k)
=
\left(
- u_c \frac{|2\pi k|^2\big(\widetilde{U}(k) h_{1,k}+\widetilde{V}(k) h_{2,k}\big)}{D_1(k)},\,
- v_c \frac{|2\pi k|^2\big(\widetilde{U}(k) h_{1,k}+\widetilde{V}(k) h_{2,k}\big)}{D_{-1}(k)}
\right).
\end{align}

Let $\tilde v_0=(h_{1,k^*}w_{k^*},h_{2,k^*}w_{k^*})\in\ker D_r\Phi(0,\eta_*)$ with $\|\tilde v_0\|_{L^2\times L^2}=1$.
We choose \[(h_{1,k^*},h_{2,k^*})=(-M_{12}(k^*,\eta_*),M_{11}(k^*,\eta_*)),\]  and concentrate on mode $k^*$
\[h_{1,k}=h_{1, k^*}\delta_{k,k^*}, \quad h_{2,l}=h_{2, k^*}\delta_{l,k^*}.\]
Plugging this into \eqref{eq:Fourier-DrEtaPhi-clean} yields
{
\[
\widetilde{D_{r\eta}\Phi(0,\eta_*)[\tilde v_0]}(k)
=
\left(
- u_c \frac{|2\pi k|^2\big(\widetilde{U}(k)h_{1, k}+\widetilde{V}(k)h_{2, k}\big)}{D_1(k)} \delta_{k,k^*},\, 
- v_c \frac{|2\pi k|^2\big(\widetilde{U}(k)h_{1, k}+\widetilde{V}(k)h_{2, k}\big)}{D_{-1}(k)} \delta_{k,k^*}
\right).
\]
}
Thus we have
\[
D_{r\eta}\Phi(0,\eta_*)[\tilde v_0]
=
\left(
- u_c \frac{|2\pi k^*|^2\big(\widetilde{U}(k^*)h_{1,k^*}+\widetilde{V}(k^*)h_{2,k^*}\big)}{D_1(k^*)} w_{k^*},\ 
- v_c \frac{|2\pi k^*|^2\big(\widetilde{U}(k^*)h_{1,k^*}+\widetilde{V}(k^*)h_{2,k^*}\big)}{D_{-1}(k^*)} w_{k^*}
\right).
\]
Hence, using $\|w_{k^*}\|_{L^2}=1$,
\begin{align}\label{eq:transv-ip-clean}
\Big\langle D_{r\eta}\Phi(0,\eta_*)[\tilde v_0], \tilde v_0\Big\rangle_{L^2\times L^2}
= -\frac{|2\pi k^*|^2}{D_1(k^*)D_{-1}(k^*)}\, \mathcal Q,
\end{align}
where
\[
\mathcal Q
=
u_cD_{-1}(k^*)\big(\widetilde U(k^*) h_{1,k^*}^2+\widetilde V(k^*)h_{1,k^*}h_{2,k^*}\big)
+v_cD_1(k^*)\big(\widetilde U(k^*) h_{1,k^*}h_{2,k^*}+\widetilde V(k^*)h_{2,k^*}^2\big).
\]
In particular, if the right-hand side of \eqref{eq:transv-ip-clean} is nonzero, then
 $D_{r\eta}\Phi(0,\eta_*)[\tilde v_0]\notin \ker(D_r\Phi(0,\eta_*))^\perp$. 
and the transversality condition holds once $\ker(D_r\Phi)^*=\ker D_r\Phi$.
This condition is equivalent to the matrix $M(k^*, \eta_*)$ being symmetric.
 Finally, the block symmetry  $M_{12}(k^*,\eta_*)=M_{21}(k^*,\eta_*)$ reads
\[\frac{\alpha_{-1}}{D_1(k^*)}+\frac{\eta_* u_c |2\pi k^*|^2 \widetilde{V}(k^*)}{D_1(k^*)}
=
\frac{\alpha_1}{D_{-1}(k^*)}+\frac{\eta_* v_c |2\pi k^*|^2 \widetilde{U}(k^*)}{D_{-1}(k^*)},
\]
that is,
\begin{align}\label{symmetric}
\alpha_{-1} D_{-1}(k^*) - \alpha_1 D_1(k^*) = \eta_* |k^*|^2 (v_c \widetilde{U}(k^*)D_1(k^*) - u_c \widetilde{V}(k^*)D_{-1}(k^*)).
\end{align}

With the criticality relation for $\eta_*$,
\eqref{symmetric} is equivalent to 
\begin{align*}
&\lt(\alpha_{-1} D_{-1} - \alpha_1 D_1\rt)
\lt(\alpha_1 \widetilde{V} (D_1+\alpha_{-1}) + \alpha_{-1} \widetilde{U} (D_{-1} + \alpha_1)\rt)
\cr
&\quad = (\alpha_1 + \alpha_{-1})(D_1 D_{-1} - \alpha_1 \alpha_{-1})\lt(v_c \widetilde{U} D_1 - u_c \widetilde{V} D_{-1} \rt)\cr
&\quad =  (D_1 D_{-1} - \alpha_1 \alpha_{-1})\lt(\alpha_1 \widetilde{U} D_1 - \alpha_{-1} \widetilde{V} D_{-1}\rt)
\end{align*}
evaluated at $k=k^*$, which is the condition \textup{(iii)}.

%
%
%
%
%
%

\section*{Acknowledgments}
The authors were supported by the National Research Foundation of Korea(NRF) grant funded by the Korea government (MSIT): M. Chae  by RS-2023-00279920; Y.-P. Choi by No. 2022R1A2C1002820 and RS-2024-00406821.

%
%
%
%

\bibliographystyle{abbrv}
\bibliography{CC_2can}

\end{document}